\theoremstyle{plain}
\newtheorem{theorem}{Theorem}[section]
\newtheorem{corollary}[theorem]{Corollary}
\newtheorem{lemma}[theorem]{Lemma}
\newtheorem{openproblem}[theorem]{Open problem}
\newtheorem{proposition}[theorem]{Proposition}
\theoremstyle{remark}
\newtheorem{remark}[theorem]{Remark}
\newtheorem{example}[theorem]{Example}
\numberwithin{equation}{section}
\newcommand{\ind}{1\!\kern-1pt \mathrm{I}}
\newcommand{\rsto}{]\!\kern-1.8pt ]}
\newcommand{\lsto}{[\!\kern-1.7pt [}
\numberwithin{equation}{section}
\newcommand{\C}{\mathbb{C}}
\newcommand{\E}{\mathbb{E}}
\newcommand{\N}{\mathbb{N}}
\renewcommand{\P}{\mathbb{P}}
\newcommand{\R}{\mathbb{R}}
\newcommand{\cA}{\mathcal{A}}
\newcommand{\cB}{\mathcal{B}}
\newcommand{\cF}{\mathcal{F}}
\newcommand{\cL}{\mathcal{L}}
\newcommand{\cQ}{\mathcal{Q}}
\newcommand{\Id}{\operatorname{Id}}
\DeclareMathOperator{\sign}{sign}
\newcommand{\id}{\operatorname{I}}
\newcommand{\tr}{\mathop{\mathsf{Tr}}}
\newcommand{\e}{\operatorname{e}}
\renewcommand{\Re}{\mathrm{Re}}
\renewcommand{\Im}{\mathrm{Im}}
\renewcommand{\rho}{\varrho}
\newcommand{\eps}{\varepsilon}
\renewcommand{\phi}{\varphi}
\DeclareFontFamily{OMX}{MnSymbolE}{}
\DeclareSymbolFont{MnLargeSymbols}{OMX}{MnSymbolE}{m}{n}
\DeclareFontShape{OMX}{MnSymbolE}{m}{n}{
    <-6>  MnSymbolE5
   <6-7>  MnSymbolE6
   <7-8>  MnSymbolE7
   <8-9>  MnSymbolE8
   <9-10> MnSymbolE9
  <10-12> MnSymbolE10
  <12->   MnSymbolE12
}{}
\DeclareFontShape{OMX}{MnSymbolE}{b}{n}{
    <-6>  MnSymbolE-Bold5
   <6-7>  MnSymbolE-Bold6
   <7-8>  MnSymbolE-Bold7
   <8-9>  MnSymbolE-Bold8
   <9-10> MnSymbolE-Bold9
  <10-12> MnSymbolE-Bold10
  <12->   MnSymbolE-Bold12
}{}
\let\llangle\@undefined
\let\rrangle\@undefined
\DeclareMathDelimiter{\llangle}{\mathopen}%
                     {MnLargeSymbols}{'164}{MnLargeSymbols}{'164}
\DeclareMathDelimiter{\rrangle}{\mathclose}%
                     {MnLargeSymbols}{'171}{MnLargeSymbols}{'171}
\newcommand{\sgc}{\begin{color}{blue}}
\newcommand{\cgs}{\end{color}}
\newcommand{\agc}{\begin{color}{green}}
\newcommand{\ags}{\end{color}}
\begin{document}
\title[]
{Infinite-dimensional Wishart processes}
\begin{abstract}
We introduce and analyse infinite dimensional Wishart processes taking values in the cone $S^+_1(H)$ of positive self-adjoint trace class operators on a separable real Hilbert space $H$. Our main result
gives necessary and sufficient conditions for their existence, 
showing that these processes are necessarily of fixed finite rank almost surely, 
 but they are not confined to a finite-dimensional subspace of $S^+_1(H)$. 
 By providing explicit solutions to operator valued Riccati equations, we prove that their Fourier-Laplace transform is exponentially affine in the initial value.
 As a corollary, we obtain uniqueness in law as well as the Markov property.
 We actually show the explicit form of the Fourier-Laplace transform for a wide parameter regime, thereby also extending what is known in the finite-dimensional setting.
 Finally, under minor conditions on the parameters we prove the Feller property with respect to a slight refinement of the weak-$*$-topology on $S_1^+(H)$. Applications of our results range from tractable infinite-dimensional covariance modelling to the analysis of the limit spectrum of large random matrices.
\end{abstract}
\thanks{
Sonja Cox was partially supported by the NWO grant VI.Vidi.213.070. In addition, she would like to thank Jan Brandts and Lenny Taelman for discussions on how to prove Lemma~\ref{lem:approx_ONS2}.
Christa Cuchiero gratefully acknowledges financial support through the grant Y 1235 of the FWF START-program.}
\keywords{Wishart process, trace class operators, affine process, Feller process, infinite-dimensional covariance model}
\subjclass[2000]{60J25, 46N30, 60H10}

\author{Sonja Cox, Christa Cuchiero, Asma Kheder }
\maketitle

\section{Introduction}
The goal of this paper is to introduce and analyse infinite-dimensional Wishart processes. An infinite-dimensional Wishart processes is a stochastic process $X= (X_t)_{t\geq 0}$ taking values in $S^+_1(H)$, the cone of positive self-adjoint trace class operators on a separable real Hilbert space $H$, and satisfying (in some sense) the following  stochastic differential equation:
\begin{equation}\label{eq:Wishart-formal}
dX_t= (\alpha Q + X_t A + A^* X_t) \,dt +\sqrt{X_t} \,dW_t \sqrt{Q} + \sqrt{Q} \,dW_t^* \sqrt{X_t},\, t\geq 0, \, X_0=x_0.
\end{equation}
Here $\alpha \in \R$, $A\colon D(A)\subset H \rightarrow H$ is the generator of a $C_0$-semigroup, $x_0$ and $Q$ are a positive self-adjoint bounded operators, and $(W_t)_{t\geq 0}$ is an $L_2(H)$-cylindrical Brownian motion (where $L_2(H)$ is the space of Hilbert Schmidt operators on $H$). 

Finite-dimensional Wishart processes, i.e., processes taking values in $S^+(\mathbb{R}^n)$, the cone of positive semidefinite $n \times n$ matrices, have been studied thoroughly: in \cite{bru1989diffusions, Bru:1991} the existence of finite-dimensional Wishart processes was established under certain conditions on the parameters, and stochastic differential equations were derived for the eigenvalues and eigenvectors. It was soon recognised that these finite-dimensional Wishart processes are \emph{affine}, i.e., Markov processes whose Laplace transform depends in an exponentially affine way on the initial value. A full characterisation of all $S^+(\R^{n})$-valued affine processes was presented in \cite{CFMT:11}: these extend the classical Wishart processes by allowing state-dependent jumps and a  more general drift, while the diffusion structure is analogous to  \eqref{eq:Wishart-formal}.
In \cite{graczyk2018characterization}, Wishart processes 
with starting values on  lower rank submanifolds
of $S^+(\mathbb{R}^n)$ were characterised under non-degeneracy conditions on the diffusion matrix, showing that there is an interplay between the rank of the initial values and the constant drift part; see \cite[Theorem 1.3]{graczyk2018characterization} and also \cite{M:19,letac2018laplace,letac2008noncentral}.\par 

On the one hand the interest for Wishart processes is clearly motivated by such intriguing mathematical properties, on the other hand they are highly popular in applications due to their suitability as tractable stochastic covariance models. Indeed, one important application of finite-dimensional Wishart processes is  multivariate asset price modelling with stochastic covariances, as e.g. in~\cite{buraschi2010correlation,
da2011hedging,
da2008multifactor, 
FonescaEtAl:2007, 
gourieroux2010derivative,  
grasselli2008solvable,
HarveyEtAl:1994,
leippold2008asset, PhilipovGlickman:2006} and the references therein. Note however that models e.g.\ for bond and commodity markets under the Heath–Jarrow–Morton–Musiela (HJMM) paradigm call for \emph{infinite-dimensional stochastic covariance models} (see e.g.,~\cite{ benth2014representation,benth2021barndorff, Benth:2018,  carmona2007interest,filipovic2001consistency}). 
\par 
The most tractable class are again infinite-dimensional \emph{affine} stochastic covariance processes of which some instances have already been considered in the literature: indeed, a rank-1 $S_1^+(H)$-valued process $X$ is constructed in \cite{Benth:2018} by taking $X_t = Y_t\otimes Y_t$, $t\geq 0$, where $(Y_t)_{t\geq 0}$ is an $H$-valued Ornstein-Uhlenbeck process. This turns out to be a special case of an infinite-dimensional Wishart process, see Remark~\ref{rem:OU_alternative} below. In addition, affine pure-jump covariance processes with values in the cone of positive Hilbert-Schmidt operators were introduced in~\cite{cox2022affine} and applied to stochastic volatility modelling in~\cite{cox2022infinite}.

The reason the process introduced in~\cite{cox2022affine} is of pure-jump type is that it was designed to potentially be of infinite rank, whereas there are strong indications from the finite-dimensional theory that in the presence of a non-degenerated diffusion part, i.e.,~\emph{when $Q$ in~\eqref{eq:Wishart-formal} is of infinite rank, then an infinite-dimensional Wishart process is necessarily of finite rank almost everywhere}. To explain this statement, let us return for a moment to the finite-dimensional setting: a Wishart process $X$ taking values in $S^+(\R^n)$ is a process satisfying 
\begin{equation}
dX_t = (\alpha Q + AX_t + X_t A^*) \,dt + \sqrt{X_t}dW_t \sqrt{Q} + \sqrt{Q}\,dW_t^* \sqrt{X_t},\quad t\geq 0, 
\end{equation}
where $A\in \R^{n\times n}$, $Q\in S^{+}(\R^n)$, and $W$ is a standard $\R^{n\times n}$-valued Brownian motion. It is well-known (see, e.g.,~\cite{Bru:1991,CFMT:11,graczyk2018characterization, letac2018laplace,M:19}) that if $Q$ is injective, then such a finite-dimensional Wishart process exists if and only if either $\alpha \in [n-1,\infty)$, or $\alpha \in \{0,\ldots,n-2\}$ and $\operatorname{rank}(x_0)\leq \alpha$. In case of the latter one has $\operatorname{rank}(X_t)\leq \alpha$ a.s.\ for all $t\geq 0$. When translated to the infinite-dimensional setting this suggests that Wishart processes of infinite rank are hard to come by. Indeed, we prove the following 
(see 
Theorems~\ref{thm:existence} and~\ref{thm:char_infdimwishart}, Corollary~\ref{cor:rankn}, as well as Remarks~\ref{rem:integrability} and~\ref{rem:semigroup_injective} below):
\begin{theorem}\label{thm:intro}
If $Q$ is of trace class and injective and $A$ is bounded then an analytically and probabilistically weak solution to~\eqref{eq:Wishart-formal} exists if and only if $\alpha\in \N$ and $\operatorname{rank}(x_0)\leq \alpha$.
In this case, $\operatorname{rank}(X_t)= \alpha$ a.s.\ for almost all $t>0$.
\end{theorem}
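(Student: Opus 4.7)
The plan is to handle the two implications of the ``if and only if'' separately. For sufficiency I would give an explicit construction via a ``squared Ornstein-Uhlenbeck'' representation; for necessity I would combine the first-moment equation with a reduction to the classical finite-dimensional Wishart admissibility conditions, applied through the affine Laplace-transform machinery to be developed later in the paper.

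For sufficiency, let $\alpha\in\N$ and decompose $x_0=\sum_{i=1}^\alpha y^i_0\otimes y^i_0$ (possible because $\operatorname{rank}(x_0)\leq\alpha$). Choose independent $L_2(H)$-cylindrical Brownian motions $W^1,\ldots,W^\alpha$ and solve the $H$-valued linear SPDEs
\begin{equation*}
dY^i_t=A^*Y^i_t\,dt+\sqrt{Q}\,dW^i_t,\qquad Y^i_0=y^i_0,
\end{equation*}
which are well-posed since $A$ is bounded and $\sqrt Q$ is Hilbert-Schmidt. Setting $X_t:=\sum_{i=1}^\alpha Y^i_t\otimes Y^i_t$, an application of Itô's formula to the bilinear map $y\mapsto y\otimes y$ (noting that the operator-valued quadratic covariation of each $Y^i$ equals $Q\,dt$) yields
\begin{equation*}
dX_t=(\alpha Q+X_tA+A^*X_t)\,dt+\sum_{i=1}^\alpha\bigl(\sqrt Q\,dW^i_t\otimes Y^i_t+Y^i_t\otimes\sqrt Q\,dW^i_t\bigr).
\end{equation*}
The continuous martingale part on the right has the same operator-angle bracket as $\sqrt{X_t}\,d\widetilde W_t\sqrt{Q}+\sqrt{Q}\,d\widetilde W_t^*\sqrt{X_t}$, so a standard martingale-representation argument on a suitable enlargement of the filtration produces an $L_2(H)$-cylindrical Brownian motion $\widetilde W$ with respect to which $X$ is a weak solution to~\eqref{eq:Wishart-formal}. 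By construction $X_t\in S^+_1(H)$ and $\operatorname{rank}(X_t)\leq\alpha$.

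For necessity, suppose a weak solution exists. Taking expectations in~\eqref{eq:Wishart-formal} shows $m_t=\E[X_t]$ obeys the Lyapunov equation $m'_t=\alpha Q+m_tA+A^*m_t$; positivity of $m_t$ when $x_0=0$ immediately forces $\alpha\geq 0$. Upgrading this to $\alpha\in\N$ requires the affine characterization (Theorem~\ref{thm:char_infdimwishart}): the Laplace transform $\E[\exp(-\tr(uX_t))]$ equals $\exp(\phi(t,u)+\tr(\psi(t,u)x_0))$ with $\psi$ solving an operator-valued Riccati equation. Testing $u$ supported in the span of the first $N$ eigenvectors of $Q=\sum_n q_n e_n\otimes e_n$ (all $q_n>0$ by injectivity), the restricted symbols $\phi_N,\psi_N$ satisfy exactly the classical $N$-dimensional Wishart Riccati system, whose admissibility set is $\{0,\dots,N-2\}\cup[N-1,\infty)$. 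Intersecting over $N\in\N$ collapses to $\N$, forcing $\alpha\in\N$. The constraint $\operatorname{rank}(x_0)\leq\alpha$ follows similarly: the finite-dimensional rank condition, applied to $P_N x_0 P_N$ for $N$ large enough, together with $P_N x_0 P_N\to x_0$ in trace norm, yields the bound in the limit.

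Finally, by the uniqueness in law provided by the affine identity, the law of $X$ must coincide with that of the squared-OU construction above. The driving Gaussian vectors $(Y^1_t,\ldots,Y^\alpha_t)$ have joint covariance governed by the strictly positive operator $\int_0^t e^{A^*s}Q e^{As}\,ds$ (strict positivity again from injectivity of $Q$), hence are linearly independent almost surely for Lebesgue-a.e.\ $t>0$; this yields $\operatorname{rank}(X_t)=\alpha$. The main obstacle in the proof is the necessity step forcing $\alpha\in\N$: since the projected process $P_N X_t P_N$ is \emph{not} an autonomous finite-dimensional Wishart process (it couples to the complement through the noise), one cannot reduce to the classical theory pathwise, and the argument must proceed via the affine identity together with a careful limit in $N$ of the classical admissibility sets.
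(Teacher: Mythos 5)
Your two main steps match the paper's: sufficiency via a squared Ornstein--Uhlenbeck construction (the paper builds $X=Y^*Y$ with $Y$ taking values in $L_2(H,\R^\alpha)$, which is equivalent to your $X_t=\sum_{i=1}^\alpha Y^i_t\otimes Y^i_t$ by Remark~\ref{rem:OU_alternative}), and necessity by computing the Laplace transform of the $N$-dimensional compression $P_N^*X_\tau P_N$ at a fixed time $\tau$ and invoking the finite-dimensional characterisation of non-central Wishart laws (Theorem~\ref{thm:char_wishart}). Two imprecisions worth noting in the necessity step: the finite-dimensional Wishart law that appears has scale parameter $P_N^*Q_\tau P_N$ and non-centrality parameter $P_N^*\e^{\tau A^*}x_0\e^{\tau A}P_N$, not $P_N^*QP_N$ and $P_N^*x_0P_N$. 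Strict positivity of $P_N^*Q_\tau P_N$ and the passage from $\operatorname{rank}(P_N^*\e^{\tau A^*}x_0\e^{\tau A}P_N)\leq\alpha$ to $\operatorname{rank}(x_0)\leq\alpha$ both rely on the injectivity of $\e^{\tau A}$; since $A$ is bounded this holds, but you should make the dependence explicit. Also, the claim that ``the restricted symbols $\phi_N,\psi_N$ satisfy exactly the classical $N$-dimensional Wishart Riccati system'' is not quite right (they do not, because $A$ does not commute with $P_N$); what is true, and what the paper uses, is that the Laplace transform of $P_N^*X_\tau P_N$ has the form of equation~\eqref{eq:wishart_laplace}, which is a fixed-time statement rather than a Riccati-dynamical one.

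The final assertion $\operatorname{rank}(X_t)=\alpha$ is where you genuinely diverge from the paper. You appeal to uniqueness in law, reduce to the $Y^*Y$ representation, and then assert that the $\alpha$ independent Ornstein--Uhlenbeck vectors are a.s.\ linearly independent because their (identical) covariance operator $Q_t=\int_0^t\e^{sA^*}Q\e^{sA}\,ds$ is injective. This idea is sound (project onto the span of the first $\alpha$ eigenvectors of $Q_t$ to obtain $\alpha$ i.i.d.\ non-degenerate $\R^\alpha$-valued Gaussians whose matrix has full rank a.s.) but you leave it unproven, and the paper itself flags this linear-independence fact as a non-obvious \emph{consequence} of its rank theorem rather than a tool (see the remark following Corollary~\ref{cor:rankn}). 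The paper instead proves the rank assertion without assuming the $Y^*Y$ structure, via an It\^o determinant argument applied to carefully chosen finite-dimensional compressions (Proposition~\ref{prop:alpha_geq_n} and Corollary~\ref{cor:alpha_is_n}); this is substantially more technical but works for an arbitrary weak solution and also handles random initial data and unbounded $A$. Your route, once the Gaussian linear-independence step is spelled out, is a legitimate and more elementary alternative in the bounded-$A$ setting, and in fact would give the slightly stronger conclusion $\operatorname{rank}(X_t)=\alpha$ a.s.\ for \emph{every} $t>0$, not just Lebesgue-a.e.\ $t$; as written, however, it is a gap rather than a proof.
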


In fact, our results go beyond the realm of Theorem~\ref{thm:intro}. In general, we only assume that $A\colon D(A)\subset H \rightarrow H$ is the generator of a $C_0$-semigroup $(\e^{tA})_{t\geq 0}$ and $Q$ a bounded positive self-adjoint operator on $H$ satisfying $\int_{0}^{t} \| \e^{sA} \sqrt{Q} \|_{L_2(H)} \,ds < \infty$ for all $t>0$. For this setting we have the following results:
\begin{enumerate}
\item \label{sufficient} If $\alpha \in \N$ and $\operatorname{rank}(x_0)\leq \alpha$, then there exists a probabilistically and analytically weak solution $X$ to~\eqref{eq:Wishart-formal}, 
see Theorem \ref{thm:existence}. By construction, this solution is necessarily of rank at most $\alpha$.
\item\label{it:FL} For a Wishart process $X$ we compute its Fourier-Laplace transform  (below, $\tr$ denotes the trace) $$\E \left[ \exp( -\tr((u-iv)X_t)) \,|\, x_0\right] $$ explicitly for
\begin{enumerate}
    \item\label{it:intro:FT_laplace} $u \in S^+(H)$ (the positive self-adjoint operators) and $v=0$;
    \item\label{it:intro:FT_fourier} $v \in S^+(H)$ or $-v\in S^+(H)$ and $u=0$;
    \item\label{it:intro:FT_diag} $u\in S^+(H)$, and $v\in S(H)$ (the self-adjoint operators) and $u$, $v$, $Q$, $A$, and $x_0$ are all jointly diagonizable;
    \item\label{it:intro:FT_mixed} $\alpha \in \N$, $u\in S(H)$, $v \in S(H)$, and $t$ is sufficiently small.
\end{enumerate} 
In all cases it is of exponential affine form, i.e.,
\begin{equation*}
\E \left[ \exp( -\tr((u-iv)X_t)) \,|\, x_0\right] = \exp(-\tr(\psi(t,u-iv)x_0) - \phi(t,u-iv)), \qquad t\geq 0\,,
\end{equation*}
where 
$\psi$ and $\phi$ are solutions of operator valued Riccati equations that can be solved explicitly in all the cases listed above, see Theorem~\ref{thm:laplace_transform} for (a)--(c) 
 and Corollary~\ref{cor:laplace_general} for (d).
As a consequence we obtain that an infinite-dimensional Wishart process is an affine process satisfying the Markov property and is thus unique in law, see Corollaries~\ref{cor:Wishart_Markov} and~\ref{cor:wishart_uniqueinlaw}. 
\item If $Q$ is injective and if there exists a $t>0$ such that $e^{tA}$ is injective, then the existence of a probabilistically and analytically weak solution $X$ to~\eqref{eq:Wishart-formal} 
\emph{implies} that $\alpha \in \N$, $\operatorname{rank}(x_0)\leq \alpha$ and $\operatorname{rank}(X_t) =\alpha$ a.s. for almost all $t >0$,
see Theorem \ref{thm:char_infdimwishart} and Remark \ref{injectiverankn}.
Note that if $A$ is bounded or self-adjoint, then there exists a $t>0$ such that $e^{tA}$ is injective, see Remark~\ref{rem:semigroup_injective}. In particular, Theorem \ref{thm:intro} is thus a consequence of Theorem \ref{thm:char_infdimwishart} (and Theorem \ref{thm:existence}).
\label{necessary-1}
\item If there exists a probabilistically and analytically weak solution $X$ to~\eqref{eq:Wishart-formal}, then either $\operatorname{rank}(X_t)\geq \operatorname{rank}(Q)$ a.s.\ for almost all $t> 0$, or $\alpha \in \N$ and $\operatorname{rank}(X_t) = \alpha$ a.s.\ for almost all $t>0$, see Corollary~\ref{cor:alpha_is_n}. This provides new insights even in the finite-dimensional setting, since a characterisation of Wishart processes in $S(\R^n)$ that are of rank at most $k$ is only known when $A\equiv 0$ and $Q=\id_{\R^n}$ see~\cite[Theorem 3.10]{graczyk2018characterization}.
Moreover, in the infinite dimensional setting under the condition that $\operatorname{rank}(Q) = \infty$, this result implies that  finite-rank Wishart processes
exist if and only if $\alpha \in \mathbb{N}$, see Corollary \ref{cor:charWishartsimple}.

\label{necessary-2}
\item\label{it:feller} If $\e^{tA}$ is injective for all $t\geq 0$, then a probabilistically and analytically weak solution to~\eqref{eq:Wishart-formal} is Feller with respect to (a minor refinement of) the weak-$^*$-topology on the space of self-adjoint trace class operators, see Theorem~\ref{thm:feller}. 
\end{enumerate}

The proof of \ref{sufficient}, i.e., of the existence of a solution to~\eqref{eq:Wishart-formal}, is inspired by the construction presented in \cite{Bru:1991}: we build the solution by considering $X=Y^*Y$, where $Y$ is a suitably chosen $L_2(H,\R^{\alpha})$-valued Ornstein-Uhlenbeck process with $\alpha \in \mathbb{N}$ and $L_2(H, \R^\alpha)$ the space of Hilbert-Schmidt operators from $H$ to $\R^\alpha$. In Remark~\ref{remark:otherY} below we discuss some of the challenges that come with this approach in the infinite-dimensional setting.

Our approach for~\ref{it:FL}, i.e., for establishing the Fourier-Laplace transform of a solution to~\eqref{eq:Wishart-formal}, is in a sense also classical: we apply the It\^o formula to deduce the associated Riccati differential equations.
However, the Riccati equations only allow for a classical solution when the initial value is in $D(A)$, so a subtle approximation argument is needed to obtain the full-blown Fourier-Laplace transform.\par 
The explicit expression of the Fourier-Laplace transform of a Wishart process is not only relevant for analysing properties of the process, it is also crucial for the derivation of closed-form formulas for option pricing using Fourier techniques (see for example \cite{eberlein2010analysis, carr1999option}). To the best of our knowledge, in the finite-dimensional setting (i.e., when $H=\R^n$) the Fourier-Laplace transform has only been derived for the following cases: $u\in S^+(\R^n)$ and $v=0$ (\cite[Theorem 3]{Bru:1991}, \cite[p.12]{GM:11}), or $A=0$ and  $Q=\id_{\R^n}$, see \cite[Theorem 1.1]{Mayerhofer:2019} and the remarks concerning some flaws in the classical literature. Hence our results also extend the results in the finite-dimensional setting.

To prove \ref{necessary-1} we use the Laplace transform obtained in Theorem \ref{thm:laplace_transform} to deduce the Laplace transform of the finite-dimensional projections of an infinite-dimensional Wishart process.
The necessity of $\alpha \in \mathbb{N}$ and $\text{rank}(x_0) \leq \alpha$ then follows from the  characterisation theorem \cite[Theorem 1.1]{M:19} for finite-dimensional non-central Wishart distributions.

As for the proof of~\ref{necessary-2}, the key idea is 
to apply the It\^o formula to the determinant of a suitably chosen finite-dimensional projection of $X_t$ to obtain an expression that leads to the desired conclusion. This has been exploited in the finite-dimensional setting (without projections) e.g.\ in the proof of \cite[Proposition 4.18]{CFMT:11}. However, our proof involves a very technical and subtle approximation result that is needed to deal with the fact that $A$ is unbounded (which restricts us in the projections of $X_t$ that we can take) and the fact that we wish to consider random initial values.

Finally, regarding \ref{it:feller}, we first show that the cone of positive self-adjoint trace-class operators can be equipped with a minor modification of the weak-$*$-topology  to render this space a locally compact Polish space, see Proposition~\ref{prop:posFinRank_LCCB} (specifically, instead of testing only against compact operators, we test against operators of the form $c\id_H + K$, where $c\in \R$ and $K$ is a compact operator on $H$ and $\id_H$ the identity on $H$). This local compactness result in combination with the Laplace transform formula allows us to establish that the solution to~\eqref{eq:Wishart-formal} is Feller, see Theorem~\ref{thm:feller}. 

\subsection{Outlook and open problems}

Theorem~\ref{thm:intro} provides a clean characterisation of infinite-dimensional Wishart processes when $Q$ is injective and $A$ is bounded, but our results also give rise to various intriguing questions. Firstly, we were not able to rule out the existence of a Wishart process when $Q$ is injective, $\alpha \notin \N$, and $\e^{tA}$ is not injective for all $t>0$:
\begin{openproblem}\label{op:char_wishart}
Let $Q$ in~\eqref{eq:Wishart-formal} be injective. Does there exist an unbounded operator $A$ such that~\eqref{eq:Wishart-formal} allows for a solution for some $\alpha\notin \N$? Note that by Corollary~\ref{cor:alpha_is_n} such a process is necessarily of infinite rank a.s.~for almost all $t >0$.
\end{openproblem}

Secondly, we have little insight (even for $H=\R^n$) of existence of Wishart processes when $Q$ is not injective and $\alpha\notin \N$. Corollary~\ref{cor:alpha_is_n} (see also~\ref{necessary-2} above) does not exclude the existence of such processes and at least in certain special cases the finite dimensional results actually imply their existence.
Indeed, let $\alpha \in (0, \infty)$ and suppose that there exists an orthonormal system $(h_k)_{k=1}^{\lceil \alpha \rceil}$ such that $Q=\sum_{k=1}^{\lceil \alpha \rceil} q_k h_k \otimes h_k$ and $A \equiv 0$. Then,
it follows for instance from~\cite[Theorem 2]{Bru:1991} that 
there exists a probabilistically weak solution $(X_t)_{t\geq 0}$ to~\eqref{eq:solX_alpha} satisfying $\operatorname{rank}(X_t)\leq \lceil \alpha \rceil$ for all $x_0$ of the form
$x_0 = \sum_{k = 1}^{\lceil \alpha \rceil} a_k h_k \otimes h_k$, with $a_1,\ldots,a_{\lceil \alpha \rceil}\in\R$ distinct. Note that in this case the solution is confined to a finite dimensional subcone which is isomorphic to $S^+(\mathbb{R}^{\lceil \alpha \rceil})$. In other words, the finite dimensional solutions with values in $S^+(\mathbb{R}^{\lceil \alpha \rceil})$ are just embedded into $S_1^+(H)$. We believe however that also more complex situations could appear.

\begin{openproblem}
If $Q$ is not injective, for what $\alpha\in \R\setminus \N$ and what $x_0\in S_1^+(H)$ does a solution to~\eqref{eq:Wishart-formal} exits? Which role does the operator $A$ play?
\end{openproblem}

Finally, the analysis of existence and characterisation of Wishart processes in finite-dimensional settings is usually also related to the study of the behaviour of the associated eigenvalues and eigenvectors, see e.g.~\cite{bru1989diffusions, graczyk2013multidimensional, katori2004symmetry}. In our setting we exploit the Fourier-Laplace transform and the form of the generator of \eqref{eq:Wishart-formal}
to draw our results. In a forthcoming paper, we aim to derive also the stochastic differential equations for the eigenvalues and eigenvectors of the Wishart process\eqref{eq:Wishart-formal}. As in the recent work 
\cite{bertucci2022spectral}, these eigenvalue equations can then be related to the limiting spectral measure of 
\[
\frac{1}{N} Y^*_t Y_t,
\]
as $N$ tends to infinity, 
where $Y$ is an $n \times N$ dimensional OU-process. Indeed, as conjectured\footnote{Everything is rigorously proved up to the existence part.} in \cite{bertucci2022spectral} for the Brownian case with  $n  \geq N$
the  cumulative
distribution function of the limiting spectral measure is the unique viscosity solution of a certain  partial integro-differential equation. A similar result should also hold true for the case $n < N$, even though -- as remarked in \cite[page 21]{bertucci2022spectral} -- the behaviour of the limiting spectral measures  is expected to be 
entirely different due to an accumulation of eigenvalues at $0$. Note that when $n$ is fixed and $N$ only tends to infinity we actually recover the setting of the current paper. We thus expect that our results  can contribute to a viscosity solution theory for the limiting spectral measure when $n < N$.

\subsection{Structure of the article}
In Subsection~\ref{ssec:notation} below we introduce the notation that is used throughout this article. In Section \ref{sec:squares} we prove the existence of  finite-rank Wishart processes with values in $S_1^+(H)$. In Section \ref{sec:Riccati} we derive the Fourier-Laplace transform of a solution to \eqref{eq:Wishart-formal} and we consider some explicit examples such as the characteristic function of the $\tr(X_t)$.
In Section \ref{sec:characterization} we present necessary conditions for the existence of infinite-dimensional Wishart processes: in Subsection~\ref{ssec:char_laplace} we discuss the case when $Q$ and $e^{tA}$ are injective, for some $t>0$ and in Subsection~\ref{ssec:char_finrank} we characterise finite rank Wishart processes. Finally, in Section~\ref{sec:Feller} we show that the state space $S^+_1(H)$ is locally compact when equipped with a minor modification of the weak-$*$-topology and we use this to show that $X$ is a Feller process.

\subsection{Notation}\label{ssec:notation}
For $(X,\| \cdot \|_{X})$ a Banach space and $E\subset X$ we let $\cB(E)$ denote the (trace) Borel $\sigma$-algebra on $E$.\par  
Let $(H_1, \langle \cdot,\cdot \rangle_{H_1})$ and $(H_2, \langle \cdot, \cdot\rangle_{H_2})$ be separable Hilbert spaces (real or complex). 
Then $\langle \cdot , \cdot \rangle_{H_1}$ is linear in the first argument (and conjugate-linear in the second), $\id_{H_1}\colon H_1 \rightarrow H_1$ denotes the identity on $H_1$, $(L(H_1,H_2), \left\| \cdot \right\|_{L(H_1,H_2))})$ denotes the Banach space of bounded linear operators from
$H_1$ to $H_2$ (endowed with the operator norm), $(K(H_1,H_2),\left\| \cdot \right\|_{L(H_1,H_2))})$ denotes the space of compact operators from $H_1$ to $H_2$ (this is a closed subspace of $L(H_1,H_2)$),
and we set $L(H_1):= L(H_1,H_1)$. The adjoint of an operator $A \in L(H_1,H_2)$
is denoted by $A^*$, i.e., $\langle Ah,g\rangle_{H_1} = \langle h, A^*g\rangle_{H_2}$ for all $h\in H_1$, $g\in H_2$. Note that if $A\in L(\C^n) = \C^{n\times n}$ then $A^*$ is the conjugate transpose of $A$.  For the definition of the adjoint $A^*$ of an unbounded operator $A$ we refer to e.g.~\cite[Appendix B]{EngelNagel:2000}.

For all $p\in [1,\infty)$ let 
$(L_{p}(H_1,H_2), \left\| \cdot \right\|_{L_p(H_1,H_2)})$ be the Banach space of Schatten class operators from $H_1$ to $H_2$, i.e.,
\begin{equation}
 L_{p}(H_1,H_2) = \Big\{ A \in K(H_1,H_2) \colon \sum_{\lambda \in \sigma(A^*A)} \lambda^{p/2} < \infty \Big\},
\end{equation}
and $\| A \|_{L_p(H_1,H_2)}^{p} = \sum_{\lambda \in \sigma(A^*A)} \lambda^{p/2}$ (we assume the reader is familiar with the spectral theorems for bounded self-adjoint and compact self-adjoint operators).
In particular, $L_2(H_1,H_2)$ is the space of Hilbert-Schmidt operators and $L_1(H_1,H_2)$ is the space of trace-class operators 
from $H_1$ to $H_2$. Recall that $L_2(H_1,H_2)$ is a (separable) Hilbert space under the inner product 
$\langle A, B \rangle_{L_2(H_1,H_2)} = \sum_{n=1}^{\infty} \langle A h_n, B h_n \rangle_{H_2}$, where $(h_n)_{n\in \N}$ is an 
orthonormal basis for $H_1$ and the inner product does not depend on the choice of the orthonormal basis. Also recall that we 
have, for all $p,q\in [1,\infty)$ such that $\frac{1}{p} + \frac{1}{q} = 1$ and all $A\in L_p(H_1,H_2), B\in L_{q}(H_2,H_1)$,
$C\in L(H_2)$, $D\in L(H_1)$
that 
\begin{equation}\label{eq:Schattendual}
 \| A \|_{L_p(H_1,H_2)} = \| A^* \|_{L_p(H_2,H_1)} = \| A^* A \|_{L_{p/2}(H_1)}^{\frac{1}{2}},
\end{equation}
\begin{equation}\label{eq:ideal}
\| CAD \|_{L_{p}(H_1,H_2)} \leq \| C \|_{L(H_2)} \| A \|_{L_p(H_1,H_2)} \|D \|_{L(H_1)}, 
\end{equation}
and
\begin{equation}\label{eq:SchattenHolder}
 \| AB \|_{L_1(H_1)} \leq \| A \|_{L_{q}(H_1,H_2)} \| B \|_{L_{p}(H_2,H_1)}.
\end{equation}
In addition, we recall that the trace of $A\in L_1(H_1)$ is defined by
\begin{equation}
 \tr(A) = \sum_{n\in \N} \langle Ah_n, h_n \rangle_{H_1} \in  \C ,
\end{equation}
where $(h_n)_{n\in \N}$ is an orthonormal basis for $H_1$; $\tr(A)$ does not depend on the choice of the orthonormal basis. Writing $V'$ for the dual of a Banach space $V$, 
we recall (see, e.g.,~\cite[Section 19]{Conway:2000}) that the dual space of compact operators satisfies $(K(H_1))'\simeq L_1(H_1)$  under the duality paring
\begin{equation}
 \langle A, B \rangle_{L_1(H_1),K(H_1)}
 = \tr(B^* A),\quad A\in L_1(H_1),\, B\in K(H_1).
\end{equation}
Note that $(L_1(H_1))'\simeq L(H_1)$ under the same paring.\par 
We let $S(H_1)$, $S_c(H_1)$, and $S_p(H_1)$ denote the (closed) subspaces of $L(H_1)$, $K(H_1)$, and $L_p(H_1)$ consisting of all operators that are self-adjoint, and we let $S^+(H_1)$, $S_c^+(H_1)$, and $S_p^+(H_1)$ denote the (closed) subsets of $S(H_1)$, $S_c(H_1)$, and $S_p(H_1)$
consisting of all self-adjoint operators $A$ satisfying $\sigma(A)\subseteq [0,\infty)$, and we let $S^{++}(H_1)$, $S_c^{++}(H_1)$, and $S_p^{++}(H_1)$ denote the subsets of $S(H_1)$, $S_c(H_1)$, and $S_p(H_1)$
consisting of all self-adjoint operators $A$ satisfying $\sigma(A)\subseteq (0,\infty)$. 

We will frequently use the following lemma (which relies on the spectral theorem for compact self-adjoint operators). The proof is straightforward (and under obvious adaptations the result also holds when $H_1$ and/or $H_2$ are finite-dimensional).

\begin{lemma}\label{lemma:Lp(L2)}
Let $H_1$ and $H_2$ be separable Hilbert spaces, and for $i\in \{1,2\}$ let $A_i\in S_c(H_i)$, i.e., $A_i = \sum_{j\in \N} a_j^{(i)} h_j^{(i)} \otimes h_j^{(i)}$, a sequence $(a_j^{(i)})_{j\in \N} \in \ell_{\infty}$, and an orthonormal basis $(h_j^{(i)})_{j\in \N}$ for $H_i$. Let $\cA \in L(L_2(H_1,H_2))$ be given by $\cA(B)=A_2 B A_1$. Then $\cA \in S_c(L_2(H_1,H_2))$ and 
\begin{equation*}
 \cA(B) 
 = 
 \sum_{i,j\in \N} 
    a_i^{(1)} a_j^{(2)} (h_i^{(1)} \otimes h_j^{(2)}) \otimes (h_i^{(1)} \otimes h_j^{(2)}).
\end{equation*}
In particular, for all $p\in [1,\infty)$ one has that $\cA \in L_p(L_2(H_1,H_2))$ if and only 
if $A_1, A_2 \in L_p(H_i)$, and moreover 
\begin{equation}\label{eq:Lp(L2)}
 \| \cA \|_{L_p(L_2(H_1,H_2))}
 =
 \| A_1 \|_{L_p(H_1)} \| A_2 \|_{L_p(H_2)}.
\end{equation}
\end{lemma}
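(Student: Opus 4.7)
The plan is to diagonalize $\cA$ simultaneously with $A_1$ and $A_2$ using the natural orthonormal basis of $L_2(H_1,H_2)$ built from rank-one tensors of their eigenvectors, and then read off everything from that diagonalization.

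First I would recall that if $(h_j^{(i)})_{j\in \N}$ is an orthonormal basis for $H_i$, then the family
$\{h_i^{(1)} \otimes h_j^{(2)} : i,j\in \N\}$, with $(h \otimes k)(x) := \langle x, h\rangle_{H_1} k$, is an orthonormal basis for $L_2(H_1,H_2)$; this is standard and follows directly from $\langle B,C\rangle_{L_2(H_1,H_2)} = \sum_n \langle Bh_n^{(1)}, C h_n^{(1)}\rangle_{H_2}$. Second, I would carry out the one-line computation, using self-adjointness of $A_1$,
\begin{equation*}
\cA(h \otimes k)(x) = A_2\langle A_1 x, h\rangle_{H_1} k = \langle x, A_1 h\rangle_{H_1} A_2 k = \bigl( (A_1 h) \otimes (A_2 k)\bigr)(x),
\end{equation*}
which when specialized to basis vectors yields $\cA(h_i^{(1)}\otimes h_j^{(2)}) = a_i^{(1)} a_j^{(2)} \, (h_i^{(1)}\otimes h_j^{(2)})$. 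This immediately gives the claimed spectral representation of $\cA$. Self-adjointness of $\cA$ can be verified directly via $\langle \cA B, C\rangle_{L_2(H_1,H_2)} = \tr(C^* A_2 B A_1) = \tr((A_2 C A_1)^* B) = \langle B, \cA C\rangle_{L_2(H_1,H_2)}$, again using the self-adjointness of $A_1, A_2$.

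Third, since $A_1, A_2$ are compact self-adjoint, both eigenvalue sequences $(a_i^{(1)})$ and $(a_j^{(2)})$ tend to zero, whence $\{a_i^{(1)} a_j^{(2)}\}_{i,j\in \N} \in c_0$; combined with the previous step this shows $\cA \in S_c(L_2(H_1,H_2))$. Finally, using the paper's convention $\|A\|_{L_p}^p = \sum_{\lambda\in \sigma(A^*A)} \lambda^{p/2}$, which for self-adjoint $A$ reduces to the sum of $p$-th powers of the absolute eigenvalues, I would conclude
\begin{equation*}
\| \cA \|_{L_p(L_2(H_1,H_2))}^p = \sum_{i,j\in \N} |a_i^{(1)} a_j^{(2)}|^p = \Bigl(\sum_i |a_i^{(1)}|^p\Bigr)\Bigl(\sum_j |a_j^{(2)}|^p\Bigr) = \| A_1 \|_{L_p(H_1)}^p \| A_2 \|_{L_p(H_2)}^p,
\end{equation*}
which simultaneously delivers the equivalence $\cA \in L_p \Leftrightarrow A_1, A_2 \in L_p$ and the norm identity \eqref{eq:Lp(L2)}.

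There is no genuine obstacle here: the entire argument reduces to spotting that the rank-one tensors $h_i^{(1)}\otimes h_j^{(2)}$ are simultaneous eigenvectors of $\cA$ and then invoking the spectral characterization of the Schatten norms. The only point to be slightly careful about is the ordering/convention in the tensor product $h\otimes k$ (which determines on which side $A_1$ versus $A_2$ acts), and -- if one works over $\C$ -- keeping track of conjugate-linearity in the Hilbert-Schmidt inner product; neither affects the identity \eqref{eq:Lp(L2)}.
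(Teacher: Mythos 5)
Your proof is correct and follows precisely the "straightforward" route the paper alludes to (the paper does not write out a proof, saying only that it relies on the spectral theorem for compact self-adjoint operators): diagonalize $\cA$ on the orthonormal basis $\{h_i^{(1)}\otimes h_j^{(2)}\}$ of $L_2(H_1,H_2)$ and read off compactness, self-adjointness, and the Schatten norms from the eigenvalue sequence $(a_i^{(1)}a_j^{(2)})_{i,j}$.
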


For the convenience of the reader we recall the singular value decomposition for compact operators (see e.g.~\cite[Thm.\ VI.3.6]{Werner:2000}). 
\begin{theorem}\label{thm:singular_value_decomposition}
Let $H_1,H_2$ be separable Hilbert spaces and let $A\in K(H_1,H_2)$. Then there exist $s_1\geq s_2\geq \ldots \geq 0$ and orthonormal systems $(e_n)_{n\in \N}$ and $(f_n)_{n\in \N}$ in $H_1$ and $H_2$ such that $$A = \sum_{n\in \N} s_n e_n \otimes f_n.$$
\end{theorem}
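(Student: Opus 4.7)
The plan is to reduce the statement to the spectral theorem for compact self-adjoint operators applied to $T := A^*A \in L(H_1)$. Since $A\in K(H_1,H_2)$, the operator $T$ is compact, self-adjoint, and positive, so the spectral theorem yields non-negative eigenvalues $\lambda_1\geq \lambda_2\geq \cdots \geq 0$ with $\lambda_n\to 0$ and an associated orthonormal system $(e_n)_{n\in\N}$ in $H_1$ such that $Th=\sum_{n\in \N}\lambda_n \langle h,e_n\rangle_{H_1} e_n$ for every $h\in H_1$, with $\overline{\mathrm{span}}\{e_n\colon n\in\N\}=(\ker T)^{\perp}$. Setting $s_n:=\sqrt{\lambda_n}$ produces the prospective singular values $s_1\geq s_2\geq\cdots\geq 0$.

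Next I would define the $f_n$. For every $n$ with $s_n>0$ put $f_n:=s_n^{-1}Ae_n\in H_2$. Using $A^*Ae_n=\lambda_n e_n$, one computes
\begin{equation*}
\langle f_n,f_m\rangle_{H_2}=s_n^{-1}s_m^{-1}\langle Ae_n,Ae_m\rangle_{H_2}=s_n^{-1}s_m^{-1}\langle A^*Ae_n,e_m\rangle_{H_1}=\delta_{nm},
\end{equation*}
so $(f_n)$ is an orthonormal system in $H_2$. If only finitely many $s_n$ are positive one simply truncates (or extends $(f_n)$ by arbitrary orthonormal vectors in the orthogonal complement), which does not affect the representation since the corresponding terms vanish.

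It remains to verify $A=\sum_{n\in\N}s_n\,e_n\otimes f_n$. A short argument gives $\ker A=\ker T$: the inclusion $\ker A\subseteq \ker T$ is immediate, and conversely $Th=0$ forces $\|Ah\|_{H_2}^2=\langle Th,h\rangle_{H_1}=0$. Writing any $h\in H_1$ as $h=h_0+\sum_{n\in\N}\langle h,e_n\rangle_{H_1}e_n$ with $h_0\in\ker T=\ker A$, we obtain
\begin{equation*}
Ah=\sum_{n\in\N}\langle h,e_n\rangle_{H_1}Ae_n=\sum_{n\in\N}s_n\langle h,e_n\rangle_{H_1}f_n=\Big(\sum_{n\in\N}s_n\,e_n\otimes f_n\Big)(h),
\end{equation*}
with convergence in $H_2$ justified by Parseval together with $s_n\to 0$. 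The same estimate shows that the partial sums $\sum_{n=1}^{N}s_n\,e_n\otimes f_n$ in fact converge to $A$ in the operator norm of $L(H_1,H_2)$.

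The proof is essentially routine once the spectral theorem for compact self-adjoint operators is invoked; the only subtlety worth flagging is the identification $\ker A=\ker A^*A$, which is what guarantees that the eigenvectors of $A^*A$ are sufficient to recover the full action of $A$ on $H_1$.
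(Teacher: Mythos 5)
Your proof is correct. The paper itself does not prove this statement; it quotes the singular value decomposition for compact operators from Werner's textbook (Thm.\ VI.3.6) without proof, so there is no in-paper argument to compare against. The route you take — passing to $T=A^*A$, applying the spectral theorem for compact self-adjoint operators, setting $s_n=\sqrt{\lambda_n}$ and $f_n=s_n^{-1}Ae_n$, and using $\ker A=\ker A^*A$ to recover the action of $A$ on all of $H_1$ — is exactly the standard argument. One small imprecision worth tidying: you state both that the eigenvalues $\lambda_n$ are merely non-negative (so possibly zero) \emph{and} that $\overline{\operatorname{span}}\{e_n\}=(\ker T)^\perp$; these two statements are jointly consistent only if every $\lambda_n$ is strictly positive. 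The cleaner phrasing is to first list the strictly positive eigenvalues with an orthonormal system spanning $(\ker T)^\perp$, and only then (if that list is finite) pad with $s_n=0$ and with arbitrary orthonormal extensions of $(e_n)$ in $\ker T$ and of $(f_n)$ in $(\operatorname{ran} A)^\perp$, as you indeed do in the next paragraph. The operator-norm convergence is fine: $\bigl\|A-\sum_{n\le N}s_n\,e_n\otimes f_n\bigr\|_{L(H_1,H_2)}\le s_{N+1}\to 0$.
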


%

\section{Existence of infinite-dimensional Wishart processes}\label{sec:squares}

Let $H$ be a real Hilbert space, let $A\colon D(A)\subset H \rightarrow H$ be the generator of a $C_0$-semigroup, let $Q\in S^+(H)$, let $n\in \N$, and let $W = (W_t)_{t\geq 0}$ be an $L_2(H)$-cylindrical Brownian motion, i.e., (formally) $W = \sum_{k\in \N} \beta_k(t) C_k$ where $(C_k)_{k\in\N}$ is an orthonormal basis for $L_2(H)$ and $(\beta_k)_{k\in \N}$ is a sequence of independent standard Brownian motions (see also~\cite[Def.\ 2.2]{NeervenVeraarWeis:2015}). Finally, let $x_0 \in S_1^+(H)$. Consider the stochastic differential equation in $S^+(H)$ formally given by:
\begin{equation}\left\{
\begin{aligned}\label{eq:wishart1}
dX_t&= nQ\, dt + X_t A \,dt + A^* X_t \,dt +\sqrt{X_t} \,dW_t \sqrt{Q} + \sqrt{Q} \,dW_t^* \sqrt{X_t},\, t\geq 0;\\
X_0 & = x_0.
\end{aligned}\right.
\end{equation}

Theorem~\ref{thm:existence} below ensures the existence of a probabilistically and analytically weak $S_1^+(H)$-valued solution to~\eqref{eq:wishart1} provided $x$ is of rank at most $n$ and 
$(\e^{sA}\sqrt{Q})_{s\geq 0}$ satisfies the integrability condition~\eqref{eq:ass_integrability}.  
Inspired by~\cite{bru1989diffusions}, we prove Theorem~\ref{thm:existence} 
by showing that $X_t = Y_t^* Y_t$, where $Y$ is an $L_2(H,\R^n)$-valued Ornstein-Uhlenbeck process.

\begin{theorem}\label{thm:existence}
Let $H$ be a separable real Hilbert space, let $A\colon D(A)\subset H \rightarrow H$ be the generator of a $C_0$-semigroup $(\e^{tA})_{t\geq 0}$, 
let $Q\in S^+(H)$, let $n\in \N$, let $(\Omega,\cF,\P,(\cF_t)_{t\geq 0})$ be a filtered probability space satisfying the usual conditions and rich enough to allow for an $L_2(H)$-cylindrical $(\cF_t)_{t\geq 0}$-Brownian motion, and let $p\in [1,\infty)$ and $x_0\in L^p((\Omega,\cF_0,\P),S^+_1(H))$ be such that $x_0$ is of rank at most $n$ $\P$-a.s.
Assume moreover that 
\begin{equation}\label{eq:ass_integrability}
 \int_{0}^{t} \| \e^{s A} \sqrt{Q} \|_{L_2(H)}^2\,ds 
 < \infty
\end{equation}
for all $t\geq 0$.
Then there exists an $L_2(H)$-cylindrical $(\cF_t)_{t\geq 0}$-Brownian motion $W$ and a continuous adapted $S^+_1(H)$-valued process $(X_t)_{t\geq 0}$ such that $X_t$ is of rank at most $n$,
\begin{equation}\label{eq:Xintegrability}
 \E \sup_{s\in [0,t]} \| X_s \|_{L^1(H)}^p 
 < \infty, 
\end{equation}
and 
\begin{equation}\label{eq:solX}
\begin{aligned}
 \langle X_t g, h\rangle_{H} 
 & = 
 \langle x_0 g, h \rangle_{H}
 + 
 \int_0^{t} 
 (n\langle Q g, h \rangle_H 
 + \langle X_s A g, h \rangle_{H} 
 + \langle X_s g, Ah \rangle_H ) \,ds
 \\ & \quad 
 + \int_{0}^{t} \langle \sqrt{X}_s \,dW_s \sqrt{Q}g, h\rangle_H + \int_0^t \langle \sqrt{Q} \,dW^*_s \sqrt{X}_s g, h \rangle_H
\end{aligned}
\end{equation}
for all $t\geq 0$ and all $h,g\in D(A)$. In particular,
if $A\in L(H)$ and $Q\in S_1^{+}(H)$, then 
\begin{equation}\label{eq:solXnice}
\begin{aligned}
 X_t 
 & = 
 x_0
 + 
 \int_0^{t} 
 ( n Q + X_s A + A^* X_s ) \,ds 
 +
 \int_0^{t} \sqrt{X}_s \,dW_s \sqrt{Q}
 + 
 \int_0^t \sqrt{Q} \,dW^*_s \sqrt{X}_s
\end{aligned}
\end{equation}
for all $t\geq 0$.
\end{theorem}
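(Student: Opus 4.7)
My strategy is the one hinted at in the text: realize $X_t = Y_t^{*}Y_t$ as the squared modulus of an $L_2(H,\R^n)$-valued Ornstein-Uhlenbeck process $Y$, thereby lifting Bru's finite-dimensional construction. Identifying $L_2(H,\R^n)$ with $H^n$ via $Y \mapsto (Y^{*}e_1,\ldots,Y^{*}e_n) =: (y^{(1)},\ldots,y^{(n)})$, I would take $y^{(1)},\ldots,y^{(n)}$ to be $n$ independent $H$-valued mild Ornstein-Uhlenbeck processes
\[
 y^{(j)}_t = \e^{tA^{*}} y^{(j)}_0 + \int_0^t \e^{(t-s)A^{*}} \sqrt{Q}\,dW^{(j)}_s, \qquad j=1,\ldots,n,
\]
driven by independent $H$-cylindrical Brownian motions $W^{(1)},\ldots,W^{(n)}$ extracted from the given $L_2(H)$-cylindrical Brownian motion on the probability space. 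Since $H$ is a Hilbert space, $(\e^{tA})^{*}=\e^{tA^{*}}$ is again a $C_0$-semigroup and the Hilbert-Schmidt norm is invariant under taking adjoints, so~\eqref{eq:ass_integrability} guarantees that these stochastic convolutions define continuous $H$-valued processes with $\E\sup_{s\le t}\|y^{(j)}_s\|_H^{2p}<\infty$. Setting $X_t := \sum_{j=1}^n y^{(j)}_t \otimes y^{(j)}_t$ then produces a continuous, $(\cF_t)$-adapted, $S_1^{+}(H)$-valued process of rank at most $n$ obeying~\eqref{eq:Xintegrability}.

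For the initial value, since $x_0 \in L^p(\Omega; S_1^+(H))$ is $\P$-a.s.\ of rank $\leq n$, a measurable selection applied to the spectral decomposition of positive trace-class operators of rank at most $n$ (the content of \lemref{lem:approx_ONS2}) yields $\cF_0$-measurable $y^{(j)}_0$ with $x_0=\sum_j y^{(j)}_0 \otimes y^{(j)}_0$ and $\E\sum_j\|y^{(j)}_0\|_H^{2p}<\infty$. To verify~\eqref{eq:solX}, I would apply It\^o's formula, in the standard weak form for $H$-valued mild OU processes, to each scalar product $\langle y^{(j)}_t,g\rangle\langle y^{(j)}_t,h\rangle$ with $g,h\in D(A)$, using $\langle A^{*}y^{(j)}_t,g\rangle=\langle y^{(j)}_t,Ag\rangle$. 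Summing over $j$ and exploiting independence of the $W^{(j)}$, the drift collapses precisely to $n\langle Qg,h\rangle+\langle X_tAg,h\rangle+\langle X_tg,Ah\rangle$, while the local-martingale part takes the explicit form
\[
 M_t^{g,h} = \sum_{j=1}^n \int_0^t \bigl[\langle y^{(j)}_s,g\rangle\langle\sqrt{Q}\,dW^{(j)}_s,h\rangle + \langle y^{(j)}_s,h\rangle\langle\sqrt{Q}\,dW^{(j)}_s,g\rangle\bigr].
\]

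The remaining and most delicate step is to rewrite $M^{g,h}$ in the form $\int_0^t\langle\sqrt{X}_s\,dW_s\sqrt{Q}\,g,h\rangle_H + \int_0^t\langle\sqrt{Q}\,dW_s^{*}\sqrt{X}_s\,g,h\rangle_H$ for some $L_2(H)$-cylindrical $(\cF_t)$-Brownian motion $W$. I would first verify that the joint quadratic (co)variations of the family $\{M^{g,h}\}_{g,h\in D(A)}$ match those formally prescribed by this expression, and then invoke a cylindrical martingale-representation theorem to produce $W$, enlarging the probability space (which we may do by assumption) with an independent auxiliary $L_2(H)$-cylindrical Brownian motion acting on $\mathrm{range}(\sqrt{X}_s)^{\perp}$. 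This is where I expect the main obstacle: the martingale $M^{g,h}$ is driven only by the $n$-dimensional vector $(W^{(1)},\ldots,W^{(n)})$, whereas the target $W$ must be a genuinely infinite-dimensional cylindrical Brownian motion, so the missing degrees of freedom have to be filled in by the auxiliary noise in an $(\cF_t)$-progressive and jointly measurable way, compatible with the finite-rank constraint $\mathrm{rank}(\sqrt{X}_s)\leq n$. Once~\eqref{eq:solX} is established, the promotion to the operator-valued form~\eqref{eq:solXnice} in the case $A\in L(H)$, $Q\in S_1^+(H)$ is routine: $D(A)=H$ so the weak form extends to all $g,h\in H$ by density, and the operator-valued It\^o identity then follows from standard Banach-space It\^o arguments combined with the Hilbert-Schmidt/trace-class bounds on $\sqrt{X}_s\cdot\sqrt{Q}$.
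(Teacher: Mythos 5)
Your proposal follows essentially the same route as the paper: realize $X=Y^*Y$ for an Ornstein--Uhlenbeck process and derive \eqref{eq:solX} via It\^o; the paper phrases $Y$ as a single $L_2(H,\R^n)$-valued process, but Remark~\ref{rem:OU_alternative} records that this is identical to your $n$-tuple of independent $H$-valued OU processes driven by $A^*$. Two small corrections. First, the measurable selection of a square root $y_0$ of $x_0$ is Lemma~\ref{lem:measurable_EVD}, not Lemma~\ref{lem:approx_ONS2} (the latter is an approximation of orthonormal eigensystems used for Proposition~\ref{prop:alpha_geq_n}). Second, the step you flag as ``the main obstacle'' does not actually require a martingale-representation theorem: the paper constructs $W$ directly in \eqref{eq:defW} as
\[
W(\varphi)=\int_0^t\bigl\langle Y_s(Y_s^*Y_s)^{-\frac{1}{2}}P_s\varphi_s,\,dB_s\bigr\rangle_{L_2(H)}+\int_0^t\bigl\langle(\id_H-P_s)\varphi_s,\,d\tilde B_s\bigr\rangle_{L_2(H)},
\]
with $P_s$ the projection onto $\operatorname{range}(Y_s^*)$ and $\tilde B$ an independent auxiliary $L_2(H)$-cylindrical Brownian motion; one then checks the covariance $\E[W(\varphi)W(\psi)]=\langle\varphi,\psi\rangle_{L^2(0,\infty;L_2(H))}$ by hand (using that $\|Y_s(Y_s^*Y_s)^{-\frac12}P_s\|_{L_2(H)}=\sqrt n$) and verifies $\sqrt{Y_s^*Y_s}\,dW_s=Y_s^*\,dB_s$ pointwise via $\sqrt{Y_s^*Y_s}P_s=\sqrt{Y_s^*Y_s}$. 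So the filling-in on $\operatorname{range}(\sqrt{X_s})^\perp$ that you correctly identify as necessary is done constructively, avoiding any progressive-measurability subtleties that a representation-theorem argument would have to address.
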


\begin{proof}
In order to construct an $L_2(H,\R^n)$-valued Ornstein-Uhlenbeck process $Y$ such that $X=Y^*Y$, we first define 
$\cA\colon D(\cA) \subseteq L_2(H, \R^n) \rightarrow L_2(H, \R^n)$ by 
\begin{equation}
\begin{aligned}\label{eq:defcalA}
 D(\cA) &= \{ C \in L_2(H, \R^n) \colon CA \in L_2(H, \R^n) \};\\
 \cA(C) & = CA, \quad C\in D(\cA).
\end{aligned}
\end{equation}
Here ``$CA \in L_2(H, \R^n)$'' is to be read as: ``the linear mapping $D(A) \ni h \mapsto CAh \in \R^n$ extends to a Hilbert Schmidt operator on $H$ with values in $\R^n$''; note that $C \in D(\cA)$ if and only if there exist $v_1,\ldots,v_n\in D(A)$ and $e_1,\ldots,e_n\in \R^n$ such that $C = \sum_{k=1}^{n} v_k \otimes e_k$.
Note moreover that $\cA$ is the generator of a $C_0$-semigroup on $L_2(H,\R^n)$ given by $\e^{ t \cA}( C ) = C \e^{t A}$, $t\geq 0$. In addition, we define $\cQ \in L(L_2(H,\R^n))$ by 
\begin{equation}\label{eq:defcalQ}
\cQ (C) = CQ, \quad C\in L_2(H,\R^n).
\end{equation} 
As $\langle C,D \rangle_{L_2(H,\R^n)}= \langle C^*, D^* \rangle_{L_2(\R^n, H)}$ for all $C,D\in L_2(H,\R^n)$, it is easily verified that $\cQ \in S^+(L_2(H,\R^n))$. Moreover, due to~\eqref{eq:Lp(L2)} and~\eqref{eq:ass_integrability} we have 
\begin{equation}\label{eq:integrability}
 \int_0^{t} \| \e^{s \cA} \sqrt{\cQ} \|_{L_2(L_2(H,\R^n))}^2 \,ds 
 = \int_0^{t} \| \e^{s A} \sqrt{Q} \|_{L_2(H,\R^n)}^2 \,ds < \infty
\end{equation}
for all $t\geq 0$. \par 
Next, consider the following stochastic differential equation in $L_2(H,\R^n)$:
\begin{equation}\label{eq:SDE_Y}
dY_t= \cA Y_t dt + \sqrt{\cQ} dB_t,\quad t\geq0; \quad Y_0=y_0,
\end{equation}
where $B$ is an $L_2(H,\R^n)$-cylindrical Brownian motion and 
$y_0 \in L^{p}((\Omega,\cF_0,\P),L_2(H,\R^n))$ satisfies $y_0^*y_0=x_0$ (note that every $x\in S_1^+(H)$ that is of rank at most $n$ can be written as $y^*y$ for some $y\in L_2(H,\R^n)$, and this decomposition can be done in a measurable way, see Lemma~\ref{lem:measurable_EVD}). \par 

Classical stochastic integration theory in 
Hilbert spaces (see e.g.~\cite[Theorems 5.2 and 5.4]{DaPratoZabczyk:1992}) and~\eqref{eq:integrability} ensure the existence 
of an adapted process $Y\colon [0,\infty)\times \Omega \rightarrow L_2(H,\R^n)$ with continuous sample paths
satisfying
\begin{equation}\label{eq:solY}
 Y_t = \e^{t\cA}(y_0) 
 + \int_{0}^{t} \e^{(t-s)\cA}\sqrt{\cQ} \,dB_s
 = y_0\e^{tA} + \int_0^t \,dB_s \sqrt{Q} \e^{(t-s)A}, \quad t \geq 0.
\end{equation}

Note that the fact that $x_0\in L^p(\Omega,L_1(H))$ (whence $y_0\in L^p(\Omega,L_2(H))$), assumption~\eqref{eq:integrability} and~\eqref{eq:solY} imply that 
\begin{equation}\label{eq:Yintegrability}
\E \sup_{s\in [0,t]} \| Y_s \|_{L_2(H,\R^n)}^p < \infty\,,
\end{equation}
for all $t>0$.\par 
%

Analogous to the finite dimensional case considered in~\cite{Bru:1991}, we will show that $(Y_t^* Y_t)_{t\geq 0}$ provides an analytically and probabilistically weak solution to~\eqref{eq:wishart1}. We begin by expressing $\langle Y h, Y g\rangle_{\R^n}$, $h,g\in D(A)$, as an It\^o process. To this end, first observe that by the stochastic Fubini theorem we have
\begin{equation*}
 Y_t h = y_0 h 
 + 
 \int_{0}^{t} Y_s A h \,ds 
 +
 \int_{0}^{t} \,dB_s \sqrt{Q} h
\end{equation*}
for all $t\geq 0$ and all $h\in D(A)$.  
Let $(e_k)_{k=1}^{n}$ be an orthonormal basis for $\R^n$, and let $(h_j)_{j\in \N}$ be an orthonormal basis for $H$. 
Applying the It\^o formula (see, e.g.,~\cite[Section 4.5]{DaPratoZabczyk:1992}) we obtain
\begin{equation}\label{eq:YYito}
\begin{aligned}
\langle Y_t g, Y_t h \rangle_{\R^n}
& =
\langle y_0 g, y_0 h \rangle_{\R^n}
+
\int_{0}^{t} \langle Y_s g, Y_s A h \rangle_{\R^n} + \langle Y_s A g, Y_s h \rangle_{\R^n} \,ds
\\ & \quad +
\int_{0}^{t} \langle Y_s g, dB_s \sqrt{Q} h \rangle_{\R^n} + \langle dB_s \sqrt{Q}  g , Y_s h \rangle_{\R^n} 
\\ & \quad 
+
t \sum_{k=1}^{n} \sum_{j\in \N} 
    \langle h_j, \sqrt{Q} h\rangle_H
    \langle h_j, \sqrt{Q} g\rangle_H \langle e_k, e_k \rangle 
\\ & =  
 \langle y_0^*y_0g, h\rangle_{H} 
 + \int_{0}^{t}\left( 
    n\langle Qg, h\rangle_{H} 
    + \langle Y_s^* Y_s A g, h \rangle_{H}
    + \langle Y_s^* Y_s g, A h \rangle_{H} \right)\,ds 
 \\ & \quad 
 + \int_{0}^{t} \langle Y_s^* \,dB_s \sqrt{Q} g , h \rangle_{H} + \int_{0}^{t} \langle \sqrt{Q} \,dB_s^* Y_s g,h \rangle_{H}\,,
\end{aligned}
\end{equation}
for all $g,h\in D(A)$.\par 

Analogous to~\cite{Bru:1991}, we now wish to show that $Y_s^* \,dB_s = \sqrt{Y_s^* Y_s} d W_s$ for some $L_2(H)$-cylindrical Brownian motion $W$.  
To this end, let $\tilde{B}$ be an $L_2(H)$-cylindrical Brownian motion independent of $B$. Moreover, %
let $P_s \colon H \rightarrow H$ be\label{proj} the orthogonal 
projection onto $\operatorname{range}(Y_s^*)$, $s\in [0,\infty)$. The singular value decomposition implies that 
$\| Y_s(Y_s^* Y_s)^{-\frac{1}{2}}P_s \|_{L_2(H)} = \sqrt{n} $ 
for all $s \geq 0$, whence we can define
$W\colon L^2([0,\infty);L_2(H)) \rightarrow L^2(\Omega)$
by setting
\begin{align}\label{eq:defW}
W(\varphi) & = \int_{0}^{t} \langle Y_s (Y_s^*Y_s)^{-\frac{1}{2}} P_s \varphi_s, \,dB_s\rangle_{L_2(H)} 
 + \int_0^{t} \langle (\id_{H}-P_s)\varphi_s , \,d\tilde{B}_s \rangle_{L_2(H)}\,,
\end{align}
for all $\varphi\in L^2([0,\infty);L_2(H))$.
One readily checks that  
\begin{equation*}
\E [W(\varphi) W(\psi)] = \langle \varphi, \psi \rangle_{L^2(0,\infty;L_2(H))}
\end{equation*}
for all $\varphi, \psi \in L^2([0,\infty);L_2(H))$, i.e., $W$
is an $L_2(H)$-cylindrical Brownian motion. 
Moreover, as $\sqrt{Y_s^* Y_s}P_s = \sqrt{Y_s^* Y_s}$ and $\sqrt{Y_s^* Y_s} (\id_H - P_s) = 0$, we obtain that  $\sqrt{Y^*_s Y_s}dW_s = Y_s^* dB_s$. 
This and~\eqref{eq:YYito} imply that the 
process $X=Y^*Y$ \label{p:X=Y*Y} satisfies~\eqref{eq:solX}
for all $t\geq 0$ and $g,h\in D(A)$.
The fact that $Y$ is a continuous adapted $L_2(H,\R^n)$-valued process satisfying~\eqref{eq:Yintegrability} ensures that $X$ is a continuous adapted $S_1^+(H)$-valued process such that $X_t$ is of rank $n$ and~\eqref{eq:Xintegrability} is satisfied for all $t\geq 0$.\par 
Finally,~\eqref{eq:solXnice} follows from~\eqref{eq:solX} as under the conditions on $Q$ and $A$ all integrals in~\eqref{eq:solXnice} are well-defined and $\{ g\otimes h \colon g,h\in H\}$ separates points in $L_2(H)$. 
\end{proof}

\begin{remark}\label{rem:OU_alternative}
An equivalent way to construct the process 
$X$ in~Theorem~\ref{thm:existence} 
is to set $X_t= \sum_{i=1}^{n} Y^{(i)}_t \otimes Y^{(i)}_t$, $t\geq 0$, where $Y^{(1)}, \ldots, Y^{(n)}$ are 
independent $H$-valued Ornstein-Uhlenbeck processes.
To see this, let $B$, $y_0$ and $Y$ be as in the proof of Theorem~\ref{thm:existence} (see also ~\eqref{eq:SDE_Y}). Let $(e_i)_{i=1}^{n}$ be an orthonormal basis for $\R^n$. Define the $H$-cylindrical 
Brownian motions $B^{(1)},\ldots,B^{(n)}$ by $B^{(i)}(h 1_{[0,t)}) = B((h\otimes e_i)1_{[0,t)})$, $h\in H, t\geq 0$, and note that $B^{(1)},\ldots,B^{(n)}$ are independent. Set $y_0^{(i)} = y_0^* e_i$ 
and $Y_t^{(i)} = Y_t^* e_i$. Then 
\begin{align}\label{eq:OUprocess}
 Y_t^{(i)} & = \e^{tA^*}y_0^{(i)} + \int_{0}^{t} \e^{(t-s)A^*} \sqrt{Q}\,dB^{(i)}_s,\quad t\geq 0,
\end{align}
and $X_t= \sum_{i=1}^{n} Y^{(i)}_t \otimes Y^{(i)}_t$, $t\geq 0$.
\end{remark}
\begin{remark}
One can show that the process $X$ constructed in the proof 
of Theorem~\ref{thm:existence} is also a mild solution:
\begin{equation}
\begin{aligned}
X_t 
& = 
\e^{tA} x_0 \e^{tA^*}
+
n\int_{0}^{t} \e^{sA}Q\e^{sA^*}\,ds 
\\ 
&\quad 
+
\int_{0}^{t} \e^{(t-s)A} \sqrt{X_s} dW_s \sqrt{Q} \e^{(t-s)A^*} + \e^{(t-s)A} \sqrt{Q} dW_s^* \sqrt{X_s} \e^{(t-s)A^*}
\end{aligned}
\end{equation}
$\P$-a.s.\ for all $t\geq 0$.
\end{remark}

\begin{remark}\label{rem:integrability}
Assumption~\eqref{eq:ass_integrability} is satisfied e.g.\ in the following situations:
\begin{enumerate}
 \item $Q\in S_1^+(H)$: indeed, recall from e.g.~\cite[Theorem 1.2.2]{Pazy:1983} that there exist $M,\omega \geq 0$ such that 
 $\| \e^{sA} \|_{L(H)} \leq M \e^{\omega s}$ for all $s\geq 0$, whence by~\eqref{eq:ideal} we obtain
 \begin{equation}   
\| \e^{s A} \sqrt{Q} \|_{L_2(H)}
 \leq \| \e^{s A} \|_{L(H)} \| \sqrt{Q} \|_{L_2(H)} 
 \leq M \e^{\omega s} \| Q \|_{L_1(H)}
 \end{equation}
 for all $s\geq 0$.
 \item $(\lambda - A)$ is the generator 
 of an analytic $C_0$-semigroup  for some $\lambda \geq 0$ (see~\cite[Section 2.5]{Pazy:1983}) \emph{and} there exists a $\theta\in [0,\frac{1}{2})$ such that 
\begin{equation}\label{eq:ass_analytic}
 \| (\lambda  - A)^{-\theta} Q \|_{L_2(H)} < \infty.
\end{equation}
Indeed, in this case we have, by~\cite[Thm. 2.6.13]{Pazy:1983}, that there exist a constant $C>0$ such that 
 \begin{align*}
  \| \e^{sA} Q \|_{L_2(H)}
  & =
  \e^{\lambda s} \| (\lambda - A)^{\theta} \e^{s(A-\lambda)} (\lambda - A)^{-\theta} Q \|_{L_2(H)}
  \\ & 
  \leq C \e^{\lambda s} s^{-\theta} \| (\lambda  - A)^{-\theta} Q \|_{L_2(H)}\,,
 \end{align*}
 for all $s> 0$, which is square-integrable on $[0,t]$ for all $t\geq 0$ because $\theta<\frac{1}{2}$.\par 
 \end{enumerate}
 A `classical' example of an operator that generates an analytic semigroup is an elliptic differential operator of order $2m$ ($m\in \N$) on a smooth, bounded domain $D\subset \R^d$ ($d\in \N$) with Dirichlet boundary conditions (see e.g.~\cite[Thm. 2.7]{Pazy:1983}).
 In this case, Weyl's theorem implies that $(-A)^{-\theta} \in L_2(L^2(D))$ 
 whenever $\theta > \frac{d}{4m}$, so whenever $d < 2m$ it is possible to choose a $\theta \in [0,\frac{1}{2})$ such that~\eqref{eq:ass_analytic} is satisfied for any $Q\in L(H)$. More specifically, if $A$ is the one-dimensional Laplacian on a bounded interval with Dirichlet boundary conditions and $Q\in L(H)$, then $A$ and $Q$ satisfy~\eqref{eq:ass_analytic} with $\lambda = 0$ for any $\theta>\frac{1}{4}$.  
\end{remark}

\begin{remark}\label{remark:otherY} [Regarding more general Ornstein-Uhlenbeck processes]
The operators $\cA \colon D(\cA)\subseteq L_2(H,\R^n) \rightarrow L_2(H,\R^n)$ and $\cQ \in S^+(L_2(H,\R^n))$ we construct in~\eqref{eq:defcalA} and~\eqref{eq:defcalQ} have a very specific structure. One may therefore wonder if one can obtain an SDE of the form~\eqref{eq:wishart1} by considering $Y^*Y$ for more general Ornstein-Uhlenbeck processes $Y$. This seems to be not the case: indeed,
let $K$ be another Hilbert space and let $\cA \in L(L_2(H,K))$ and $\cQ \in S_1^+(L_2(H,K))$ be general operators (for simplicity we only consider the case that $\cA$ is bounded and $\cQ$ is of trace class here). Then
one can write $\cQ = \sum_{k\in \N} q_k C_k \otimes C_k$, where $(C_k)_{k\in \N}$ is an orthonormal basis for $L_2(H,K)$ and $(q_k)_{k\in \N}$ is a positive sequence in $\ell^1$. The solution to~\eqref{eq:SDE_Y} is still given by~\eqref{eq:solY} and still satisfies~\eqref{eq:Yintegrability}, and by applying It\^o's formula to $\langle Y_t g, Y_t h\rangle_K$ we obtain:
\begin{align*}
\langle Y_t g, Y_t h \rangle
& =
\langle Y_0 g, Y_0 h \rangle
+
\int_{0}^{t} \langle Y_s g, \cA (Y_s) h \rangle_K + \langle \cA (Y_s) g, Y_s h \rangle_K \,ds
\\ & \quad +
\int_{0}^{t} \langle Y_s g, \sqrt{\cQ}(dB_s) h \rangle_K + \langle \sqrt{\cQ}(dB_s) g , Y_s h \rangle_K 
+
t \sum_{k,n\in \N} \langle \cQ( C_k )g, h \rangle_K.  
\end{align*}
We immediately see that in order to obtain $Y_s^*Y_s$ in the deterministic integrands on the right-hand side above, one needs that $\cA(Y) = YA$ for some operator $A\in L(H)$. 
Note however, if we allowed for more general linear drift parts in ~\eqref{eq:wishart1} as for instance in the finite dimensional setting of \cite{CFMT:11}, then $\mathcal{A}$ could potentially also be more general. 

Next, we consider the $L_2(H)$-valued martingale $M_t:=\int_0^{t} (Y_s^*Y_s)^{-\frac{1}{2}}Y_s^* \sqrt{\cQ}(d B_s^*)$, $t\geq 0$. One may verify that the quadratic covariation of $M$ is given by 
\begin{align*} 
& \E \left[ \langle M_t, H_1 \rangle_{L_2(H)} \langle M_t, H_2 \rangle_{L_2(H)} \right]
\\ & \quad 
= \E \int_{0}^{t} \langle \cQ( Y_s (Y_s^*Y_s)^{-\frac{1}{2}}P_s H_1), Y_s (Y_s^*Y_s)^{-\frac{1}{2}} P_s H_2 \rangle_{L_2(H,K)} \,ds
\end{align*}
for all $H_1,H_2 \in L_2(H)$ (here $P_s$ is again the orthogonal projection onto $\operatorname{range}(Y_s)$). Thus, in order to be able to conclude that $M$ is a  Brownian motion, we need that $\cQ(C)=CQ$ for some operator $Q \in L_2(H)$.
Finally, if indeed $\cA(C)= CA$ and $\cQ(C)=CQ$ for some $A,Q\in L(H)$, then Equation~\eqref{eq:Lp(L2)} implies that 
\begin{equation}\label{eq:L2estSG}
 \int_{0}^{T} \| \e^{s \cA} \cQ \|_{L_2(L_2(H,K))}^2 \,ds
= \| \id_{K} \|_{L_2(K)}^2
\int_{0}^{T} \|\e^{sA} Q\|_{L_2(H)}^2 \,ds,
\end{equation}
for all $s\geq 0$, which can only be finite if $K$ is finite-dimensional (note that taking $A$ to be unbounded will not help here). As the integral in~\eqref{eq:L2estSG} must be finite in order for $Y$ to be well-defined (see~\eqref{eq:solY}), it seems that the setting we consider in Theorem~\eqref{thm:existence} is indeed the most general. 

\end{remark}

\section{Fourier and Laplace transforms of infinite-dimensional Wishart processes}\label{sec:Riccati}

The goal of this section is to derive expressions for the Fourier-Laplace transform of an infinite-dimensional Wishart process, see Theorem~\ref{thm:laplace_transform} and Corollary~\ref{cor:laplace_general}. 
For Corollary~\ref{cor:laplace_general} we calculate the Laplace transform of the solution we constructed for the proof of Theorem~\ref{thm:existence}, i.e., the Laplace transform of the process $X=Y^*Y$ where $Y$ is an appropriately chosen $L_2(H,\R^n)$-valued Ornstein-Uhlenbeck process (see p.~\pageref{p:X=Y*Y}). In particular, we exploit that we know that $X$ is of this particular form.\par  
On the other hand, in Theorem~\ref{thm:laplace_transform} we do \emph{not} assume we know that $X=Y^*Y$, we merely assume we know that $X$ is a Wishart process (i.e., a solution to~\eqref{eq:wishart_laplace} below). From this, we derive Riccati equations for the Fourier-Laplace transform, and then provide solutions to these Riccati equations. Naturally, the formulas derived in Corollary~\ref{cor:laplace_general} and Theorem~\ref{thm:laplace_transform} coincide on the intersection of their domains; Corollary~\ref{cor:laplace_general} is relevant because it provides the Fourier-Laplace transform $\E[ \exp(\tr(X_t (u+iv)))]$ for in $u,v\in S(H)$ provided that $t$ is sufficiently small, whereas Theorem~\ref{thm:laplace_transform} is essential for the characterisation of Wishart processes (see Section~\ref{sec:characterization}). In addition, as a corollary of Theorem~\ref{thm:laplace_transform} we obtain that any Wishart process has the Markov property, is unique in law, and is an affine process, see Corollaries~\ref{cor:Wishart_Markov} and~\ref{cor:wishart_uniqueinlaw}.
%
%
%
%
We close this section with some examples, see Subsection~\ref{subs:examples_laplacetrans}.

In order to present our results  we must introduce some notation: for a real Hilbert space $H$ we denote its complexification by $H_{\C}$, i.e., $H_{\C}=H \oplus iH$ endowed with 
$\langle h_1+ig_1, h_2 + i g_2\rangle_{H_{\C}} = \langle h_1, h_2\rangle_H + \langle g_1, g_2 \rangle_H +i (\langle g_1, h_2\rangle_H - \langle h_1, g_2\rangle_H)$, $h_1,h_2,g_1,g_2\in H$. Moreover, we define $\Re(h+ig):=h$, $\Im(h+ig) := g$, and $\overline{h+ig} = h-ig$ for all $h,g\in H$ (note that $\langle h, g \rangle_{H_{\C}} = \langle \bar{g}, \bar{h} \rangle_{H_{\C}}$). For $A\in L(H_{\C})$
we define the transpose $A^T\in L(H_{\C})$
by $\langle A^T h, g\rangle_{H_{\C}} = \langle A \bar{g}, \bar{h} \rangle_{H_{\C}}$, $h,g\in H_{\C}$. Note that in contrast to $A^*$, the operator $A^{\top}$ corresponds to the transpose without conjugation.


Note that any $A\in L(H)$ extends in a trivial and norm-conserving way to an operator $\tilde{A}\in L(H_{\C})$ by setting $\tilde{A}(h + ig)=Ah+iAg$, $h,g\in H$. Moreover, if $A\in S(H)$, then $\tilde{A}^T=\tilde{A}=\tilde{A}^*$, in particular, $\tilde{A}\in S(H_{\C})$ (from now on we do not distinguish between $A$ and $\tilde{A}$). In this section we frequently encounter the set $S^+(H)\oplus iS(H)\subseteq L(H_{\C})$, which
denote the operators $A\in L(H_{\C})$ for which there exist (necessarily unique) $A_1 \in S^+(H)$, $A_2\in S(H)$ such that $A=A_1 + i A_2$. The set $S(H)\oplus i S(H)$ is defined analogously. Note that if $A\in S(H)\oplus iS(H)$, then $A^{T}=A$.
\label{p:Hcomplexification}

\subsection{Establishing the Fourier and Laplace transforms}

Theorem~\ref{thm:laplace_transform} below states the main result of this section, a related result can be found in Corollary~\ref{cor:laplace_general} below.

\begin{theorem}\label{thm:laplace_transform}
Let $H$ be a separable real Hilbert space, let $(\Omega,\cF,\P,(\cF_t)_{t\geq 0})$ be a filtered probability space, let $A\colon D(A)\subset H \rightarrow H$ be the generator of a $C_0$-semigroup, 
let $Q\in S^+(H)$, let $\alpha \in \R$, let $x_0\in L^1((\Omega,\cF_0,\P),S_1^+(H))$,
and assume 
\begin{equation}\label{eq:ass_integrability2}
\int_{0}^{t}\| \e^{sA} \sqrt{Q}\|_{L_2(H)}^2\,ds < \infty
\end{equation} 
for all $t\geq 0$.
Assume moreover that there exists an $L_2(H)$-cylindrical $(\cF_t)_{t\geq 0}$-Brownian motion $W$ and an adapted stochastic process $X\colon [0,\infty)\times \Omega \rightarrow S^+_1(H)$ with continuous sample paths satisfying
\begin{equation}\label{eq:solX_laplace}
\begin{aligned}
 \langle X_t g, h\rangle_{H} 
 & = 
 \langle x_0 g, h \rangle_{H}
 + 
 \int_0^{t} 
 (\alpha \langle Q g, h \rangle_H 
 + \langle X_s A g, h \rangle_{H} 
 + \langle X_s g, Ah \rangle_H ) \,ds
 \\ & \quad 
 + \int_{0}^{t} \langle \sqrt{X}_s \,dW_s \sqrt{Q}g, h\rangle_H + \int_0^t \langle \sqrt{Q} \,dW^*_s \sqrt{X}_s g, h \rangle_H
\end{aligned}
\end{equation}
for all $t\geq 0$ and all $h,g\in D(A)$. Set\footnote{Note that $A^*$ generates the $C_0$-semigroup $\e^{tA^*} = (\e^{tA})^*$, see~\cite[Section I.5.14 and II.2.6]{EngelNagel:2000}, and thus~\eqref{eq:ass_integrability2} ensures that $Q_t\in S_1(H)$.}
\begin{align}\label{eq:Qtdef}
 Q_t = \int_{0}^{t} \e^{sA^*}Q \e^{sA}\,ds,\quad t\geq 0.
\end{align}
Let $t>0$, $u \in S^+(H)$, and $v\in S(H)$,
then 
\begin{align}\label{eq:tr(psiQ)integrable}
| \exp ( - \tr(\psi(t,u-iv)X_t))| & \leq 1,
& \int_0^{t} |\tr(\psi(s,u+iv)Q)|\,ds &<\infty,
\end{align}
and we have
\begin{equation}\label{eq:Xisaffine}
\E \left[ \exp( -\tr((u-iv)X_t)) \,|\, x_0\right]
=
\exp\left( -\tr(\psi(t,u-iv)x_0) - \alpha \int_{0}^{t}\tr(\psi(s,u-iv)Q)\,ds \right)
\end{equation}
in each of the following three cases (note that we make the implicit assertion that the inverses appearing below exist; see also Proposition~\ref{prop:riccati} below):
\begin{enumerate}
\item \label{it:laplace_transform:laplace} $v=0$, $u\in S^+(H)$, and
\begin{align}\label{eq:def_psi:laplace}
   \psi(s,u) &= \e^{sA}\sqrt{u}(\id_H+2\sqrt{u}Q_s \sqrt{u})^{-1}\sqrt{u} \e^{sA^*}, \quad s\in [0,t];
\end{align}
\item \label{it:laplace_transform:fourier} $u=0$, $v \in S^+(H)\cup S^{-}(H)$, and 
\begin{align}\label{eq:def_psi:fourier}
   \psi(s,-iv) &= \e^{sA}\sqrt{|v|}(i\operatorname{sign}(v)\id_{H_{\C}}+2\sqrt{|v|}Q_s \sqrt{|v|})^{-1}\sqrt{|v|} \e^{sA^*}, \quad s\in [0,t];
\end{align}
\item \label{it:laplace_transform:laplace_fourierComm} $A$, $u$, $v$, and $Q$ are jointly diagonizable, i.e., there exists an orthonormal basis $(e_n)_{n \in \N}$ for $H$ and sequences $(a_n)_{n\in \N}$ in $\R$, $(u_n)_{n\in N}$ in $[0,\infty)$, $(v_n)_{n\in N}$ in $\R$, $(q_n)_{n\in \N} $ in $[0,\infty)$ such that $A = \sum_{n\in \N} a_n e_n \otimes e_n$, $u-iv=\sum_{n\in \N} (u_n -iv_n) e_n \otimes e_n$, and $Q=\sum_{n\in \N} q_n e_n \otimes e_n$, and
\begin{equation}
\begin{aligned}\label{eq:def_psi:diagonizable}
   \psi(s,u-iv) &= \e^{sA}(\id_{H_{\C}}+2(u-iv)Q_s)^{-1}(u-iv) \e^{sA^*}
   \\ & = \sum_{n\in \N}  
     \frac{b_n e^{2a_n t}\left(a_n^2+ a_n (u_n+iv_n) q_n(e^{2a_n t} -1)\right)}
     {|a_n + (u_n-iv_n) q_n (e^{2a_n t} -1)|^2} ( e_n\otimes e_n), 
     \quad t \geq 0.
\end{aligned}
\end{equation}
\end{enumerate}  
\end{theorem}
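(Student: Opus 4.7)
The strategy is classical: we construct a bounded complex martingale
\[
 M_s := \exp\!\Bigl(-\tr\bigl(\psi(t-s,u-iv) X_s\bigr) - \alpha\!\int_{s}^{t}\!\tr\bigl(\psi(t-r,u-iv)Q\bigr)\,dr\Bigr),\qquad s\in[0,t],
\]
so that the endpoint identity $\E[M_t\mid x_0] = M_0$ is, after pulling the $\cF_0$-measurable factor $\exp(-\tr(\psi(t,u-iv)x_0))$ out of the conditional expectation, exactly \eqref{eq:Xisaffine}. A formal application of It\^o's formula using the SDE \eqref{eq:solX_laplace}, the Wishart quadratic variation, and the cyclicity of the trace shows that the finite-variation part of $\log M_s$ equals
\[
 \tr\Bigl(\bigl[\partial_s\psi - A\psi - \psi A^* + 2\psi Q\psi\bigr](t-s,u-iv)\,X_s\Bigr)\,ds,
\]
so the martingale property hinges on $\psi(\cdot,w)$ solving the operator Riccati equation
\begin{equation}\label{eq:plan_riccati}
 \partial_s \psi(s,w) = A\psi(s,w) + \psi(s,w)A^* - 2\psi(s,w) Q \psi(s,w),\qquad \psi(0,w) = w,
\end{equation}
with $w = u-iv$.

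Checking \eqref{eq:plan_riccati} for each of the three formulas is a direct computation: writing $\psi(s,w) = \e^{sA} N(s) \e^{sA^*}$ with $N(s)$ the middle factor and using $\dot Q_s = \e^{sA^*} Q \e^{sA}$ gives $\dot N(s) = -2\,N(s)\,\e^{sA^*} Q \e^{sA}\,N(s)$, which upon reassembly is \eqref{eq:plan_riccati}. Invertibility of the central operators is immediate: $\id_H+2\sqrt{u}Q_s\sqrt{u}\geq \id_H$ in \ref{it:laplace_transform:laplace}; the operator $i\sign(v)\id_{H_\C}+2\sqrt{|v|}Q_s\sqrt{|v|}$ has spectrum contained in $i\sign(v)+[0,\infty)$ and hence a bounded inverse of norm at most $1$ in \ref{it:laplace_transform:fourier}; and in \ref{it:laplace_transform:laplace_fourierComm} the equation reduces to a family of scalar Riccati equations whose denominators are easily seen to be nonzero. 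The bounds \eqref{eq:tr(psiQ)integrable} and $|M_s|\leq 1$ follow from these representations: $\psi(s,u)\in S_1^+(H)$ in \ref{it:laplace_transform:laplace}, while in \ref{it:laplace_transform:fourier} and \ref{it:laplace_transform:laplace_fourierComm} diagonalising the relevant commuting operators exhibits $\Re\psi(s,u-iv)\in S_1^+(H)$ with controlled trace norm, whence $\Re\tr(\psi(s,u-iv)X_s)\geq 0$ and $|\tr(\psi(s,u-iv)Q)|\leq \tr((\Re\psi)(s)Q)+|\tr((\Im\psi)(s)Q)|<\infty$.

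The principal obstacle is rigorously justifying the It\^o computation: \eqref{eq:solX_laplace} is only known in its weak form against $g,h\in D(A)$, whereas $\psi(s)$ involves the (possibly unbounded) semigroups $\e^{sA}$ and $\e^{sA^*}$ and so does not a priori produce test operators with range in $D(A^*)$. To circumvent this I would approximate in the initial datum $w=u-iv$ using Yosida-type regularisations $w_n = R_n w R_n^*$, with resolvents $R_n = n(n-A)^{-1}\in L(H)$ satisfying $R_n\to \id_H$ strongly and $R_n(H)\subseteq D(A)$. A singular value expansion of the middle factor then exhibits $\psi(s,w_n)$ as a norm-convergent sum of rank-one operators $f_k \otimes g_k$ with $f_k,g_k\in D(A^*)$, so \eqref{eq:solX_laplace} rigorously yields the semimartingale decomposition of $\tr(\psi(s,w_n) X_s)$; the argument of the first paragraph then produces bounded complex martingales $M^{(n)}$ with $\E[M^{(n)}_t\mid x_0] = M^{(n)}_0$.

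Letting $n\to\infty$, the strong convergence $R_n\to\id_H$, the explicit formulas for $\psi$, and the integrability hypothesis \eqref{eq:ass_integrability2} yield $\psi(s,w_n)\to\psi(s,w)$ in the strong operator topology with uniformly bounded trace norms on $[0,t]$; combined with $|M^{(n)}_s|\leq 1$, Lebesgue's theorem allows passing the identity $\E[M^{(n)}_t\mid x_0]=M^{(n)}_0$ to the limit, giving \eqref{eq:Xisaffine}.
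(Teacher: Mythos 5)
Your overall strategy matches the paper's: write down the candidate martingale $Z_s=\exp(-\tr(\psi(t-s)X_s)-\phi(t-s))$, check via Itô that the drift cancels if $\psi$ solves the operator Riccati equation, verify that the explicit formulas in \ref{it:laplace_transform:laplace}--\ref{it:laplace_transform:laplace_fourierComm} do solve it (this is indeed the content of Proposition~\ref{prop:riccati}), and then take the conditional expectation at the endpoints. However, the regularisation scheme you propose to make the Itô step rigorous has two genuine gaps, and they concern precisely the part you yourself flag as ``the principal obstacle''.

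First, Yosida regularisation of the initial datum $w_n = R_n w R_n^*$ does \emph{not} place $\sqrt{w_n}$ in a position to interact nicely with $A$. The range of $w_n$ lies in $D(A)$, but $\sqrt{w_n}$ is obtained by spectral calculus, and its range is only contained in $\overline{\operatorname{range}(w_n)}$, not in $D(A)$; equivalently, $\sqrt{R_n w R_n^*}$ is not of the form $R_n(\cdot)$ unless $A$ happens to be self-adjoint (so that $R_n=R_n^*$). The paper instead approximates the initial datum by $u_n=(P_n\sqrt{u}P_n)^2$ with $P_n$ a \emph{finite-rank} orthogonal projection onto $\operatorname{span}(h_1,\dots,h_n)\subseteq D(A^2)$. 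This choice is engineered precisely so that $\sqrt{u_n}=P_n\sqrt{u}P_n$ is a finite-rank operator with range in $D(A^2)$, which is what guarantees $\psi(\cdot,u_n)$, $A\psi(\cdot,u_n)$, $\psi(\cdot,u_n)A^*$ and $A\psi(\cdot,u_n)A^*$ are all $C^1$ in $L_1(H_\C)$ (Lemma~\ref{lem:approx_riccati}). Your $w_n$ achieves neither finite rank nor this range condition.

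Second, even granting a nice $\psi(\cdot,w_n)$, you cannot apply Itô's formula to $\tr(\psi(t-s,w_n)X_s)$ simply by invoking a singular-value expansion. The SVD of $\psi(s,w_n)$ produces a countably infinite sum of rank-one operators whose vectors \emph{vary with} $s$, so the decomposition does not commute with the semimartingale dynamics in~\eqref{eq:solX_laplace} (which is stated only for \emph{fixed} $g,h\in D(A)$). The paper handles this with a second, independent layer of approximation: $\psi_m(s)=R(\lambda,A)P_m(\lambda-A)\psi(s)(\lambda-A^*)P_mR(\lambda,A^*)$, which represents $\psi_m(s)$ as a \emph{finite} sum $\sum_{k,\ell\le m}f_{k,\ell}(s)\,R(\lambda,A)h_k\otimes R(\lambda,A)h_\ell$ with $s$-dependence confined to the \emph{scalar} coefficients and the vectors fixed in $D(A)$. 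Only this form makes each summand a bona fide scalar semimartingale to which the finite-dimensional Itô formula applies; the limit $m\to\infty$ is then controlled via Lemma~\ref{lem:approxPsi}, which delivers uniform $L_1$-bounds on $\psi_m$, $A\psi_m$, $\psi_m A^*$ and strong-operator-topology convergence. In short, two distinct approximations are needed and your single Yosida step performs neither.

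(One further minor slip: you write ``$f_k,g_k\in D(A^*)$''; the weak formulation~\eqref{eq:solX_laplace} requires test vectors in $D(A)$, not $D(A^*)$.)
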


We postpone the proof of Theorem~\ref{thm:laplace_transform} to the next subsection, and first consider some corollaries. Corollary~\ref{cor:Wishart_Markov} establishes the Markov property of solutions to \eqref{eq:solX_laplace} in the sense that the law of $X_t$ conditioned on the $\sigma$-algebra generated by 
$(X_r)_{r \in [0,s]}$, which we denote by $\cL[X_t | (X_r)_{r\in [0,s]}]$, only depends on the $\sigma$-algebra generated $X_s$. We deliberately do not introduce a state space for the Markov process at this point as this depends very much on the conditions on $\alpha, A$ and $Q$ (see Section \ref{sec:characterization} and Section \ref{sec:Feller}).

\begin{corollary}\label{cor:Wishart_Markov}
Assume the setting of Theorem~\ref{thm:laplace_transform}. Then $X$ has the Markov property, i.e., $\cL[X_t|(X_r)_{r\in [0,s]}] = \cL[X_t | X_s]$, and $X$ is an affine process in the sense that 
\[
\log(\E [ \exp(-\tr(uX_t)) | X_s ]), \quad u \in S^+(H),
\]
is an affine function in $X_s$ for every $s \leq t$.
\end{corollary}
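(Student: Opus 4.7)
The plan is to reduce both claims to the conditional Laplace transform, which I would compute by applying Theorem~\ref{thm:laplace_transform}\eqref{it:laplace_transform:laplace} to the time-shifted process. Since the family of Laplace transforms $u\mapsto \E[\exp(-\tr(uY))]$ with $u\in S^+(H)$ separates probability measures on $S_1^+(H)$ (by a standard argument using finite-rank $u$ to recover all finite-dimensional projections of $Y$), it will suffice to show that for every $u\in S^+(H)$ and every $0\leq s\leq t$ the conditional expectation $\E[\exp(-\tr(uX_t))|\cF_s]$ is a measurable function of $X_s$ alone; the affine assertion will then be immediate by inspection of the formula.

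First I would fix $0\leq s\leq t$ and set $\tilde X_r := X_{s+r}$ and $\tilde W_r := W_{s+r}-W_s$ for $r\geq 0$. Standard properties of cylindrical Brownian motion give that $\tilde W$ is an $L_2(H)$-cylindrical $(\cF_{s+r})_{r\geq 0}$-Brownian motion independent of $\cF_s$, and splitting the It\^o integrals in~\eqref{eq:solX_laplace} at time $s$ together with the adaptedness of the integrands shows that $\tilde X$ satisfies~\eqref{eq:solX_laplace} on the shifted filtration, with initial value $X_s$ (which is measurable with respect to $\cF_s$, the ``time-zero'' $\sigma$-algebra of the shift) and driving noise $\tilde W$. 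Applying Theorem~\ref{thm:laplace_transform}\eqref{it:laplace_transform:laplace} to $\tilde X$ at time $t-s$ with $u\in S^+(H)$ then yields
\begin{equation*}
\E\bigl[\exp(-\tr(uX_t))\,\big|\,\cF_s\bigr]
= \exp\Bigl(-\tr(\psi(t-s,u)X_s) - \alpha\int_0^{t-s}\tr(\psi(\tau,u)Q)\,d\tau\Bigr).
\end{equation*}
The right-hand side is a $\sigma(X_s)$-measurable function of $X_s$, so by the tower property the conditional expectation given the smaller $\sigma$-algebra $\sigma(X_r, r\in[0,s])$ yields the same expression, and the separating property of Laplace transforms upgrades this to the law-level equality $\cL[X_t|(X_r)_{r\in[0,s]}]=\cL[X_t|X_s]$, which is the Markov property. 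The affine property is then immediate: the logarithm of the right-hand side equals $-\tr(\psi(t-s,u)X_s)$ plus a constant independent of $X_s$, hence it is an affine (continuous-linear-plus-constant) functional of $X_s\in S_1(H)$.

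The principal technical hurdle is that Theorem~\ref{thm:laplace_transform} is formulated for initial conditions in $L^1((\Omega,\cF_0,\P),S_1^+(H))$, whereas a priori one only knows $X_s\in S_1^+(H)$ almost surely. I would handle this either by a localisation argument (stopping at $\tau_N:=\inf\{r\geq s:\|X_r\|_{L_1(H)}>N\}$, applying the theorem on $\{\tau_N>t\}$, and passing to $N\to\infty$ via dominated convergence, justified by the uniform bound in~\eqref{eq:tr(psiQ)integrable}), or by working with regular conditional distributions given $\cF_s$ under which $X_s$ becomes almost surely deterministic and the theorem applies directly. A remaining routine verification is that the It\^o integrals in~\eqref{eq:solX_laplace} split correctly at time $s$ into integrals against $\tilde W$ on $[0,r]$, which follows from the definition of the $L_2(H)$-cylindrical stochastic integral and the adaptedness of the integrands.
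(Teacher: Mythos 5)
Your proof is correct, and it takes a cleaner route than the paper's. You directly compute $\E[\exp(-\tr(uX_t))\,|\,\cF_s]$ via the time-shift $\tilde X_r = X_{s+r}$, $\tilde W_r = W_{s+r}-W_s$ and observe that the resulting expression is already $\sigma(X_s)$-measurable, so the Markov property (with respect to the full filtration, and a fortiori the natural one) drops out of the tower property alone. The paper instead reduces to verifying $\E\bigl[\E[\exp(-\tr(uX_t))\,|\,X_s]\,1_B(X_r)\bigr] = \E\bigl[\exp(-\tr(uX_t))\,1_B(X_r)\bigr]$ for $r<s<t$, and checking this requires two nested applications of Theorem~\ref{thm:laplace_transform} together with the semiflow identity $\psi(s-r,\psi(t-s,u))=\psi(t-r,u)$ from Proposition~\ref{prop:riccati}. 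Your approach avoids the flow property entirely, which is a genuine simplification. You also correctly flag a technical point the paper's proof glosses over: applying Theorem~\ref{thm:laplace_transform} to the shifted process requires $X_s\in L^1(\Omega,S_1^+(H))$, which is not immediate from the hypotheses (it is guaranteed only for the constructed solution of Theorem~\ref{thm:existence}, not for an arbitrary solution of \eqref{eq:solX_laplace}); your localisation or regular-conditional-distribution fix is the right way to close that gap, and it applies equally to the paper's argument.
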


\begin{proof}
The assertion concerning the affine property of $X$ simply follows from \eqref{eq:Xisaffine}.
Concerning the Markov property, note that the Stone-Weierstrass theorem implies that the law of an $S^+(\R^n)$-valued random variable $\tilde{X}$ is uniquely characterised by $\{\E[\e^{\tr(u \tilde{X})}] \colon u\in S^+(\R^n)\}$.
Moreover, the law of an $S^+(H)$-valued random variable $\bar{X}$ is uniquely characterised by the laws of $P_n^* \bar{X} P_n$, $n\in \N$, where $P_n\in L(\R^n,H)$, $n\in \N$, is a sequence of isometries satisfying $\lim_{n\rightarrow \infty} P_nP_n^*h =h$ for all $h\in H$. 
Thus $\cL[X_t|\cF]$, the law of $X_t$ given a $\sigma$-algebra $\cF$, is characterised by 
$\{\E[\e^{\tr(u X_t)} | \cF ] \colon u\in S^+(H)\}$.\par 
By the continuity of $X$ it suffices to consider only finitely many marginals, i.e.~to prove that $\cL(X_t|X_{s_1},\ldots,X_{s_n})=\cL(X_t|X_{s_n})$ for all $0\leq s_1 < \ldots < s_n < t$. Moreover, by the tower property of the conditional expectation this reduces to prove $\cL(X_t|X_{r}, X_{s})=\cL(X_t|X_{s})$ for all $0\leq r < s < t$. Finally, by the definition of the conditional expectation we thus only need to verify for all $u\in S^+(H)$, $0\leq r < s < t $, and $B\in \cB(S^+_1(H))$ that 
\begin{equation*}
    \E \left [
       \E [ \exp(-\tr(uX_t)) | X_s ] 1_{B}(X_r) 
    \right ]
    =    
    \E \left [
       \exp(-\tr(uX_t)) 1_{B}(X_r)
    \right ].
\end{equation*}
But this follows from Theorem~\ref{thm:laplace_transform}~\ref{it:laplace_transform:laplace} and the fact that $\psi(s-r,\psi(t-s,u))=\psi(t-r,u)$ for all $s,t\geq 0$ and all $u\in S^+(H)$, see also Proposition~\ref{prop:riccati}~\ref{it:riccati:unique} and~\ref{it:riccati:psi1} below (here $\psi$ is as defined in~\eqref{eq:def_psi:laplace}).
\end{proof}

\begin{corollary}\label{cor:wishart_uniqueinlaw}
Assume the setting of Theorem~\ref{thm:laplace_transform}. Then $X$ is unique in law.
\end{corollary}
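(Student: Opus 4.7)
The plan is to reduce uniqueness in law of the continuous process $X$ to uniqueness of the one-step transition kernel, and then to read off the transition kernel from the explicit Laplace transform in Theorem~\ref{thm:laplace_transform}\ref{it:laplace_transform:laplace}.

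First, since $X$ takes values in $S_1^+(H)$ and has continuous sample paths, its law on $C([0,\infty);S_1^+(H))$ is determined by the family of finite-dimensional marginals $\cL(X_{t_1},\ldots,X_{t_k})$ for $0 \le t_1 < \cdots < t_k$ and $k \in \N$. By the Markov property established in Corollary~\ref{cor:Wishart_Markov} and the tower property of conditional expectation, these marginals are in turn determined by the initial law $\cL(x_0)$ together with the one-step conditional laws $\cL(X_t \mid X_s)$ for $0 \le s < t$. Since $\cL(x_0)$ is fixed by the hypothesis, it suffices to show that each conditional law $\cL(X_t \mid X_s)$ is uniquely determined by $X_s$ and the parameters $A$, $Q$, $\alpha$.

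Second, recall from the proof of Corollary~\ref{cor:Wishart_Markov} that the law of an $S_1^+(H)$-valued random variable $Z$ is uniquely characterised by the collection $\{\E[\exp(-\tr(uZ))] : u \in S^+(H)\}$; the argument there passes through finite-dimensional compressions $P_n^*ZP_n$ and an application of the Stone--Weierstrass theorem. Consequently, to pin down $\cL(X_t \mid X_s)$ it is enough to specify $\E[\exp(-\tr(uX_t)) \mid X_s]$ for every $u \in S^+(H)$.

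Third, Theorem~\ref{thm:laplace_transform}\ref{it:laplace_transform:laplace}, combined with the Markov property and the semigroup identity $\psi(t-s,\psi(\cdot,u))$ already exploited in the proof of Corollary~\ref{cor:Wishart_Markov}, yields
\begin{equation*}
\E\!\left[\exp\bigl(-\tr(uX_t)\bigr) \,\big|\, X_s\right]
=
\exp\!\left(-\tr\bigl(\psi(t-s,u)X_s\bigr) - \alpha\int_0^{t-s}\tr\bigl(\psi(r,u)Q\bigr)\,dr\right),
\end{equation*}
where $\psi$ is given by \eqref{eq:def_psi:laplace}. The right-hand side is an explicit measurable function of $X_s$ depending only on $A$, $Q$, $\alpha$ and $u$, and thus $\cL(X_t \mid X_s)$ is uniquely determined. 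Combining the three steps above gives uniqueness of all finite-dimensional marginals, hence uniqueness in law of $X$. There is no substantial obstacle in this argument, since the work has already been done in Theorem~\ref{thm:laplace_transform} and Corollary~\ref{cor:Wishart_Markov}; the mild technical point to verify is merely that Theorem~\ref{thm:laplace_transform}\ref{it:laplace_transform:laplace} may be applied with the random initial condition $X_s$ in place of $x_0$, which is immediate from the Markov property.
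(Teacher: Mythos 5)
Your proof follows the same route as the paper's: reduce to finite-dimensional marginals via sample-path continuity, characterise laws through Laplace transforms tested against $u\in S^+(H)$ as in Corollary~\ref{cor:Wishart_Markov}, and conclude via the tower property, the Markov property, and the explicit transition Laplace transform from Theorem~\ref{thm:laplace_transform}. The only difference is that you spell out the one-step transition kernel explicitly, which the paper leaves implicit.
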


\begin{proof}
As $X$ has continuous sample paths, it suffices to verify that $( X_{t_k})_{k=0}^{m}$ is unique in law for all $m\in \N$ and all $0\leq t_0 \leq \ldots \leq t_m$.
Arguing as in the proof of Corollary~\ref{cor:Wishart_Markov}, we have that the law of $(X_{t_k})_{k=1}^{m}$ is determined by 
\begin{equation*}
    \left\{\E\left( \prod_{k=1}^{m}\exp(-\tr(u_k,X_{t_k}) \right) \colon u_1,\ldots, u_m\in S^+(H)\right\} 
\end{equation*} 
This together with the tower property of the conditional expectation, the Markov property of $X$ (see Corollary~\ref{cor:Wishart_Markov}), and Theorem~\ref{thm:laplace_transform} prove that $X$ is indeed unique in law.
\end{proof}
\begin{corollary}\label{cor:laplace_transform_simple}
Assume the setting of Theorem~\ref{thm:laplace_transform}, in particular, let $u\in S^+(H)$.
Then\footnote{For $B\in S^+_1(H)$ we have $\det(\id_H+B) = \prod_{\lambda \in \sigma(B)} (1+\lambda)^{d(\lambda)}$, where $d(\lambda)\in \N$ is the geometric multiplicity of $\lambda$. Note that $\log(\prod_{\lambda\in \sigma(B)}(1+\lambda)^{d(\lambda)}) = \sum_{\lambda \in \sigma(B)} d(\lambda)\log(1+\lambda) \leq \sum_{\lambda \in \sigma(B)} d(\lambda) \lambda = \| B \|_{L_1(H)}$.} $\sqrt{u}Q_t \sqrt{u}\in S^+_1(H)$, $\det(\id_H + 2 \sqrt{u}Q_t \sqrt{u}) \in [1,\infty)$, and
\begin{equation}\label{eq:laplace_transform_simple}
\begin{aligned}
&\E[ \exp(-\tr(uX_t)\,|\,x_0 ]
\\ & \qquad =
\det(I_H + 2 \sqrt{u} Q_t \sqrt{u})^{-\frac{\alpha}{2}}
\exp\left(-\tr\left(\e^{tA} \sqrt{u}(\id_H+ 2\sqrt{u} Q_t \sqrt{u} )^{-1} \sqrt{u}\e^{tA^*} x_0\right)\right).
\end{aligned}
\end{equation}
\end{corollary}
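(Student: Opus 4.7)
The plan is to invoke Theorem~\ref{thm:laplace_transform}\ref{it:laplace_transform:laplace} with $v=0$ and then identify the quantity $\alpha \int_0^t \tr(\psi(s,u)Q)\,ds$ as $\tfrac{\alpha}{2}\log\det(\id_H + 2\sqrt{u}Q_t \sqrt{u})$ via a Fredholm-determinant computation.

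First, I would verify the basic trace-class and determinant claims. Since $Q_t\in S_1^+(H)$ by \eqref{eq:ass_integrability2} and $\sqrt u\in L(H)$, the ideal property \eqref{eq:ideal} gives $\sqrt{u}Q_t\sqrt{u}\in S_1(H)$, and positivity is immediate from $\langle \sqrt u Q_t \sqrt u h,h\rangle_H = \langle Q_t \sqrt u h, \sqrt u h\rangle_H \geq 0$. Consequently the eigenvalues of $2\sqrt{u}Q_t\sqrt{u}$ are nonnegative and summable, so the Fredholm determinant $\det(\id_H + 2\sqrt{u}Q_t\sqrt{u}) = \prod_{\lambda}(1+\lambda)^{d(\lambda)}$ lies in $[1,\infty)$ as noted in the footnote, and in particular $\id_H + 2\sqrt{u}Q_t\sqrt{u}$ is invertible; this justifies that $\psi(t,u)$ in \eqref{eq:def_psi:laplace} is well-defined.

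Next, applying Theorem~\ref{thm:laplace_transform}\ref{it:laplace_transform:laplace}, it suffices to compute $\int_0^t \tr(\psi(s,u)Q)\,ds$. Set $B_s := \sqrt{u}Q_s\sqrt{u}\in S_1^+(H)$. Using \eqref{eq:Qtdef} and the cyclicity of the trace (justified via \eqref{eq:SchattenHolder} since the operators inside are Hilbert--Schmidt/trace-class in the right places),
\begin{align*}
\tr(\psi(s,u)Q)
&= \tr\bigl(\sqrt{u}\,\e^{sA^*}Q\e^{sA}\sqrt{u}\,(\id_H+2B_s)^{-1}\bigr)
= \tr\bigl(B_s'\,(\id_H+2B_s)^{-1}\bigr),
\end{align*}
where $B_s' = \tfrac{d}{ds}B_s = \sqrt{u}\,\e^{sA^*}Q\e^{sA}\sqrt{u}$ (differentiability in $S_1(H)$ follows from \eqref{eq:ass_integrability2}). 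I would then invoke the infinite-dimensional Jacobi formula for the Fredholm determinant: whenever $s\mapsto T_s\in S_1(H)$ is $C^1$ and $\id_H + T_s$ is invertible, one has
\begin{equation*}
\frac{d}{ds}\log\det(\id_H + T_s) = \tr\bigl((\id_H+T_s)^{-1}T_s'\bigr).
\end{equation*}
Applied to $T_s = 2B_s$ this yields $\tr(\psi(s,u)Q) = \tfrac12 \tfrac{d}{ds}\log\det(\id_H + 2B_s)$.

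Finally, integrating from $0$ to $t$ and using $B_0=0$, hence $\det(\id_H)=1$, gives
\begin{equation*}
\alpha\int_0^t \tr(\psi(s,u)Q)\,ds = \tfrac{\alpha}{2}\log\det(\id_H + 2\sqrt{u}Q_t\sqrt{u}),
\end{equation*}
and substitution into \eqref{eq:Xisaffine} together with the explicit form \eqref{eq:def_psi:laplace} of $\psi(t,u)$ yields \eqref{eq:laplace_transform_simple}. The main obstacle is the rigorous justification of the infinite-dimensional Jacobi formula; this can either be cited from a standard reference on Fredholm determinants (e.g.\ Simon's monograph on trace ideals) or proved directly by diagonalising the self-adjoint trace-class operator $B_s$ and interchanging the sum with $\tfrac{d}{ds}$, which is permissible because $(1+2\lambda_n(s))^{-1}\leq 1$ and $\sum_n |\lambda_n'(s)|$ is locally integrable on $[0,t]$ by the trace-class differentiability of $s\mapsto B_s$.
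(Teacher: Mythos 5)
Your proposal is correct and follows essentially the same route as the paper: invoke Theorem~\ref{thm:laplace_transform}\ref{it:laplace_transform:laplace} and then identify $\alpha\int_0^t \tr(\psi(s,u)Q)\,ds$ with $\tfrac{\alpha}{2}\log\det(\id_H+2\sqrt{u}Q_t\sqrt{u})$ via the Fredholm--Jacobi differentiation formula, using cyclicity of the trace and $Q_0=0$. The paper phrases it by differentiating $\tilde\phi(t)=-\tfrac{\alpha}{2}\log\det(\id_H+2\sqrt{u}Q_t\sqrt{u})$ and matching it to $-\alpha\tr(\psi(t,u)Q)$ rather than integrating as you do, but the content and the key identity are the same; your remark on justifying the Jacobi formula (via diagonalisation or a reference such as Simon's trace ideals monograph) is a useful addition that the paper leaves implicit.
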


\begin{proof}
As $Q_t \in S^+_1(H)$, we have $\sqrt{u}Q_t \sqrt{u}\in S^+_1(H)$, whence $\det(\id_H + 2 \sqrt{u}Q_t \sqrt{u}) \in [1,\infty)$. 
Now, define $\tilde{\phi}(t) = -\frac{\alpha}{2}\log(\det(\id_H + 2\sqrt{u} Q_t \sqrt{u} ))$. 
Note that $\tilde{\phi}(0)=0$, and 
recalling the definition of $\psi$ from~\eqref{eq:def_psi:laplace} in Theorem \ref{thm:laplace_transform}, and using that $\frac{d}{dt}\det(A_t) =\tr(A_t'A_t^{-1}) \det(A_t)$ whenever $[0,T]\ni t\mapsto A_t \in S^+_1(H)$ is differentiable, we obtain
$
\frac{d}{dt}\tilde{\phi}(t)
=
-\alpha \tr( \psi(t,u)Q).
$
This implies 
\begin{equation}\label{eq:simplephi}
 \e^{-\alpha \int_0^t \tr( \psi(s,u)Q)ds} \,
 = \det(I_H + 2 \sqrt{u} Q_t \sqrt{u})^{-\frac{\alpha}{2}}. 
\end{equation}
\end{proof}

\begin{remark}\label{rem:laplace_transform_limitations}
Note that~\eqref{eq:simplephi} may not hold if we replace $\psi(s,u)$ by $\psi(s,u+iv)$ as the logarithm is not uniquely defined on $\C$, for details see~\cite{Mayerhofer:2019}.
\end{remark}

Recall from the proof of Theorem~\ref{thm:existence} that within the setting of this proposition, one can construct a process $X$ satisfying~\eqref{eq:solX_laplace} with $\alpha=n\in \N$ by considering $X=Y^*Y$, where $Y$ is an $L_2(H,\R^n)$-valued Ornstein-Uhlenbeck process. For this explicit construction of $X$ the Laplace transform can be calculated directly (i.e., without relying on Theorem~\ref{thm:laplace_transform}), see Proposition~\ref{thm:laplace_transform_OU} below (note that~\eqref{eq:laplace_transform_simple} indeed coincides with~\eqref{eq:Laplacetrafoexplicit} on the intersection of the respective parameter ranges). This proposition allows us to  conclude that $\E[ \e^{-\tr(BX_t)} ]<\infty$ provided \begin{equation} 
\inf\{\Re(\lambda)\colon \lambda\in \sigma(B)\} > \tfrac{1}{2} \| Q_t \|_{L(H)}^{-1}.
\end{equation} 
In particular, while Theorem~\ref{thm:laplace_transform} only provides $\E[ \e^{-\tr(BX_t)} ]$ when $\Re(\sigma(B))\geq 0$, Corollary~\ref{cor:laplace_general} below provides, in the setting of Theorem~\ref{thm:existence}, that $\E[ \e^{-\tr(BX_t)} ]$ for some $B$ for which $\Re(\sigma(B))\ngeq 0$. Note that Theorem~\ref{thm:laplace_transform} in not actually used for the proof of Proposition~\ref{thm:laplace_transform_OU} and Corollary~\ref{cor:laplace_general}.

\begin{proposition}\label{thm:laplace_transform_OU}
Assume the setting of Theorem~\ref{thm:existence} and let $X$ be the probabilistically and analytically weak solution to~\eqref{eq:wishart1} constructed in the proof of Theorem~\ref{thm:existence}, see page~\pageref{p:X=Y*Y}.
Let $(Q_t)_{t\geq 0}$ be as defined in~\eqref{eq:Qtdef}. Assume in addition that one of the following two conditions is satisfied:
\begin{enumerate}
\item $u\in S^+(H)$, 
\item $u\in S^{-}(H)$ and 
$\| u \|_{L(H)} \leq \frac{1}{2} \| Q_t \|_{L(H)}$.
\end{enumerate}
Then
$\id_H + 2 \sign(u) \sqrt{|u|}Q_t \sqrt{|u|}$ is invertible and 
  \begin{equation}
    \begin{split}\label{eq:Laplacetrafoexplicit}
&\mathbb{E}\left[\exp(
 - \tr( u X_t)
 )\right]\\
&\qquad=
\det(\id_H + 2 \sign(u) \sqrt{|u|} Q_t \sqrt{|u|})^{-\frac{n}{2}}
\\ 
& \qquad \qquad \times 
\exp\left( -\tr(   
  \e^{tA} \sqrt{|u|} (\id_H + 2 \sign(u)\sqrt{|u|} Q_t \sqrt{|u|})^{-1}\e^{tA} x_0)
 \right)
    \end{split}
  \end{equation}
where $\det(\id_H +  2\operatorname{sign}(u) \sqrt{|u|} Q_t \sqrt{|u|})$ is defined as in~\eqref{eq:def_det}.
\end{proposition}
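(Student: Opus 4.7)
The plan is to exploit the explicit representation $X_t = Y_t^{\ast} Y_t$ from the construction in the proof of Theorem \ref{thm:existence}. As noted in Remark \ref{rem:OU_alternative}, setting $Y_t^{(i)} := Y_t^{\ast} e_i$ for an orthonormal basis $(e_i)_{i=1}^n$ of $\R^n$ yields $n$ independent $H$-valued Ornstein-Uhlenbeck processes which, conditionally on $\cF_0$, satisfy $Y_t^{(i)} \sim N(\e^{tA^{\ast}} y_0^{(i)}, Q_t)$, where $Q_t \in S_1^+(H)$ by the integrability assumption. By cyclicity of the trace,
\[
\tr(u X_t) \;=\; \tr(Y_t\, u\, Y_t^{\ast}) \;=\; \sum_{i=1}^{n} \langle u\, Y_t^{(i)}, Y_t^{(i)} \rangle_H,
\]
so the conditional Laplace transform factorises over $i$.

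The core ingredient is the Laplace transform for a Gaussian quadratic form on a Hilbert space: for $Z \sim N(m, \Sigma)$ in $H$ with $\Sigma \in S_1^+(H)$ and $u \in S(H)$ such that $\id_H + 2 u \Sigma$ is boundedly invertible and $\E[\exp(-\langle uZ, Z\rangle_H)] < \infty$,
\[
\E\bigl[\exp(-\langle u Z, Z\rangle_H)\bigr] \;=\; \det(\id_H + 2 u \Sigma)^{-\frac{1}{2}} \exp\!\bigl(-\langle (\id_H + 2 u \Sigma)^{-1} u\, m, m\rangle_H\bigr),
\]
where the determinant is the Fredholm determinant. I would establish this by diagonalising $\Sigma$ via its Karhunen-Loeve expansion, projecting onto $N$-dimensional subspaces on which the quadratic form becomes a non-degenerate finite-dimensional Gaussian integral, applying the classical finite-dimensional formula, and passing to the limit via dominated convergence together with continuity of the Fredholm determinant. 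In case \textup{(i)}, $u \in S^+(H)$, all finiteness and invertibility conditions hold automatically. In case \textup{(ii)}, where $\exp(-\langle u Z, Z\rangle_H) = \exp(\langle |u|\, Z, Z\rangle_H)$, the norm condition on $u$ is precisely what ensures $\|\sqrt{|u|}\, Q_t\, \sqrt{|u|}\|_{L(H)} < \tfrac{1}{2}$, giving simultaneously the integrability of the exponential and the bounded invertibility of $\id_H - 2\sqrt{|u|}\, Q_t\, \sqrt{|u|}$.

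Applying the formula conditionally to each $Y_t^{(i)}$ with $\Sigma = Q_t$, taking the product over $i$, and using
\[
\sum_{i=1}^n \langle B\, \e^{tA^{\ast}} y_0^{(i)}, \e^{tA^{\ast}} y_0^{(i)}\rangle_H \;=\; \tr\bigl(\e^{tA} B\, \e^{tA^{\ast}}\, y_0^{\ast} y_0\bigr) \;=\; \tr\bigl(\e^{tA} B\, \e^{tA^{\ast}} x_0\bigr)
\]
for $B = (\id_H + 2 u Q_t)^{-1} u$, one arrives at
\[
\E\bigl[\exp(-\tr(u X_t))\bigm| x_0\bigr] \;=\; \det(\id_H + 2 u Q_t)^{-\frac{n}{2}}\, \exp\!\bigl(-\tr\bigl(\e^{tA} (\id_H + 2 u Q_t)^{-1} u\, \e^{tA^{\ast}} x_0\bigr)\bigr).
\]
To match \eqref{eq:Laplacetrafoexplicit}, I would use the factorisation $u = \sign(u)\sqrt{|u|}\sqrt{|u|}$ together with Sylvester's determinant identity to obtain
\[
\det(\id_H + 2 u Q_t) \;=\; \det\bigl(\id_H + 2\sign(u)\sqrt{|u|}\, Q_t \sqrt{|u|}\bigr),
\]
and a direct computation, multiplying out the proposed inverse, to verify
\[
(\id_H + 2 u Q_t)^{-1} u \;=\; \sqrt{|u|}\,\bigl(\id_H + 2\sign(u)\sqrt{|u|}\, Q_t \sqrt{|u|}\bigr)^{-1}\!\sign(u)\sqrt{|u|}.
\]
The main obstacle is the infinite-dimensional Gaussian quadratic form formula, together with, in case \textup{(ii)}, the verification of integrability and bounded invertibility under the operator norm bound; the operator-algebraic manipulations at the end are then essentially formal once the functional calculus of $u$ is in hand.
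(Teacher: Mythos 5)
Your proposal is correct and uses the same central idea — reduce to $X_t = Y_t^*Y_t$ and compute a Gaussian integral — but it differs in two concrete ways from the paper's proof in Appendix~\ref{app:Laplace-tarnsform}. First, the paper treats $Y_t\sqrt{|u|}$ as a single $L_2(H,\R^n)$-valued Gaussian (with covariance $\cQ(B)=B\sqrt{|u|}Q_t\sqrt{|u|}$ and mean $y_0\e^{tA}\sqrt{|u|}$), whereas you factorise over the $n$ independent $H$-valued OU coordinates $Y_t^{(i)}$ from Remark~\ref{rem:OU_alternative}; the power $-n/2$ then arises for you from the product over $i$, and for the paper from Lemma~\ref{lemma:Lp(L2)} applied to $\cQ$. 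Second — and this is the more substantive divergence — the paper absorbs $\sqrt{|u|}$ into the Gaussian \emph{before} applying its Lemma~\ref{lemma:expexpnorm}, which only handles the scalar case $\E[\e^{-\alpha\|Z+\mu\|^2}]$ for $\alpha\in\R$ (here $\alpha=\sign(u)=\pm1$). Your route instead invokes a strictly stronger lemma, $\E[\e^{-\langle uZ,Z\rangle}]$ for general $u\in S(H)$ not commuting with the covariance $\Sigma$, and consequently needs to (a) diagonalise $\sqrt{\Sigma}u\sqrt{\Sigma}$ rather than $\Sigma$ alone when you project, and (b) establish infinite-dimensional analogues of Lemma~\ref{lem:simple_inverse} — both that $\det(\id_H+2uQ_t)=\det(\id_H+2\sign(u)\sqrt{|u|}Q_t\sqrt{|u|})$ and that $(\id_H+2uQ_t)$ is invertible with the stated inverse acting on $u$ — neither of which the paper needs because its formula comes out directly in the symmetric form. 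The algebra underlying your inverse identity is sound (one verifies $(\id+2uQ_t)\sqrt{|u|}=\sqrt{|u|}(\id+2\sign(u)\sqrt{|u|}Q_t\sqrt{|u|})$ using that $\sign(u)$ commutes with $\sqrt{|u|}$), but you should state that the invertibility of $\id_H+2uQ_t$ itself requires an argument when $u$ has nontrivial kernel. Two smaller points: in case (ii) the bounded integrand argument requires monotone rather than dominated convergence (the integrand increases in the projection index), and the hypothesis should read $\|u\|_{L(H)}\leq\tfrac{1}{2}\|Q_t\|_{L(H)}^{-1}$ — a typo in the statement that you correctly interpreted. In sum: both routes are valid; the paper's is shorter because it only needs the scalar Gaussian-norm lemma, while yours is closer to the textbook quadratic-form formula at the cost of extra operator-algebra bookkeeping.
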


\begin{proof}
See Appendix~\ref{app:Laplace-tarnsform}.
\end{proof}

\begin{corollary}\label{cor:laplace_general}
Assume the setting of Theorem~\ref{thm:existence} and let $X$ be the probabilistically and analytically weak solution to~\eqref{eq:wishart1} constructed in the proof of Theorem~\ref{thm:existence}, see page~\pageref{p:X=Y*Y}. Let $\omega \in \R\setminus\{0\}$ and $M\in (0,\infty)$ be such that $\|\e^{tA}\|_{L(H)}\leq M\e^{\omega t}$ for all $t \geq 0$. Then for all $t \geq 0$ and all $u, v\in S(H)$ satisfying
\begin{equation}\label{eq:laplace_ball}
(\| u \|_{L(H)}^2 + \| v \|_{L(H)}^2)^{\frac{1}{2}} \|Q \|_{L(H)} < \tfrac{|\omega|}{M^2| \e^{2\omega t} - 1|}
\end{equation}
we have that $\id_{H_{\C}}+2(u-iv)Q_s$, $s\in [0,t]$, is invertible on $H_{\C}$, with inverse 
\begin{equation}\label{eq:inverse_in_LT}
(\id_{H_{\C}}+2(u-iv)Q_s)^{-1}= \sum_{k=0}^{\infty} (-2(u-iv)Q_s)^k,
\end{equation}
and~\eqref{eq:Xisaffine} holds when $\alpha \in \N$ with  
\begin{align}\label{eq:def_psi:laplace_mixed}
   \psi(s,u-iv) &= \e^{sA}(\id_H+2(u-iv)Q_s)^{-1}(u-iv) \e^{sA^*},  \quad s\in [0,t].
\end{align}
\end{corollary}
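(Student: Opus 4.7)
The plan is to exploit the explicit construction $X_t=Y_t^{*}Y_t$ from Theorem~\ref{thm:existence}, which reduces the Laplace transform to a product of Gaussian integrals in $H$, and then to extend Proposition~\ref{thm:laplace_transform_OU} from the real parameter $u\in S^{+}(H)$ to the complex parameter $u-iv$ by analytic continuation. The smallness condition~\eqref{eq:laplace_ball} is tailored precisely so that $\id_{H_{\C}}+2(u-iv)Q_s$ is invertible (through its Neumann expansion) and so that the complex Gaussian integrals arising below are absolutely convergent and depend holomorphically on the parameter.

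\emph{Step 1 (Invertibility).} Using $\|\e^{sA}\|_{L(H)}\leq Me^{\omega s}$ together with~\eqref{eq:ideal}, one obtains $\|Q_s\|_{L(H)}\leq M^{2}\|Q\|_{L(H)}|\e^{2\omega s}-1|/(2|\omega|)$ for all $s\in[0,t]$. Combined with the standard norm estimate for $u-iv$ acting on $H_{\C}$ and the smallness condition~\eqref{eq:laplace_ball}, this forces $\|2(u-iv)Q_s\|_{L(H_{\C})}<1$ for all $s\in[0,t]$, so that $\id_{H_{\C}}+2(u-iv)Q_s$ is invertible with the absolutely convergent Neumann series~\eqref{eq:inverse_in_LT}.

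\emph{Step 2 (Gaussian reduction and complex-parameter extension).} Following Remark~\ref{rem:OU_alternative}, write $X_t=\sum_{i=1}^{n}Y_t^{(i)}\otimes Y_t^{(i)}$, where, conditionally on $y_0$, the $Y_t^{(i)}$ are independent $H$-valued Gaussians with means $m_t^{(i)}=\e^{tA^{*}}y_0^{(i)}$ and common covariance $Q_t$. Conditioning on $y_0$ factorizes the Laplace transform as
\begin{equation*}
 \E[\exp(-\tr((u-iv)X_t))\mid y_0]=\prod_{i=1}^{n}\E\bigl[\exp\bigl(-\langle(u-iv)Y_t^{(i)},Y_t^{(i)}\rangle_{H_{\C}}\bigr)\,\big|\,y_0^{(i)}\bigr].
\end{equation*}
Each factor is a Gaussian integral with an $H_{\C}$-transpose-symmetric (but non-self-adjoint on $H$) quadratic form, since $(u-iv)^{T}=u-iv$ on $H_{\C}$; see p.~\pageref{p:Hcomplexification}. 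For $v=0$ and $u\in S^{+}(H)$ satisfying the smallness hypothesis, Proposition~\ref{thm:laplace_transform_OU} evaluates this Gaussian integral to $\det(\id+2uQ_t)^{-1/2}\exp(-\langle(\id+2uQ_t)^{-1}u\,m_t^{(i)},m_t^{(i)}\rangle)$, matching the $v=0$ specialization of the claimed formula. To pass to complex $B=u-iv$, both sides are viewed as holomorphic functions of the operator parameter $B$ in the complexification $S(H)\oplus iS(H)$: the left-hand side is holomorphic because the integrand is pointwise holomorphic in $B$ and is uniformly dominated by $\exp(-\tr(\Re(B)X_t))$, which is integrable uniformly on compact subsets of the ball defined by~\eqref{eq:laplace_ball}; the right-hand side is holomorphic in $B$ via the Neumann expansion of $(\id+2BQ_t)^{-1}$ and of the Fredholm determinant $\det(\id+2BQ_t)$. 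The two holomorphic extensions coincide on the totally real subset $S^{+}(H)$ by Proposition~\ref{thm:laplace_transform_OU}, so by the identity theorem for holomorphic functions on a complex Banach space they coincide throughout the connected open set, in particular at $B=u-iv$.

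\emph{Step 3 (Assembly).} Multiplying the $n$ Gaussian factors and using $x_0=y_0^{*}y_0$ together with cyclicity of the trace gives
\begin{equation*}
 \sum_{i=1}^{n}\langle(\id+2(u-iv)Q_t)^{-1}(u-iv)m_t^{(i)},m_t^{(i)}\rangle=\tr(\psi(t,u-iv)x_0),
\end{equation*}
with $\psi$ as in~\eqref{eq:def_psi:laplace_mixed}. For the determinant factor I would apply the ODE identity $\frac{d}{ds}\det(\id+2(u-iv)Q_s)=2\det(\id+2(u-iv)Q_s)\tr(\psi(s,u-iv)Q)$, analogous to the computation in Corollary~\ref{cor:laplace_transform_simple}, whose unique solution with value $1$ at $s=0$ yields $\det(\id+2(u-iv)Q_t)^{-n/2}=\exp(-n\int_{0}^{t}\tr(\psi(s,u-iv)Q)\,ds)$, thereby sidestepping the branch ambiguity of Remark~\ref{rem:laplace_transform_limitations}. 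Conditioning on $y_0$ collapses to conditioning on $x_0=y_0^{*}y_0$ because the right-hand side depends on $y_0$ only through $x_0$; this gives~\eqref{eq:Xisaffine} with $\alpha=n$. The main obstacle is the analytic continuation in Step~2: one needs a clean joint holomorphy statement for the Fredholm determinant and the conditional expectation in the Banach-valued parameter $B$, together with a careful invocation of the identity theorem on the complexification $S(H)\oplus iS(H)$. A more concrete fallback is to approximate $Q_t$ by its finite-rank spectral truncations, perform the finite-dimensional Gaussian integral by elementary contour deformation in each eigendirection, and pass to the limit using the trace-class property of $Q_t$.
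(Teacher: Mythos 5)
Your proof is correct and follows the same essential route as the paper: Neumann-series invertibility of $\id_{H_{\C}}+2(u-iv)Q_s$, the Gaussian Laplace-transform formula for a real positive parameter, and analytic continuation in the parameter $B=u-iv$. The paper's proof is more modular: it simply invokes Proposition~\ref{thm:laplace_transform_OU} (whose domain, by part (ii) of that proposition, already contains the open set $\{u+iv\colon \sigma(u)\subseteq(-\tfrac{1}{2}\|Q_t\|^{-1}_{L(H)},\infty)\}$, so one can apply the identity theorem on an open subset of $S(H)\oplus iS(H)$ rather than continuing off a totally real slice), observes that the right-hand side $\psi$ in~\eqref{eq:def_psi:laplace_mixed} is analytic and agrees with~\eqref{eq:def_psi:laplace} by Lemma~\ref{lem:simple_inverse}, and then analytically continues~\eqref{eq:Xisaffine} directly, with the determinant factor never appearing in complex form (it is replaced from the outset by $\exp(-\alpha\int_0^t\tr(\psi(s,\cdot)Q)\,ds)$ via Corollary~\ref{cor:laplace_transform_simple}). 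Your Step~2 re-derives the Gaussian factorization of Remark~\ref{rem:OU_alternative} and Proposition~\ref{thm:laplace_transform_OU} from scratch, and your Step~3 re-establishes the identity between the Fredholm determinant power and $\exp(-n\int_0^t\tr(\psi Q)\,ds)$ via the ODE $\tfrac{d}{ds}\det(\id+2BQ_s)=2\det(\id+2BQ_s)\tr(\psi(s,B)Q)$, which is a clean way to fix the branch ambiguity of Remark~\ref{rem:laplace_transform_limitations}; the paper sidesteps this by never introducing the complex determinant in the first place. Both approaches are sound; yours is more self-contained, the paper's is shorter and leans on already-proved ingredients. One small point worth making precise in your Step~2: when invoking the identity theorem, note that Proposition~\ref{thm:laplace_transform_OU}(ii) gives equality on an open ball in $S(H)$ centered at $0$, so one may continue from the open set $\{B\in S(H)\oplus iS(H)\colon \Re(B)\in S(H),\ \sigma(\Re(B))\subseteq(-\tfrac{1}{2}\|Q_t\|^{-1}_{L(H)},\infty)\}$ rather than only from the totally real cone $S^{+}(H)$, which makes the application of the identity theorem in the Banach-space setting cleaner.
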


\begin{proof}
Note that $\| u+iv\|_{L(H_{\C})} = (\| u \|_{L(H)}^2 + \| v \|_{L(H)}^2)^{\frac{1}{2}}$, whence~\eqref{eq:laplace_ball} implies
\begin{align*}
\| (u+iv) Q_s \|_{L(H_{\C})}
& \leq \| u+iv\|_{L(H_{\C})} \| Q \|_{L(H)}
M^2 \int_{0}^{s} \e^{2\omega r}\,dr
< \tfrac{| \e^{2\omega s} - 1|}{2 |\e^{2\omega t} - 1|} \leq  \tfrac{1}{2}
\end{align*} 
for all $s\in [0,t]$. This implies that $\id_{H_{\C}} + 2(u-iv)Q_s$ is invertible
with inverse~\eqref{eq:inverse_in_LT}.\par
Proposition~\ref{thm:laplace_transform_OU} implies that the Laplace transform of $X_t$ is given by~\eqref{eq:Laplacetrafoexplicit}, i.e., with~\eqref{eq:Xisaffine} with $\psi$ as in~\eqref{eq:def_psi:laplace}.
Observe that $\psi$ as defined in~\eqref{eq:def_psi:laplace_mixed} is analytic in its second argument and coincides with $\psi$ as defined in~\eqref{eq:def_psi:laplace} on the intersection of their domains (see also Lemma~\ref{lem:simple_inverse}). On the other hand, Proposition~\ref{thm:laplace_transform_OU} implies that the domain of the analytic function $(S(H)+iS(H))\ni u+iv\mapsto \E[ \exp( -\tr((u+iv)X_t) )]$ contains the set $\{u+iv\colon u,v\in S(H), \sigma(u)\subseteq (-\frac{1}{2}\| Q_t\|_{L(H)},\infty) \}$.
This and the uniqueness of the analytic expansion imply that~\eqref{eq:Xisaffine} holds with $\psi$ as in~\eqref{eq:def_psi:laplace_mixed}.
\end{proof}

\subsection{Proof of Theorem~\ref{thm:laplace_transform}}
The approach for proving Theorem~\ref{thm:laplace_transform}
is classical in the context of affine processes,
it involves applying the It\^o formula to 
\begin{equation*} s\mapsto \exp(-\tr(\psi(t-s,u-iv)X_s)-\phi(t-s,u-iv)),
\end{equation*}
where in our case $\phi(t,u-iv) = \alpha \int_0^t \tr( \psi(s, u -i v) Q) ds$.
This allows one to derive a Riccati-type differential equation for $\psi$ (and $\phi$ can subsequently be recovered from $\psi$). In Proposition~\ref{prop:riccati} below we establish the existence of a solution to the Riccati equation for several situations. \par 
Recall that $H_{\C}$, 
$C^T$ for $C \in L(H_{\mathbb{C}})$, and $S^+(H)\oplus iS(H)$ were introduced on p.~\pageref{p:Hcomplexification}. We also note that we write $B_n \stackrel{\text{s.o.t.}}{\rightarrow} B $ when $(B_n)_{n\in\N}$ is a sequence of bounded linear operators converging to $B$ in the strong operator topology.

\begin{proposition}\label{prop:riccati}
Let $H$ be a real Hilbert space, let $Q\in S^+(H)$, let $A\colon D(A)\subset H \rightarrow H$ be the generator of a $C_0$-semigroup, let 
\begin{equation}\label{eq:integrabilityasspropriccati}
\int_{0}^{t} \| \e^{sA} \sqrt{Q} \|_{L_2(H)}^2 \,ds < \infty
\end{equation}
for all $t\geq 0 $ and let $(Q_t)_{t\geq 0}$ be defined by~\eqref{eq:Qtdef}.
Let $B\in L(H_{\C})$, and consider the following ordinary differential equation in $L(H_{\C})$:
\begin{equation}\label{eq:Riccati}
\left\{
\begin{aligned}
\tfrac{\partial}{\partial t}\psi(t,B)
& =
-\tfrac{1}{2}(\psi(t,B)+\psi^T(t,B))Q(\psi(t,B)+\psi^T(t,B))
\\ & \quad + A\psi(t,B) + \psi(t,B)A^*,\quad t \in [0,\infty);
\\ \psi(0,B)&=B.
\end{aligned}
\right.
\end{equation}
We say that $\psi(\cdot,B)\in C^{1}([0,\infty),L(H_{\C}))$ is a solution to~\eqref{eq:Riccati} if $A\psi(t,B), \psi(t,B)A^*\in L(H_{\C})$ for all $t\in [0,\infty)$ and moreover~\eqref{eq:Riccati} holds.
The following holds:
\begin{enumerate}
 \item\label{it:riccati:unique} any solution to~\eqref{eq:Riccati} is unique;
 \item\label{it:riccati:psi1} if $B\in S^{+}(H)$ then $\id_H + 2 \sqrt{B}Q_t \sqrt{B}$ is invertible and the mapping $t\mapsto (\id_H + 2\sqrt{B}Q_t \sqrt{B})^{-1}$ is continuously differentiable with derivative
 \begin{equation*}
     t\mapsto -2 (\id_H + 2\sqrt{B}Q_t \sqrt{B})^{-1} \sqrt{B} \e^{tA^*} Q \e^{tA} \sqrt{B} (\id_H + 2\sqrt{B}Q_t \sqrt{B})^{-1}.
 \end{equation*}
 Moreover the process $\psi(\cdot,B)$ given by~\eqref{eq:def_psi:laplace}, i.e.,
 \begin{equation}\label{eq:def_psi1_reminder}
     \psi(t,B) = \e^{tA} \sqrt{B} (\id_{H} + 2\sqrt{B}Q_t \sqrt{B})^{-1}\sqrt{B} \e^{tA^*}, \quad t\geq 0,
 \end{equation}
 satisfies
 \begin{align}\label{eq:psi1bounds}
 \| \psi(t,B)\|_{L(H)} 
 &
 \leq \| B \|_{L(H)}\| \e^{tA} \|^2_{L(H)} ,
 &
 \| \psi(t,B) Q \|_{L_1(H)} 
 &
 \leq \| B \|_{L(H)} \| \e^{tA}\sqrt{Q} \|_{L_2(H)}^2,
 \end{align}
 and  $\psi(t,B)\in S^+(H)$ for all $t\geq 0$.
 If in addition $A\sqrt{B} \in L(H)$ then $\psi(\cdot,B)$
 is a solution to~\eqref{eq:Riccati}, and if moreover $\sqrt{B}, A\sqrt{B}, A^2\sqrt{B} \in L_1(H)$, then $\psi, A\psi, \psi A^*, A\psi A^*\in C^1([0,\infty), L_1(H))$.
 \item\label{it:riccati:psi2} if 
$iB\in S^{+}(H)\cup S^{-}(H)$
 then $\operatorname{sign}(iB)\id_{H_\C} + 2i\sqrt{|B|}Q_t \sqrt{|B|}$ is invertible for all $t\geq 0$ and the mapping $t\mapsto (\operatorname{sign}(iB)\id_{H_\C} + 2i\sqrt{|B|}Q_t \sqrt{|B|})^{-1}$ is continuously differentiable with derivative
 \begin{equation*}
 t\mapsto -2i (\operatorname{sign}(iB)\id_{H_\C} + 2i\sqrt{|B|}Q_t \sqrt{|B|})^{-1} \sqrt{|B|} \e^{tA^*} Q \e^{tA}\sqrt{|B|} (\operatorname{sign}(iB)\id_{H_\C} + 2i\sqrt{|B|}Q_t \sqrt{|B|})^{-1}.
 \end{equation*}
 Moreover the process $\psi(\cdot,B)$ given by~\eqref{eq:def_psi:fourier} with $v=iB$, i.e., 
 \begin{equation}\label{eq:def_psi2_reminder}
 \psi(t,B) = \e^{tA} \sqrt{|B|} \left(i\operatorname{sign}(iB)\id_{H_{\C}} + 2\sqrt{|B|}Q_t \sqrt{|B|}\right)^{-1}\sqrt{|B|} \e^{tA^*}, \quad t\geq 0,
 \end{equation} 
 satisfies~\eqref{eq:psi1bounds} (with $H$ replaced by $H_\C$ in the norms whenever needed)
 and  $\psi(t,B)\in S^+(H)\oplus iS(H)$ for all $t\geq 0$.
 If in addition $A\sqrt{|B|}\in L(H)$, then
 $\psi(\cdot,B)$ 
 is a solution to~\eqref{eq:Riccati} satisfying $\psi(t,B)\in S^+(H_{\C})\oplus iS(H_{\C})$ for all $t\geq 0$, and if moreover $\sqrt{|B|}, A\sqrt{|B|}, A^2\sqrt{|B|} \in L_1(H)$, then $\psi, A\psi, \psi A^*, A\psi A^*\in C^1([0,\infty), L_1(H_\C))$.
\item\label{it:riccati:psi3} if $B\in S^+(H) \oplus iS(H)$ and $A$, $B$, and $Q$ are jointly diagonizable, 
i.e., if there exists an orthonormal basis $(e_n)_{n \in \N}$ for $H$ and sequences $(a_n)_{n\in \N}$ in $\R$, $(b_n)_{n\in N}$ in $[0,\infty)\times i\R$, $(q_n)_{n\in \N} $ in $[0,\infty)$ such that $A = \sum_{n\in \N} a_n e_n \otimes e_n$, $B=\sum_{n\in \N} b_n e_n \otimes e_n$, and $Q=\sum_{n\in \N} q_n e_n \otimes e_n$,  then $\id_{H_\C} + 2 B Q_t $ is invertible for all $t\geq 0$, with 
\begin{equation*}
(\id_{H_\C} + 2B Q_t)^{-1}
=
\sum_{n\in \N}  
     \frac{a_n^2+ a_n b_n q_n(e^{2a_n t} -1)}
     {|a_n + \overline{b_n} q_n (e^{2a_n t} -1)|^2} ( e_n\otimes e_n), 
     \quad t \geq 0,
\end{equation*}
and $t\mapsto (\id_{H_\C} + 2 B Q_t)^{-1}$ is continuously differentiable with derivative 
\begin{equation*}
   t \mapsto  -2\e^{2tA} B (\id_{H_\C} + 2 B Q_t)^{-2}.
\end{equation*}
 Moreover the process $\psi(\cdot,B)$ given by~\eqref{eq:def_psi:laplace_mixed} with $-iv=B$, i.e.,
 \begin{equation}\label{eq:def_psi3_reminder}
 \begin{aligned}
     \psi(t,B) & = \e^{tA} (\id_{H_{\C}} + 2B Q_t )^{-1}B \e^{tA^*}\\
     & = \sum_{n\in \N}  
     \frac{b_n e^{2a_n t}\left(a_n^2+ a_n \overline{b_n} q_n(e^{2a_n t} -1)\right)}
     {|a_n + b_n q_n (e^{2a_n t} -1)|^2} ( e_n\otimes e_n), 
     \quad t \geq 0,
     \end{aligned}
 \end{equation}
 satisfies~\eqref{eq:psi1bounds} (with $H$ replaced by $H_\C$ in the norms whenever needed)
 and  $\psi(t,B)\in S^+(H)\oplus iS(H)$ for all $t\geq 0$.
If moreover $AB\in L(H_{\C})$, then $\psi(\cdot,B)$
 is a solution to~\eqref{eq:Riccati}. If moreover $B, AB, A^2B \in L_1(H)$, then $\psi$, $A\psi$, $\psi A^*, A\psi A^*\in C^1([0,\infty), L_1(H_\C))$.
\end{enumerate}
\end{proposition}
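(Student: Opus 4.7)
The plan is to address uniqueness in (i) via a Gronwall estimate and to treat the existence parts (ii)--(iv) by a common template: exhibit the candidate $\psi(\cdot,B)$ explicitly, prove invertibility of the operator appearing in the middle so the formula makes sense, differentiate that inverse using $\partial_t Q_t = e^{tA^*}Qe^{tA}$, and finally verify by direct computation that $\partial_t \psi(t,B)$ equals the right-hand side of~\eqref{eq:Riccati}. For uniqueness, any two solutions $\psi_1,\psi_2$ are continuous and hence uniformly bounded on every compact interval $[0,T]$; writing $\Psi_i := \psi_i + \psi_i^T$, the quadratic nonlinearity factors as
$$\Psi_1 Q \Psi_1 - \Psi_2 Q \Psi_2 = (\Psi_1 - \Psi_2)Q\Psi_1 + \Psi_2 Q(\Psi_1 - \Psi_2),$$
so the difference $\Delta_t = \psi_1(t,B) - \psi_2(t,B)$ satisfies a linear-in-$\Delta$ integral inequality of the form $\|\Delta_t\|_{L(H_{\C})} \leq C\int_0^t \|\Delta_s\|_{L(H_{\C})}\,ds$, and Gronwall forces $\Delta \equiv 0$.

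For (ii), invertibility of $M_t := \id_H + 2\sqrt{B}Q_t\sqrt{B}$ follows from $\sqrt{B}Q_t\sqrt{B} \in S_1^+(H)$ (hence $\sigma(M_t) \subseteq [1,\infty)$ and $\|M_t^{-1}\|_{L(H)} \leq 1$); the derivative formula then comes from $\partial_t M_t^{-1} = -M_t^{-1}(\partial_t M_t)M_t^{-1}$, and the bounds in~\eqref{eq:psi1bounds} follow from $\|M_t^{-1}\|_{L(H)} \leq 1$ and a H\"older-type estimate for $\|\psi(t,B)Q\|_{L_1(H)}$ obtained by cyclic permutation of the trace. To verify the Riccati equation I differentiate $\psi(t,B) = e^{tA}\sqrt{B}M_t^{-1}\sqrt{B}e^{tA^*}$ by the product rule: the semigroup factors contribute $A\psi + \psi A^*$ (commuting $A$ past $e^{tA}$ via $Ae^{tA}\sqrt{B} = e^{tA}A\sqrt{B}$ uses the hypothesis $A\sqrt{B} \in L(H)$), the middle term contributes $-2\psi Q\psi$, and since $\psi(t,B) \in S^+(H)$ is self-adjoint one has $\psi^T = \psi$, so $\tfrac{1}{2}(\psi+\psi^T)Q(\psi+\psi^T) = 2\psi Q\psi$ matches. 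Parts (iii) and (iv) follow the same template: in (iii) invertibility holds because $\sigma(\operatorname{sign}(iB)\id_{H_{\C}} + 2i\sqrt{|B|}Q_t\sqrt{|B|}) \subseteq \{\pm 1\} + i\R$ stays away from zero, and in (iv) joint diagonalization reduces the entire problem to a scalar Riccati ODE on each spectral mode whose explicit solution gives the claimed series formula.

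The main obstacle throughout is the interplay between the unbounded operator $A$ and the inverse $M_t^{-1}$. The Riccati equation demands $A\psi, \psi A^* \in L(H_{\C})$, which via our formula translates into boundedness of $Ae^{tA}\sqrt{B}$; this is precisely why the extra hypotheses $A\sqrt{B}\in L(H)$, $A\sqrt{|B|}\in L(H)$ and $AB\in L(H_{\C})$ are imposed in (ii)--(iv) to validate the ODE itself, as opposed to the mere existence of the formula. The final $L_1$-regularity statements then follow from the ideal property of the trace class combined with the strengthened hypotheses $\sqrt{B}, A\sqrt{B}, A^2\sqrt{B} \in L_1(H)$: each of $\psi$, $A\psi$, $\psi A^*$, $A\psi A^*$ can be written as a product in which one factor lies in $L_1(H)$ while the others remain uniformly bounded in $L(H)$, and the same product-rule calculation then delivers continuous $t$-derivatives in $L_1(H_{\C})$.
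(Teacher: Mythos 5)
Your proposal is correct and follows essentially the same route as the paper: the paper's uniqueness argument ("coefficients are Lipschitz continuous on bounded subsets of $L(H_\C)$") is exactly your factored-quadratic Gronwall estimate unpacked, and the existence template in (ii)--(iv)---spectral invertibility with $\|M_t^{-1}\|\le 1$, the resolvent difference identity for $\partial_t M_t^{-1}$, H\"older for the $L_1$ bound, product rule and commutation of $A$ with $e^{tA}$ under the hypothesis $A\sqrt{B}\in L(H)$, and scalar reduction in (iv)---is the paper's proof. The one ingredient you omit that the paper spells out is how to verify $\psi(t,B)\in S^+(H)\oplus iS(H)$ in case (iii): the paper uses the identity $(C+i\id_{H_\C})^{-1}=(\id_{H_\C}+C^2)^{-1}(C-i\id_{H_\C})\in S^+(H)\oplus iS(H)$ for $C\in S^+(H_\C)$, which makes this membership (not just invertibility) transparent.
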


\begin{proof}
\begin{enumerate}
\item Uniqueness of a solution to~\eqref{eq:Riccati} is immediate from the fact that the coefficients of the differential equation are Lipschitz continuous on bounded subsets of $L(H_{\C})$.
\item Note that $\sqrt{B}Q_t \sqrt{B}\in S^+(H)$, whence it follows from e.g.~\cite[Theorem VII.1.4]{Werner:2000} that $\id_H + 2\sqrt{B}Q_t \sqrt{B}$ is invertible with $(\id_H + 2\sqrt{B}Q_t \sqrt{B})^{-1}\in S^+(H)$, and $\| (\id_H + 2\sqrt{B}Q_t \sqrt{B})^{-1}\|_{L(H)}\leq 1$. To verify continuous differentiability, note that for $s,t\geq 0$ we have 
\begin{align*}
&   (\id_H + 2\sqrt{B}Q_t \sqrt{B})^{-1}
    -    
    (\id_H + 2\sqrt{B}Q_s \sqrt{B})^{-1}
\\& \quad     =
    -2(\id_H + 2\sqrt{B}Q_t \sqrt{B})^{-1}
     \sqrt{B}(Q_t - Q_s) \sqrt{B}
    (\id_H + 2\sqrt{B}Q_s \sqrt{B})^{-1}
\end{align*}
whence differentiability follows from the differentiability of $Q_t$ and the uniform boundedness of the inverses. The bounds~\eqref{eq:psi1bounds} now follow from the definition of $\psi$, the fact that $\| \sqrt{B} \|_{L(H)} \leq \sqrt{\| B \|_{L(H)}}$ (see~\cite[Theorem VII.1.4]{Werner:2000}) and~\eqref{eq:SchattenHolder}. One also readily verifies $\psi(t,B)\in S^+(H)$ for all $t\geq 0$, and that if one has $A\sqrt{B} \in L(H)$ then $\psi(\cdot,B)$ is a solution to~\eqref{eq:Riccati} (note that $A\sqrt{B} \in L(H)$ implies $\sqrt{B}A^*\in L(H)$). The final assertion regarding the case that $A^2\sqrt{B}\in L_1(H)$ is also easily verified.
\item The proof of this statement is entirely analogous to the proof above (in particular,~\cite[Theorem VII.1.4]{Werner:2000} ensures existence and global boundedness of the inverse). However, to see that 
$(i\sign(iB) I_{H_\C} + 2\sqrt{|B|}Q_t \sqrt{|B|})^{-1} \in S^+(H)\oplus iS(H)$ it helps to observe that for $C\in S^+(H_{\C})$
we have $$(C + i\id_{H_{\C}} )^{-1} = (\id_{H_{\C}} + C^2)^{-1}(C-i\id_{H_{\C}}) \in S^+(H)\oplus iS(H).$$
\item This statement is easily verified by hand (note that in this case $A$, $B$, and $Q_t$ all commute). In order to obtain the bounds~\eqref{eq:psi1bounds} note that once again we have $\| (I_{H_{\C}} + 2 B Q_t)^{-1}\|_{L(H)} \leq 1$.
\end{enumerate}
\end{proof}

We will also need the following approximation lemma:

\begin{lemma}\label{lem:approx_riccati}
Assume the setting of Theorem~\ref{thm:laplace_transform}.
Let $(h_n)_{n\in \N}$ be an orthonormal basis for $H$ (and thus also for $H_{\C}$) satisfying\footnote{Note that $D(A^2)$ is a dense subspace of $H$ by e.g.~\cite[Prop.\ 1.8]{EngelNagel:2000} whence the desired orthonormal basis can be obtained by applying the Gram-Schmidt procedure to a countable subset of $D(A^2)$ that is dense in $H$.} $h_n\in D(A^2)$ for all $n\in \N$. Let $P_n\in L(H_{\C})$ denote the orthogonal projection onto $\operatorname{span}(\{h_1,\ldots,h_n\})$, $n\in \N$. 
\begin{enumerate}
    \item Let $u\in S^+(H)$ and set $u_n = (P_n \sqrt{u} P_n)^2$. Then $u_n \in S^+(H)$ and for $\psi$ as defined in~\eqref{eq:def_psi1_reminder} we have
    \begin{enumerate}[label = (\alph*),ref=(\alph*)]
        \item\label{it:psin_sat_assA} $\psi(\cdot,u_n) \in C^1([0,\infty),L_1(H_{\C}))$ 
%
solves~\eqref{eq:Riccati}  and satisfies $A\psi(\cdot,u_n)$, $\psi(\cdot,u_n) A^*$, $A\psi(\cdot,u_n) A^* \in C^1([0,\infty),L_1(H_{\C}))$ and $\psi(t,u_n)\in S^+(H) \oplus i S(H)$ 
 for all $t\geq 0$;
%
    \item\label{it:lhs_charfun_psin_conv} 
    $\lim_{n\rightarrow \infty} \E [\exp(-\tr(u_n X_t ) )| x_0 ]  =\E [\exp(-\tr(u X_t )) | x_0 ] $ a.s.\ for all $t\geq 0$, where $X$ is the process satisfying~\eqref{eq:solX_laplace};
        \item \label{it:rhs_charfun_psin_conv}for all $t\geq 0$ we have
        \begin{align*}
            &\lim_{n\rightarrow \infty} 
            \exp\left(-\tr(\psi(t,u_n)x_0) - \alpha \int_{0}^{t} \tr(\psi(s,u_n)Q) \,ds \right)
            \\
            &= \exp\left(-\tr(\psi(t,u)x_0) - \alpha \int_{0}^{t} \tr(\psi(s,u)Q) \,ds \right)
            \quad \text{a.s.}
        \end{align*}
    \end{enumerate}
    \item Let $v\in S^+(H)\cup S^-(H)$ and set $v_n = \sign(v)(P_n \sqrt{|v|} P_n)^2$. Then $v_n \in S^+(H)\cup S^-(H)$ and for $\psi(\cdot,-iv_n)$, $\psi(\cdot,-iv)$ as defined in~\eqref{eq:def_psi2_reminder} we have
    that~\ref{it:psin_sat_assA}--\ref{it:rhs_charfun_psin_conv} above hold (but with $u_n$ replaced by $-iv_n$ and $u$ replaced by $-iv$).
    \item Suppose $A$, $u$, $v$, and $Q$ are jointly diagonizable, i.e., there exists an orthonormal basis $(e_k)_{k \in \N}$ for $H$ and sequences $(a_k)_{k\in \N}$ in $\R$, $(u_k)_{k\in N}$ in $[0,\infty)$, $(v_k)_{k\in N}$ in $\R$, $(q_k)_{k\in \N} $ in $[0,\infty)$ such that $A = \sum_{k\in \N} a_k e_k \otimes e_k$, $u-iv=\sum_{k\in \N} (u_k -iv_k) e_k \otimes e_k$, and $Q=\sum_{k\in \N} q_k e_k \otimes e_k$. Define $B_n = \sum_{k=1}^n (u_k -iv_k) e_k \otimes e_k$. Then for $\psi(\cdot,B_n), \psi(\cdot,u-iv)$ as defined in~\eqref{eq:def_psi3_reminder} we have
    that~\ref{it:psin_sat_assA}--\ref{it:rhs_charfun_psin_conv} above hold (but with $u_n$ replaced by $B_n$ and $u$ replaced by $u-iv$). 
\end{enumerate}

\end{lemma}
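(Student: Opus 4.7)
For part (1), the first task is to identify the structure of $\sqrt{u_n}$. Since $P_n\sqrt{u}P_n \in S^+(H)$ by construction, uniqueness of the positive square root forces $\sqrt{u_n} = P_n\sqrt{u}P_n$. Because $\operatorname{ran}(P_n)\subseteq D(A^2)$, the operators $\sqrt{u_n}$, $A\sqrt{u_n}$ and $A^2\sqrt{u_n}$ are all finite-rank with range in $D(A)$, and hence automatically in $L_1(H)$. Proposition~\ref{prop:riccati}\ref{it:riccati:psi1} then delivers (a) directly.

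For (b), the plan is to establish that $u_n \to u$ in the strong operator topology (SOT) with uniform norm bound $\|u_n\|_{L(H)}\leq \|u\|_{L(H)}$. Writing $X_t = \sum_k \lambda_k e_k\otimes e_k$ in its (random) spectral decomposition, dominated convergence applied to $\tr(u_n X_t) = \sum_k \lambda_k\langle u_n e_k, e_k\rangle$ (with summable majorant $\|u\|_{L(H)}\lambda_k$) yields $\tr(u_n X_t) \to \tr(u X_t)$ $\P$-a.s. Since $|\exp(-\tr(u_n X_t))|\leq 1$, conditional dominated convergence gives (b).

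For (c), the central step is SOT convergence $\psi(t,u_n) \to \psi(t,u)$. Setting $T_n = \sqrt{u_n}Q_t\sqrt{u_n}$ and $T = \sqrt{u}Q_t\sqrt{u}$, SOT continuity of composition on norm-bounded sequences yields $T_n \to T$ in SOT. The resolvent identity
\[
(\id_H+2T_n)^{-1} - (\id_H+2T)^{-1} = 2(\id_H+2T_n)^{-1}(T-T_n)(\id_H+2T)^{-1}
\]
together with the uniform bound $\|(\id_H+2T_n)^{-1}\|_{L(H)}\leq 1$ transfers SOT convergence to the inverses, and hence to $\psi(t,u_n)$, while preserving the uniform estimate $\|\psi(t,u_n)\|_{L(H)}\leq \|u\|_{L(H)}\|\e^{tA}\|_{L(H)}^2$. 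Since $x_0\in L_1(H)$, the convergence $\tr(\psi(t,u_n)x_0)\to \tr(\psi(t,u)x_0)$ then follows from the standard fact that SOT-convergence of a uniformly norm-bounded sequence pairs continuously with $L_1(H)$ under the trace (via dominated convergence on the singular value expansion of $x_0$). For the integral term, I rewrite $\tr(\psi(s,u_n)Q)$ via cyclicity as $\tr(M_n(s)\widetilde{Q}_s)$, where $\widetilde{Q}_s := \e^{sA^*}Q\e^{sA}\in L_1(H)$ and $M_n(s) := \sqrt{u_n}(\id_H+2\sqrt{u_n}Q_s\sqrt{u_n})^{-1}\sqrt{u_n}$; the same SOT-trace mechanism gives pointwise convergence of the integrand in $s$, and the bound $\|\psi(s,u_n)Q\|_{L_1(H)}\leq \|u\|_{L(H)}\|\e^{sA}\sqrt{Q}\|_{L_2(H)}^2$, integrable on $[0,t]$ by~\eqref{eq:ass_integrability2}, justifies DCT for the integral.

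Cases (2) and (3) proceed along the same lines. For (2), uniqueness of the positive square root gives $\sqrt{|v_n|} = P_n\sqrt{|v|}P_n$, Proposition~\ref{prop:riccati}\ref{it:riccati:psi2} supplies (a), and the bound $|\exp(-i\tr(v_nX_t))|=1$ replaces the bound on $\exp(-\tr(u_nX_t))$ in (b); (c) is proved exactly as above with the obvious modification of the resolvent argument (using $\|(i\operatorname{sign}(v)\id_{H_\C}+2\sqrt{|v_n|}Q_s\sqrt{|v_n|})^{-1}\|\leq 1$). Case (3) is simplest: $B_n$ is finite-rank, so $B_n, AB_n, A^2B_n \in L_1(H)$ trivially and Proposition~\ref{prop:riccati}\ref{it:riccati:psi3} gives (a); because $A$, $B_n$, and $Q$ are simultaneously diagonalised in the basis $(e_k)$, the formula~\eqref{eq:def_psi3_reminder} exhibits $\psi(t,B_n)$ as the $n$-th partial sum of the series for $\psi(t,u-iv)$, so both SOT convergence and the trace/integral limits reduce to DCT on the corresponding absolutely convergent sums. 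The principal technical obstacle throughout is the clean transfer from SOT convergence of approximate resolvents to convergence of traces against $L_1$ partners; this is handled uniformly by exploiting the $L(H)$-norm bound $\leq 1$ for the resolvents and the $L_2$-bound~\eqref{eq:ass_integrability2} on $\e^{sA}\sqrt{Q}$.
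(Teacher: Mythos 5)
Your proof is correct and follows essentially the same path as the paper's: identify $\sqrt{u_n}=P_n\sqrt{u}P_n$ so that $\sqrt{u_n},A\sqrt{u_n},A^2\sqrt{u_n}$ are finite rank (hence $L_1$) and invoke Proposition~\ref{prop:riccati} for (a); use SOT convergence with uniform bounds plus pairing against trace-class operators (the content of Lemmas~\ref{lem:invconv_sot} and~\ref{lem:sot_traceconvergence}, which you reprove inline) together with dominated convergence for (b) and (c). The only difference is one of presentation — the paper cites its auxiliary lemmas while you unpack their arguments — so the two proofs coincide in substance.
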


\begin{proof}\begin{enumerate}
\item Note that $A^2u_n\in L_1(H)$ by construction (indeed, $u_n$ is of finite rank and $A^2 P_n \in L(H)$), whence~\ref{it:psin_sat_assA} follows from Proposition~\ref{prop:riccati}~\ref{it:riccati:psi1}. As $\| P_m \|_{L(H)} \leq 1$ and $P_m \stackrel{\text{s.o.t.}}{\rightarrow} \id_{H_{\C}}$ (`s.o.t.' is `strong operator topology'), we obtain $ \| u_n \|_{L(H)} \leq \| u \|_{L(H)}$ and $u_n \stackrel{\text{s.o.t.}}{\rightarrow} u$ by Lemma~\ref{lem:sot_traceconvergence}. As moreover 
$\tr(u_n X_t)\geq 0$ for all $n\in \N$ and all $t\geq 0$, assertion~\ref{it:lhs_charfun_psin_conv} follows from Lemma~\ref{lem:sot_traceconvergence} and the (conditional) dominated convergence theorem. Finally,~\ref{it:rhs_charfun_psin_conv} follows from Lemmas~\ref{lem:invconv_sot},~\ref{lem:sot_traceconvergence},~\eqref{eq:psi1bounds} in Proposition~\ref{prop:riccati}~\ref{it:riccati:psi1},  Assumption~\eqref{eq:ass_integrability2}, and the dominated convergence theorem.
\item The proof is analogous to the proof above, but we use Proposition~\ref{prop:riccati}~\ref{it:riccati:psi2} instead of Proposition~\ref{prop:riccati}~\ref{it:riccati:psi1}.
\item Note that $e_k\in \cap_{m\in \N}(D(A^m))$ so in particular $A^2 B_n \in L_1(H_{\C})$. Thus we can follow similar reasoning as above, but now using Proposition~\ref{prop:riccati}~\ref{it:riccati:psi3}.
\end{enumerate}
\end{proof}

\begin{proof}[Proof of Theorem~\ref{thm:laplace_transform}]
Fix $t>0$, $u\in S^+(H)$, and $v\in S(H)$ and assume one of the three cases~\ref{it:laplace_transform:laplace}--\ref{it:laplace_transform:laplace_fourierComm} is satisfied.  
First of all note that for $\psi$ as in~\eqref{eq:def_psi:laplace},~\eqref{eq:def_psi:fourier}, and~\eqref{eq:def_psi:diagonizable}
we have, by Proposition~\ref{prop:riccati}, that $\psi(t,u+iv)\in S^+(H)\oplus iS(H)$ for all $t\geq 0$ and we have the bounds \eqref{eq:psi1bounds}; this together with~\eqref{eq:ass_integrability2} ensures that~\eqref{eq:tr(psiQ)integrable} holds.\par 
Next, observe that in view of Lemma~\ref{lem:approx_riccati} it suffices to prove the remaining assertions of the theorem for initial values $u_n$ and $v_n$ as specified in Lemma~\ref{lem:approx_riccati}.
To simplify the notation we shall omit the dependence of $\psi$ on them, i.e., we write $\psi(t)$ instead of $\psi(t,u_n+iv_n)$. Note that by Lemma~\ref{lem:approx_riccati}, $\psi$ thus satisfies following condition.

\begin{itemize}
\item[\textbf{Condition A:}] The function $\psi$ lies in $ C^1([0,\infty),  S^+(H) \oplus i S(H))$ and satisfies~\eqref{eq:Riccati} such that $A\psi,$ $ \psi A^*,$ $ A\psi A^* \in C^1([0,\infty),L_1(H_{\C}))$. 
\end{itemize}

Let now $(h_k)_{k\in \N}$ be an orthonormal basis for $H$ (and thus also for $H_{\C}$) satisfying $h_k\in D(A^*)$  for all $k\in \N$. Moreover, let $\lambda \in \rho(A)$, with $\rho(A)$ being the resolvent set of $A$ (see~\cite[Theorem I.1.10]{EngelNagel:2000}) and let $R(\lambda, A):= (\lambda - A)^{-1}$ denote the resolvent of $A$. Let $P_m \in L(H_{\C})$ denote the orthogonal projection onto $\operatorname{span}(\{h_1,\ldots,h_m\})$ and define a further approximation $\psi_m\in C^1([0,t],L(H_{\C}))$ by 
\begin{equation}\label{eq:def_psim}
    \psi_m(s) = R(\lambda, A) P_m (\lambda - A) \psi(s) (\lambda  - A^*) P_m R(\lambda, A^*),
\end{equation} 
for all $s \geq 0$, $m\in \N$, i.e.,
\begin{align}\label{eq:simple_f}
    \psi_m(s)=\sum_{k,\ell=1}^{m} f_{k,\ell}(s) R(\lambda,A) h_k \otimes R(\lambda,A)  h_{\ell},\quad s\in [0,t],\, m\in \N,
\end{align}
where $f_{k,\ell}(s) = \langle (\lambda - A ) \psi(s) (\lambda-A^*) h_k, h_{\ell} \rangle_{H_\C} $.
Note that $f_{k,\ell}\in C^1([0,\infty),\C)$ due to the fact that $(\lambda - A )\psi (\lambda - A^*)\in C^1([0,\infty),L_1(H_\C))$.
To ease notation, we also introduce the processes $\phi,\phi_m \colon [0,t]\times \Omega \rightarrow \R$ ($m\in \N$) which are given by
\begin{equation}\label{eq:def_phi}
\phi_m(s) = \alpha \int_0^s \tr(\psi_m(s)Q)\,ds,\quad 
\phi(s) = \alpha \int_0^s \tr(\psi(s)Q)\,ds,
\quad s \in [0,t],\, m\in \N,
\end{equation}
and we introduce the processes $Z,Z^{m} \colon [0,t]\times \Omega \rightarrow \R$ ($m\in \N$) which are given by
\begin{equation}\label{eq:def_Z}
Z_s^{m} = \exp(-\tr (\psi_m(t-s) X_s) - \phi_m(t-s)), \quad 
Z_s = \exp(-\tr (\psi(t-s) X_s) - \phi(t-s)),
\end{equation}
$s \in [0,t]$, $m\in \N$.
The reason why we introduce $\psi_m$, $\phi_m$
 and $Z_m$ is that due to the presence of the unbounded operator $A$, we do not directly have access to the dynamics of the full process $X$ (as $X$ is in general not a semimartingale) and therefore we 
 cannot directly apply  the It\^o formula to $Z$. 
 Instead, we apply the It\^o formula to $Z^m_t$ and use that $\psi_m$ is of the form~\eqref{eq:simple_f}, allowing us to exploit~\eqref{eq:solX_laplace}, being a real-valued semimartingale. More specifically, 
let $(g_j)_{j\in \N}$ be an orthonormal basis\footnote{Note that $(g_j)_{j\in \N}$ can be obtained by applying a Gram-Schmidt procedure to $(R(\lambda,A)h_j)_{j\in \N}$.} for $H$ such that $(g_j)_{j=1}^{m}$ is an orthonormal basis for $\operatorname{span}(\{ R(\lambda,A)h_1,\ldots, R(\lambda, A)h_m\})$ for all $m\in \N$, in particular, $\langle R(\lambda, A)h_{\ell}, g_j \rangle_H =0$ whenever $j>\ell$. Then we obtain from~\eqref{eq:solX_laplace}, the fact that $\psi_m$ is of the form~\eqref{eq:simple_f}, the It\^o formula, and~\eqref{eq:def_phi}
that 
\begin{equation}\label{eq:simpleIto1}
\begin{aligned}
&Z_{s}^{m} 
= 
\exp(-\tr (\psi_m(t-s) X_{s}) -\phi_m(t-s))
\\ &
 =
\exp\left(-\sum_{j,k,\ell=1}^{m} f_{k,\ell}(t-s) \langle g_j, X_{s} R(\lambda,A) h_k \rangle_{H_\C} \langle R(\lambda,A) h_{\ell}, g_j \rangle_H-\phi_m(t-s) \right)
\\ & 
=
Z_0^{m} 
+ 
\int_{0}^{s}
Z_r^{m}
\tr \Big((\psi'_m(t-r) - A\psi_m(t-r) - \psi_m(t-r)A^*) X_r \Big)
\,dr
\\
&\quad  - \int_{0}^{s}
   Z_r^{m} \tr\left(\psi_m(t-r) \left( \sqrt{X_r} \,dW_r \sqrt{Q} + \sqrt{Q} \,dW_r^* \sqrt{X_r}\right)\right)
   \\
&\quad + \tfrac{1}{2} \sum_{i,j\in \N}
\int_{0}^{s} Z_r^{m} \left(\tr\left( \psi_m(t-r)\left(\sqrt{X_r}(h_i \otimes h_j) \sqrt{Q} + \sqrt{Q}(h_j \otimes h_i) \sqrt{X_r}\right)\right)\right)^2\,dr
\end{aligned}
\end{equation}
$\P$-a.s.\ for all $s\in [0,t]$.

Note that $\langle C^T h, g \rangle_{H_{\C}} = \langle C g , h \rangle_{H_{\C}}$ whenever $\Im(g)=\Im(h)=0$ and $C\in L(H_{\C})$. Therefore, noting that $\Im(h_k)=0$ for all $k\in \N$, we have
\begin{equation}\label{eq:simple_HSnorm}
\begin{aligned}
\sum_{i,j\in \N} (\tr(C (h_i\otimes h_j)))^2 
& = \sum_{i,j\in \N} 
(\langle C h_i, h_j \rangle_{H_{\C}})^2
= \sum_{i,j\in \N} 
\langle C h_i, h_j \rangle_{H_{\C}}
\langle C^{T} h_j, h_i \rangle_{H_{\C}}
\\
& = \sum_{i,j\in \N}      
\langle C h_i, h_j \rangle_{H_{\C}}
\langle h_j, (C^T)^*h_i \rangle_{H_{\C}}
\\
& = \sum_{i\in \N}
\langle Ch_i, (C^T)^*h_i \rangle_{H_{\C}}
= \tr(C^TC)
\end{aligned}
\end{equation}
for all $C\in L_2(H_{\C})$.
Also note that $\tr(C+D)=\tr{C}+\tr{D}$, $(CD)^{T}=D^TC^T$, $\tr(C)=\tr(C^T)$, $\tr(CD)=\tr(DC)$ , and $(C^T)^T = C$ for all $C,D\in L_1(H_{\C})$, whence
\begin{equation}\label{eq:rewrite_2ndorderterm}
\begin{aligned}
& \sum_{i,j\in \N}
\left(\tr\left( \psi_m(t-r)\left(\sqrt{X_r}(h_i \otimes h_j) \sqrt{Q} + \sqrt{Q}(h_j \otimes h_i) \sqrt{X_r}\right)\right)\right)^2
\\ 
& =\sum_{i,j\in \N}
\left(\tr\left( \sqrt{Q}(\psi_m(t-r)+\psi_m^T(t-r))\sqrt{X_r}(h_i \otimes h_j)\right)\right)^2
\\ 
& = \tr\left(
(\psi_m(t-r)+\psi_m^T(t-r)) Q (\psi_m(t-r)+\psi_m^T(t-r))X_r
\right),
\end{aligned}
\end{equation}
where we used~\eqref{eq:simple_HSnorm} for the second equality and the fact that $X_r,\psi_m(r)\in L_1(H_{\C})$ to ensure all quantities are well-defined. Note that $\psi(r)\in S^+(H)\oplus i S(H)$
implies that $\psi_m(r)\in S^+(H)\oplus i S(H)$, in particular $\psi_m(r)^T = \psi_m(r)$. In conclusion,~\eqref{eq:simpleIto1} reduces to
\begin{equation}\label{eq:simpleIto2}
\begin{aligned}
Z_{s}^{m} 
&=
Z_0^{m}
    + \int_0^{s} Z_r^{m} \tr\left(
     (\psi'_m(t-r) - A\psi_m(t-r) - \psi_m(t-r)A^*)X_r\right) \,dr 
\\ & \quad      
     + 2\int_0^{s} Z_r^{m} \tr\left(\psi_m(t-r) Q \psi_m(t-r) X_r \right)
\,dr 
\\
&\quad  - \int_{0}^{s}
   Z_r^{m} \tr\left(\psi_m(t-r) \left( \sqrt{X_r} \,dW_r \sqrt{Q} + \sqrt{Q} \,dW_r^* \sqrt{X_r}\right)\right)
\end{aligned}
\end{equation}
$\P$-a.s. for all $s\in [0,t]$.\par 

Next we wish to take $m\rightarrow \infty$ in~\eqref{eq:simpleIto2}. 
Lemma~\ref{lem:approxPsi}, and the fact that $\psi, A\psi, \psi A^*, A\psi A^* \in C^1([0,\infty),L_1(H_{\C}))$ (see Condition A) we have 
\begin{equation}\label{eq:psimbounded}
\sup_{m\in \N} (\| \psi_m \|_{C^1([0,t],L_1(H_{\C}))} + \| A \psi_m\|_{C^1([0,t],L(H_{\C}))} + \|  \psi_m A^* \|_{C^1([0,t],L(H_{\C}))}) < \infty,
\end{equation}
and 
\begin{equation}\label{eq:phimbounded}
\sup_{m\in \N} \| \phi_m \|_{C([0,t],\C)} 
\leq 
\sup_{m\in \N} \alpha \int_{0}^{t} |\tr(\psi_m(s) Q) | \,ds 
< \infty,
\end{equation}
as well as 
$\phi_m(s)\rightarrow \phi(s)$,
$\psi_m(s) \stackrel{\textnormal{s.o.t.}}{\rightarrow} \psi(s) $, $A\psi_m(s) \stackrel{\textnormal{s.o.t.}}{\rightarrow} A\psi(s)$, $\psi_m(s) A^* \stackrel{\textnormal{s.o.t.}}{\rightarrow} A^*$, $\psi'_m(s) \stackrel{\textnormal{s.o.t.}}{\rightarrow} \psi'(s)$ for all $s\in [0,t]$, $m\in \N$. This together with  Lemma~\ref{lem:sot_traceconvergence} and the fact that $\psi$ solves~\eqref{eq:Riccati} imply
\begin{equation}\label{eq:detint_to0}
\begin{aligned}
\lim_{m\rightarrow \infty}\left| \tr\left(
     (\psi'_m(r) - A\psi_m(r) - \psi_m(r) A^* + 2 \psi_m(r) Q \psi_m(r))X_{t-r}\right) \right| = 0
\end{aligned}
\end{equation}
for all $r\in [0,t]$. We want to stress at this point that the construction of $\psi_m$ involving resolvents, i.e., essentially involving an orthonormal projection in $D(A)$ instead of an orthonormal projection in $H$, is necessary  to ensure~\eqref{eq:detint_to0} -- indeed, only by projecting in $D(A)$ do we gain control over $A \psi_m$.
Moreover,~\eqref{eq:psimbounded} implies that 
\begin{equation}\label{eq:Zmbounded}
    \sup_{m\in \N} \sup_{r\in [0,t]} | Z_r^m |
    \leq \sup_{m\in \N} \sup_{r\in [0,t]}  \e^{-\phi_m(r)} < \infty,
\end{equation}
and Lemma~\ref{lem:sot_traceconvergence} implies that  $\lim_{m\rightarrow \infty} Z_r^m = Z_r$ a.s.\ for all $r\in [0,t]$. By arguments similar to~\eqref{eq:rewrite_2ndorderterm}, we can calculate the quadratic variation of $Z^{m}$:
\begin{equation}\label{eq:est_StochInt}
\begin{aligned}
\langle Z^m \rangle_s &= 
\left\langle
\int_{0}^{\cdot}
   Z_r^{m} \tr\left(\psi_m(t-r) \left( \sqrt{X_r} \,dW_r \sqrt{Q} + \sqrt{Q} \,dW_r^* \sqrt{X_r}\right)\right)
\right\rangle_s
\\
& \qquad = 
\sum_{i,j\in \N}
\int_{0}^{s}
   \left|
    Z_r^{m}
    \tr\left(\psi_m(t-r) \left( \sqrt{X_r} (h_i\otimes h_j) \sqrt{Q} + \sqrt{Q} (h_j \otimes h_i) \sqrt{X_r}\right)\right)
   \right|^2
\,dr
\\ & \qquad 
= 4\int_{0}^{s}
    |Z_r^m|^2 \tr(\psi_m(t-r)Q\psi_m(t-r)X_r)
\,dr,
\end{aligned}
\end{equation}
for all $s\in [0,t]$.
From the above observations we can conclude that 
\begin{equation}
\langle Z^m \rangle_s \rightarrow 
    4\int_{0}^{s}
    |Z_r|^2 \tr(\psi(t-r)Q\psi(t-r)X_r)
\,dr\quad \text{a.s.}
\end{equation}
for all $s\in [0,t]$.
Combining~\eqref{eq:simpleIto2} with the bounds~\eqref{eq:psimbounded} and~\eqref{eq:phimbounded} and the convergence results above we arrive at 
\begin{equation}\label{eq:simpleIto3}
\begin{aligned}
Z_{t}
&=
Z_0 - \int_{0}^{t}
   Z_r^{m} \tr\left(\psi_m(t-r) \left( \sqrt{X_r} \,dW_r \sqrt{Q} + \sqrt{Q} \,dW_r^* \sqrt{X_r}\right)\right) \quad\text{a.s.}
\end{aligned}
\end{equation}
It follows from~\eqref{eq:def_phi},~\eqref{eq:def_Z}, and the fact that $\psi(0)=u_n+iv_n$ (recall that we work in the realm of Lemma \ref{lem:approx_riccati}) that 
\begin{align*}
    Z_t & = \exp(-\tr((u_n-iv_n)X_t ))
    & \text{and} &
    & Z_0 
    & 
    = \exp\left(-\tr(\psi(t)x_0)-\alpha \int_0^t \tr(\psi(s)Q)\,ds \right).
\end{align*}
Thus in particular $|Z_t| \leq 1$, implying that the stochastic integral on the right-hand side of~\eqref{eq:simpleIto3} has finite moments conditional on $x_0$ and thus its expectation conditioned on $x_0$ is $0$.
This implies that by taking the conditional expectation in~\eqref{eq:simpleIto3} we get
\begin{equation}
\begin{aligned}
&\E \left[ 
\exp(-\tr((u_n-iv_n)X_{t}))  
| x_0 
\right] = \E \left[ Z_t | x_0 \right] \\ 
& \quad = \E[ Z_0 | x_0 ]  = \exp\left(
        -\tr(\psi(t)x_0)
        - \alpha \int_0^t \tr(\psi(s)Q) \,ds\right).
\end{aligned}
\end{equation}
Invoking Lemma \ref{lem:approx_riccati} completes the proof of Theorem~\ref{thm:laplace_transform}. 
\end{proof}

\subsection{Examples of the Fourier and Laplace transform}\label{subs:examples_laplacetrans}

In this section we provide some concrete calculations of the Fourier and Laplace transform of a Wishart process.

First of all, by taking $u=0$ and $v=r\id_{H}$ ($r\in \R$) in Theorem~\ref{thm:laplace_transform}~\ref{it:laplace_transform:fourier} we obtain the 
characteristic function of $\|X_t\|_{L_1(H)} = \tr(X_t)$. By a slight abuse of notation we write $\psi(t,r)$ in Corollary~\ref{cor:charfunc_trX} below instead of $\psi(t, - i r \id_H)$ which is used in Theorem~\ref{thm:laplace_transform}~\ref{it:laplace_transform:fourier}.

\begin{corollary}\label{cor:charfunc_trX}
Assume the setting of Proposition~\ref{thm:laplace_transform}.
Then \begin{equation}\label{eq:charfunc_trX}
\E \left[ \exp( ir\| X_t\|_{L_1(H)}) \,|\, x_0\right]
=
\E \left[ \exp( ir\tr(X_t)) \,|\, x_0\right]
=
\exp( - \tr(\psi(t, r)x_0) - \phi(t, r))
\end{equation}
for all $r\in \R$, where 
\begin{align}
 \psi(t,r)
 &
 = r\e^{tA}
    \left(
         i \id_{H_{\C}} + 2rQ_t 
    \right)^{-1}
    \e^{tA^*},
\label{eq:psidef_trX} \\  
\phi(t,r) 
 &
 = \alpha \int_{0}^t \tr(\psi(s,r)Q)\,ds,\label{eq:phidef_trX}
\end{align}
for all $r\in \R$, $t\geq 0$, and $(Q_t)_{t\geq 0}$ is defined by~\eqref{eq:Qtdef}.
\end{corollary}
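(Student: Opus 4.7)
The plan is to apply Theorem~\ref{thm:laplace_transform}~\ref{it:laplace_transform:fourier} directly, with the choice $u = 0$ and $v = r\id_H$. Two preliminary observations are needed. First, for every $X_t \in S_1^+(H)$ one has $\tr(X_t) = \sum_n \langle X_t h_n, h_n\rangle_H = \|X_t\|_{L_1(H)}$ by positivity, so that $-\tr\bigl((0 - ir\id_H)X_t\bigr) = ir\tr(X_t)$, which is exactly the Fourier variable appearing on the left-hand side of~\eqref{eq:charfunc_trX}. Second, the choice $v = r\id_H$ falls within the hypothesis $v \in S^+(H) \cup S^-(H)$ of case~\ref{it:laplace_transform:fourier}: for $r > 0$ we have $v\in S^+(H)$, for $r < 0$ we have $-v\in S^+(H)$, and for $r = 0$ the identity~\eqref{eq:charfunc_trX} is trivial since both sides equal $1$ (note $\psi(t,0) = 0$).

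Next I would substitute $v = r\id_H$ into~\eqref{eq:def_psi:fourier}. Since $\sqrt{|v|} = \sqrt{|r|}\id_H$ and $\operatorname{sign}(v) = \operatorname{sign}(r)\id_H$ for $r\neq 0$, the computation
\begin{equation*}
\psi(s,-ir\id_H)
= |r|\e^{sA}\bigl(i\operatorname{sign}(r)\id_{H_{\C}} + 2|r|Q_s\bigr)^{-1}\e^{sA^*}
= r\,\e^{sA}\bigl(i\id_{H_{\C}} + 2rQ_s\bigr)^{-1}\e^{sA^*}
\end{equation*}
matches~\eqref{eq:psidef_trX}; for $r > 0$ the last equality is immediate, and for $r < 0$ it follows by factoring $-1$ out of both factors inside the inverse and using $|r| = -r$. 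In both cases the invertibility of $i\id_{H_\C} + 2rQ_s$ on $H_\C$ is automatic from Proposition~\ref{prop:riccati}~\ref{it:riccati:psi2} (applied with $iB = v$).

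Finally, the formula~\eqref{eq:phidef_trX} for $\phi$ is simply the identification of the exponent $\alpha\int_0^t \tr(\psi(s,-iv)Q)\,ds$ that appears on the right-hand side of~\eqref{eq:Xisaffine}, and its finiteness is guaranteed by~\eqref{eq:tr(psiQ)integrable}. Since all the analytical content, in particular the Riccati equation, the approximation argument and the It\^o calculation, has already been carried out in Theorem~\ref{thm:laplace_transform}, no further obstacle arises: the corollary is essentially a notational specialisation, with the only nontrivial bit being the sign bookkeeping indicated above.
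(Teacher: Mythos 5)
Your proposal is correct and follows essentially the same route as the paper, which likewise obtains the corollary by specialising Theorem~\ref{thm:laplace_transform}~\ref{it:laplace_transform:fourier} to $u=0$, $v=r\id_H$ and using $\tr(X_t)=\|X_t\|_{L_1(H)}$ for positive operators. The only difference is that you spell out the sign bookkeeping (rewriting $|r|(i\operatorname{sign}(r)\id_{H_{\C}}+2|r|Q_s)^{-1}$ as $r(i\id_{H_{\C}}+2rQ_s)^{-1}$ for $r<0$ and treating $r=0$ separately), which the paper leaves implicit; this is a welcome clarification rather than a deviation.
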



\begin{example}\label{ex:laplace_transform_easy}
Assume the setting and notation of Theorem~\ref{thm:laplace_transform} with $\alpha =n\in \N$ and suppose moreover that $x_0 = \sum_{j=1}^{n} x_j h_j\otimes h_j$ and $u=\sum_{j=1}^{m} u_j g_j \otimes g_j$ for some $m\in \N$, $x_1,\ldots,x_n, u_1,\ldots,u_m\in [0,\infty)$, and some orthonormal systems $(h_j)_{j=1}^{n}$ and $(g_j)_{j=1}^{m}$ in $H$. Fix $t\geq 0$ and define the matrix $\hat{Q}_t\in \R^{m\times m}$ by
\begin{align*}
 (\hat{Q}_t)_{j,k} & = \sqrt{u_j u_k}
 \int_0^{t} \langle \sqrt{Q} \e^{sA} g_j, \sqrt{Q} \e^{sA} g_k \rangle_H \,ds,&\quad j,k\in \{1,\ldots,m\}.
\end{align*}
Then 
$\det(\id_H + 2 \sqrt{u} Q_t \sqrt{u})
 = \det(\id_{\R^m}+2\hat{Q}_t)$
and 
\begin{align*} 
&
\| \sqrt{x_0} \e^{tA} \sqrt{u} (I_{H}+2\sqrt{u}Q_t \sqrt{u})^{-\frac{1}{2}} \|_{L_2(H)}^2
\\ & \qquad 
= \sum_{j=1}^{m}
    \langle 
        \sqrt{u} (\id_H +2\sqrt{u} Q_t \sqrt{u} )^{-1} g_j,
        \e^{tA^*} x_0 \e^{tA} \sqrt{u} g_j 
    \rangle_H
\\ & \qquad 
= \sum_{j,k=1}^{m}
    \sqrt{u_j}
    \langle 
        \sqrt{u} (\id_H +2\sqrt{u} Q_t \sqrt{u} )^{-1} g_j, g_k 
    \rangle_H
    \langle 
        \e^{tA^*} x_0 \e^{tA} g_j, g_k 
    \rangle_H
\\ & \qquad
= \sum_{j,k=1}^{m}
    \sqrt{u_j u_k}
    \left(
        (\id_{\R^m} + 2\hat{Q}_t)^{-1}
    \right)_{j,k}
    \sum_{i=1}^{n} x_i \langle h_i, \e^{tA} g_j\rangle_H \langle h_i,  \e^{tA} g_k \rangle_{H}.
\end{align*}
In conclusion, from~\eqref{eq:Laplacetrafoexplicit} we obtain
\begin{equation}
\begin{aligned}\label{eq:laplace_transform_example}
& \E[ \exp(-\tr(X_t u )]
 =
(\det(\id_{\R^m}+2\hat{Q}_t))^{-\frac{n}{2}}
\\ & \quad 
\times 
\exp\left( 
\sum_{j,k=1}^{m}
    \sum_{i=1}^{n}
    x_i
    \sqrt{u_j u_k}
    \left(
        (\id_{\R^m} + 2\hat{Q}_t)^{-1}
    \right)_{j,k}
    \langle h_i, \e^{tA} g_j\rangle_H 
    \langle h_i,  \e^{tA} g_k \rangle_{H}
\right).
\end{aligned}
\end{equation}
Similarly, for $\psi(t) = \e^{tA}\sqrt{u}(i \id_H + 2\sqrt{u} Q_t \sqrt{u})^{-1}\sqrt{u} e^{tA^*}$, $t\geq 0$, we have  
\begin{align*}
& \tr\left(\psi(t) x_0 \right) 
\\
& \quad = 
\sum_{j,k=1}^{m}
    \sum_{i=1}^{n}
    x_i
    \sqrt{u_j u_k}
    \left(
        (i\id_{\R^m} + 2\hat{Q}_t)^{-1}
    \right)_{j,k}
    \langle h_i, \e^{tA} g_j\rangle_H 
    \langle h_i,  \e^{tA} g_k \rangle_{H}
\end{align*}
and 
\begin{align*}
\int_{0}^t \tr\left( \psi(s) Q \right) \,ds
=
\sum_{j,k=1}^{m} 
    \left(
        (i\id_{\R^m} + 2\hat{Q}_t)^{-1}
    \right)_{j,k} (\hat{Q}_t)_{j,k}
    \Big),
\end{align*}
whence we obtain from Theorem~\ref{thm:laplace_transform}~\ref{it:laplace_transform:fourier} that
\begin{align*}
& \E[ \exp(i\tr(X_t u )]
 =
\exp\Big(
     - n \sum_{j,k=1}^{m} 
    \left(
        (i\id_{\R^m} + 2\hat{Q}_t)^{-1}
    \right)_{j,k} (\hat{Q}_t)_{j,k}
    \Big)
\\ & \quad 
\times 
\exp\Big(
- \sum_{j,k=1}^{m}
    \sum_{i=1}^{n}
    x_i
    \sqrt{u_j u_k}
    \left(
        (i\id_{\R^m} + 2\hat{Q}_t)^{-1}
    \right)_{j,k}
    \langle h_i, \e^{tA} g_j\rangle_H 
    \langle h_i,  \e^{tA} g_k \rangle_{H}
\Big).
\end{align*}
\end{example}

\begin{example}\label{ex:laplace_transform_easiest}
We consider the following example (in the setting of Proposition~\ref{thm:laplace_transform}): $H=L^2(0,1)$, $\alpha=n\in \N$,
$A = \sum_{j\in \N} a_j h_j\otimes h_j$ 
with $a_j = - j^2\pi^2$ and $h_j(x)= \sin(j\pi x)$, $x\in (0,1)$, and $Q=\id_{L^2}$, i.e., $A$ is the Dirichlet Laplacian (see also Remark~\ref{rem:integrability}). In addition, we let $h\in H$ satisfy $\| h \|_H = 1$ and set $u= h\otimes h$. We then see that $\sqrt{u} Q_t \sqrt{u}$ (with $Q_t$ as in~\eqref{eq:Qtdef}) is given by 
\begin{equation*}
 \sqrt{u} Q_t \sqrt{u} = \int_0^{t} \left\| \e^{sA} h \right\|_{H}^{2}\,ds\,  h \otimes h.
\end{equation*}
For notational ease, we introduce 
$ q_t :=  \int_0^{t} \left\|\e^{sA} h \right\|_{H}^{2} \,ds $
so that we obtain the following from~\eqref{eq:Laplacetrafoexplicit}
\begin{align*}
 \E \left[ 
   \exp( - \langle X_t h, h \rangle_H )
 \right]
 & =
 (1+2q_t)^{-\frac{n}{2}} \exp\left( 
    \tfrac{1}{1+2q_t} \| \sqrt{x_0} \e^{tA} h \|_{H}^2
 \right).
\end{align*}
\end{example}

\section{Characterisation of infinite-dimensional Wishart processes}\label{sec:characterization}

In this section we establish necessary conditions for the existence of infinite-dimensional Wishart processes, i.e., necessary conditions for the existence of a solution to~\eqref{eq:Wishart-formal}. 
As mentioned in the introduction, we consider two settings: firstly, we prove that if $Q$ is injective and if moreover $\e^{tA}$ is injective for some $t>0$, then the existence of a Wishart process implies that $\alpha \in \N$, see Theorem~\ref{thm:char_infdimwishart} below. The proof of this result is based on the Laplace transform calculated in the previous section and the finite-dimensional characterisation of Wishart distributions~\cite{graczyk2018characterization,letac2018laplace,M:19}. 

Secondly, we establish (without any assumptions on the semigroup $(\e^{tA})_{t\geq 0}$) that if $X$ is a Wishart process, then either $\operatorname{rank}(X_t)\geq \operatorname{rank}(Q)$ a.s.\ for almost all $t> 0$, or $\alpha \in \N$ with $\alpha < \operatorname{rank}(Q)$ and $\operatorname{rank}(X_t) = \alpha$ a.s.\ for almost all $t>0$, see Corollary~\ref{cor:alpha_is_n}. In particular, 
if $\operatorname{rank}(Q) = \infty$, then a finite-rank Wishart process exists if and only if $\alpha \in \N$, see Corollary~\ref{cor:charWishartsimple} below.

\subsection{A characterisation of Wishart processes when \texorpdfstring{$Q$ and $\e^{tA}$ are}{the covariance operator is} injective}\label{ssec:char_laplace}

Recall the following characterisation of non-central Wishart distributions, see~\cite[Theorem 1.3]{graczyk2018characterization},~\cite{letac2018laplace}, or~\cite[Theorem 1.1]{M:19}
(see also Lemma~\ref{lem:simple_inverse}):

\begin{theorem}\label{thm:char_wishart}
Let $n\in \N$, $\alpha\in [0,\infty)$, and $b\in S^+(\R^n)$, and $Q\in S^{++}(\R^n)$. Then there exists a probability measure $\mu$ on $S^+(\R^n)$ satisfying 
\begin{align}\label{eq:wishart_laplace}
\int_{S^+(\R^n)} \exp(-\tr(u\xi)) \,\mu(d\xi)
&=
\det(\id_{\R^n}+2Qu)^{-\frac{\alpha}{2}}\exp(-bu(\id_{\R^n}+2Qu)^{-1}), 
\end{align}
for all $u \in S^+(\mathbb{R}^n)$ if and only if one of the following two conditions is satisfied:
\begin{enumerate}
    \item $\alpha \in \{0,1,\ldots,n-2\}$ and $\operatorname{rank}(b)\leq \alpha$,
    \item $\alpha \geq  n-1$.
\end{enumerate}
\end{theorem}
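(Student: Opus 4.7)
Since this statement is recalled from the cited papers \cite{graczyk2018characterization,letac2018laplace,M:19}, the proof I propose is in effect to cite those results directly; still, let me sketch the underlying argument so the proposal is self-contained. A preliminary reduction substitutes $u \mapsto Q^{-1/2} u Q^{-1/2}$ on the left and $b \mapsto Q^{1/2} b Q^{1/2}$ on the right: this transforms both sides of \eqref{eq:wishart_laplace} in a compatible way (using $\det(I + 2Qu) = \det(I + 2 Q^{1/2} u Q^{1/2})$ and the cyclic property of the trace) and allows me to reduce to $Q = I_n$.

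For sufficiency I would exhibit $\mu$ explicitly. Whenever $\alpha = k \in \N$ with $k \geq n-1$, or $k \in \{0, \ldots, n-2\}$ and $\operatorname{rank}(b) \leq k$, factor $b = m^T m$ with $m \in \R^{k \times n}$, take the rows of $Y \in \R^{k \times n}$ to be independent $\mathcal{N}(m_i, I_n)$, and set $X = Y^T Y$. A Gaussian integral exactly of the type carried out in the proof of Proposition~\ref{thm:laplace_transform_OU} (specialized to $H = \R^n$, $A = 0$, $Q = I_n$) then verifies \eqref{eq:wishart_laplace}. For non-integer $\alpha \geq n-1$, I would invoke Gindikin's theorem: the central case $b = 0$ of the right-hand side of \eqref{eq:wishart_laplace} is the Laplace transform of the (non-Gaussian) Wishart probability measure of shape $\alpha$ on $S^+(\R^n)$, and the noncentrality can be incorporated either by convolution with a suitable deterministic noncentrality kernel or by analytic continuation in $b$, since both sides of \eqref{eq:wishart_laplace} depend holomorphically on $b \in S^+(\R^n)$ and agree on the integer diagonal.

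For necessity, the regime $\alpha \in (0, n-1) \setminus \N$ is excluded already at the level of the central part $\det(I + 2u)^{-\alpha/2}$ by Gindikin's characterization of the Wallach set: this function simply fails to be the Laplace transform of any positive measure on $S^+(\R^n)$, so no choice of $b$ can rescue the formula. The remaining obstruction concerns $\alpha \in \{0, \ldots, n-2\}$ with $\operatorname{rank}(b) > \alpha$; here I would follow the strategy of \cite{M:19}, extracting the first two moments of the hypothetical $\mu$ by differentiating \eqref{eq:wishart_laplace} at $u = 0$ and deducing a contradiction with the positive-semidefiniteness required of a second moment.

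The main obstacle is precisely this last step: pinning down the right tensorial positivity condition that is violated when $\operatorname{rank}(b)$ exceeds an integer $\alpha \leq n-2$. This is the technical heart of \cite[Theorem 1.1]{M:19}, which is non-trivial because the Laplace transform itself remains a perfectly well-defined positive, decreasing function on $S^+(\R^n)$, so the obstruction lives in the finer tensor structure of higher moments rather than in any pointwise positivity; for this reason I would invoke the published result directly rather than reprove it.
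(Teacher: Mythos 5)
The paper does not prove this theorem; it recalls it verbatim with citations to \cite{graczyk2018characterization,letac2018laplace,M:19}, so your proposal to invoke those references directly matches the paper's own treatment. However, the accompanying sketch has two genuine gaps worth flagging, since as written they would lead you astray if you tried to flesh it out.

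First, the Gaussian construction $X=Y^TY$ with $Y\in\R^{k\times n}$ requires factoring $b=m^Tm$ with $m\in\R^{k\times n}$, which forces $\operatorname{rank}(b)\leq k$. This covers every $b\in S^+(\R^n)$ only when $k\geq n$. For the boundary integer $\alpha=n-1$ the theorem asserts that \emph{every} $b$ is admissible, including $\operatorname{rank}(b)=n$, and in that case there is no $m\in\R^{(n-1)\times n}$ with $m^Tm=b$, so the $Y^TY$ construction simply does not apply; a genuinely different argument (the explicit noncentral Wishart density, or a Bochner/positivity argument as in \cite{M:19}) is required already at this integer value. Second, "analytic continuation in $b$" is not a valid mechanism for sufficiency: agreement of a family of Laplace transforms on a sub-domain does not propagate the property of \emph{being a Laplace transform of a positive measure} to the analytic continuation, and moreover the continuation relevant to the non-integer part of the Wallach set is in $\alpha$, not in $b$. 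The references establish existence for non-integer $\alpha\geq n-1$ directly (via the density or via convolution identities that keep positivity manifest), not by continuation.
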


The following is a direct consequence of Corollary~\ref{cor:laplace_transform_simple} and Lemma~\ref{lem:simple_inverse}:

\begin{corollary}\label{cor:laplace_transform_findim}
Assume the setting of Theorem~\ref{thm:laplace_transform} and let $(h_i)_{i=1}^{n}$ be an orthonormal system in $H$.
Define $P_n\colon \R^n \rightarrow H$ by $P_n(x)=\sum_{i=1}^{n}x_ih_i.$ Then $Y=P_n^* X P_n$ is an $S^+(\R^n)$-valued stochastic process with a Laplace transform given by
\begin{align}
& \E[\exp(-\tr(uY_t) | x_0 ] \notag
\\
&= \notag
\det(I_{\R^n} + 2 \sqrt{u} P_n^* Q_t P_n \sqrt{u})^{-\frac{\alpha}{2}}
\\ & \label{eq:laplace_transform_findim1}
\quad \times 
\exp\left(-\tr\left(P_n^*\e^{tA^*} x_0 \e^{tA}P_n \sqrt{u}(I_{\R^n} + 2 \sqrt{u} P_n^* Q_t P_n \sqrt{u})^{-1} \sqrt{u}  \right)\right)
\\  \notag
& = \det(I_{\R^n} + 2  P_n^* Q_t P_n u)^{-\frac{\alpha}{2}}
\\ & \label{eq:laplace_transform_findim2}
\quad \times 
\exp\left(-\tr\left(P_n^*\e^{tA^*} x_0 \e^{tA}P_n u (I_{\R^n} + 2 P_n^* Q_t P_n u)^{-1} \right)\right)
\end{align}
for all $u\in S^+(\R^n)$, where $Q_t$ is defined in \eqref{eq:Qtdef}. 
\end{corollary}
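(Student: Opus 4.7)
The plan is to reduce this corollary directly to Corollary~\ref{cor:laplace_transform_simple} by lifting $u\in S^+(\R^n)$ to a finite-rank operator on $H$, applying the known Laplace transform formula there, and then pushing the resulting expression back down to $\R^n$ by standard linear-algebra identities.

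The key observation is the following. Given $u\in S^+(\R^n)$, set $\tilde u := P_n u P_n^*\in S_1^+(H)$; this operator is of finite rank, positive, self-adjoint, and hence trace class. Since $P_n^* P_n = \id_{\R^n}$, by cyclicity of the trace
\[
\tr(\tilde u\, X_t) = \tr(P_n u P_n^* X_t) = \tr(u\, P_n^* X_t P_n) = \tr(u\, Y_t).
\]
Consequently Corollary~\ref{cor:laplace_transform_simple} applied to $\tilde u$ (note its hypotheses are met) yields
\[
\E[\exp(-\tr(uY_t))\mid x_0] = \det(\id_H+2\sqrt{\tilde u}\,Q_t\sqrt{\tilde u})^{-\alpha/2}\exp\!\bigl(-\tr(\e^{tA}\sqrt{\tilde u}(\id_H+2\sqrt{\tilde u}\,Q_t\sqrt{\tilde u})^{-1}\sqrt{\tilde u}\,\e^{tA^*}x_0)\bigr).
\]

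The next step is to convert the $H$-level quantities on the right-hand side into the $\R^n$-level quantities appearing in~\eqref{eq:laplace_transform_findim1}. Using again $P_n^*P_n=\id_{\R^n}$ one checks $(P_n\sqrt u\,P_n^*)^2 = P_n u P_n^* = \tilde u$, whence $\sqrt{\tilde u} = P_n\sqrt u\,P_n^*$ and therefore $\sqrt{\tilde u}\,Q_t\sqrt{\tilde u} = P_n\sqrt u\,(P_n^*Q_tP_n)\sqrt u\,P_n^*$. Sylvester's determinant identity then gives
\[
\det(\id_H+2\sqrt{\tilde u}\,Q_t\sqrt{\tilde u}) = \det(\id_{\R^n}+2\sqrt u\,P_n^*Q_tP_n\sqrt u),
\]
while a direct Sherman--Morrison--Woodbury computation (or equivalently, checking the identity after multiplying by $\id_H+2\sqrt{\tilde u}\,Q_t\sqrt{\tilde u}$) yields
\[
\sqrt{\tilde u}(\id_H+2\sqrt{\tilde u}\,Q_t\sqrt{\tilde u})^{-1}\sqrt{\tilde u} = P_n\sqrt u\,(\id_{\R^n}+2\sqrt u\,P_n^*Q_tP_n\sqrt u)^{-1}\sqrt u\,P_n^*.
\]
Plugging this back into the trace term and using cyclicity of the trace to move the outer $P_n^*, P_n$ next to $\e^{tA^*}x_0\e^{tA}$ yields~\eqref{eq:laplace_transform_findim1}.

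Finally, the equivalence of~\eqref{eq:laplace_transform_findim1} and~\eqref{eq:laplace_transform_findim2} follows by applying Lemma~\ref{lem:simple_inverse} (specifically, the identity $\sqrt u(\id_{\R^n}+2\sqrt u\,M\sqrt u)^{-1}\sqrt u = u(\id_{\R^n}+2Mu)^{-1}$ with $M = P_n^*Q_tP_n$) both inside the trace and inside the determinant. The proof is entirely algebraic and involves no real obstacles: the only point that deserves care is verifying the Sherman--Morrison--Woodbury step on $H$ with the infinite-dimensional identity, but since the finite-rank perturbation is of finite rank the standard proof carries over verbatim.
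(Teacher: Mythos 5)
Your proof is correct and follows the same route as the paper's: lift $u$ to $\tilde u=P_nuP_n^*$, use cyclicity of the trace to reduce to $\E[\exp(-\tr(\tilde u X_t))\mid x_0]$, apply Corollary~\ref{cor:laplace_transform_simple}, and then invoke Lemma~\ref{lem:simple_inverse} for the second form. You simply spell out the intermediate algebra (in particular $\sqrt{\tilde u}=P_n\sqrt{u}P_n^*$ and the Sylvester/Woodbury reductions, which are equivalently seen from the block structure of $\tilde u$ on $\operatorname{range}(P_n)\oplus\operatorname{range}(P_n)^\perp$) that the paper leaves implicit.
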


\begin{proof}
The fact that $X$ is $S^+_1(H)$-valued implies that $P_n^* X P_n$ is $S^+(\R^n)$ valued. Moreover, note that~\eqref{eq:laplace_transform_findim1} follows from the formula for $\E[\exp(-\tr(uP_n^* X_t P_n))] $ in Corollary~\ref{cor:laplace_transform_simple},
noting that $\tr(ABC)=\tr(CBA)$ for operators $A,B$, and $C$ with appropriate domains and co-domains. 
In addition, Lemma~\ref{lem:simple_inverse} implies that~\eqref{eq:laplace_transform_findim2} follows from~\eqref{eq:laplace_transform_findim1}.
\end{proof}

Combining Theorem~\ref{thm:char_wishart} and Corollary~\ref{cor:laplace_transform_findim} we obtain the following:

\begin{theorem}\label{thm:char_infdimwishart}
Let $H$ be a separable real Hilbert space, let $A\colon D(A)\subset H \rightarrow H$ be the generator of a $C_0$-semigroup, 
let $Q\in S^+(H)$, let $\alpha \in \R$, let $(\Omega,\cF,\P,(\cF_t)_{t\geq 0})$ be a filtered probability space rich enough to allow for an $L_2(H)$-cylindrical $(\cF_t)_{t\geq 0}$-Brownian motion, let $x_0\in L^1((\Omega,\cF_0,\P),S_1^+(H))$,
and assume 
\begin{equation}\label{eq:ass_integrability3}
\int_{0}^{t}\| \e^{sA} \sqrt{Q}\|_{L_2(H)}^2\,ds < \infty
\end{equation} 
for all $t\geq 0$. Assume moreover that $Q$ is injective and that there exists a $\tau>0$ such that $\e^{\tau A}$ is injective. 
Then the following are equivalent:
\begin{enumerate}
\item\label{it:char:alpha_in_N} $\alpha\in \N$ and $\operatorname{rank}(x_0)\leq \alpha$ a.s.
\item\label{it:char:sol_exits} there exists an $L_2(H)$-cylindrical $(\cF_t)_{t\geq 0}$-Brownian motion $W$ and an adapted stochastic process $X\colon [0,\infty)\times \Omega \rightarrow S^+_1(H)$ with continuous sample paths satisfying
\begin{equation}\label{eq:solX_char}
\begin{aligned}
 \langle X_t g, h\rangle_{H} 
 & = 
 \langle x_0 g, h \rangle_{H}
 + 
 \int_0^{t} 
 (\alpha \langle Q g, h \rangle_H 
 + \langle X_s A g, h \rangle_{H} 
 + \langle X_s g, Ah \rangle_H ) \,ds
 \\ & \quad 
 + \int_{0}^{t} \langle \sqrt{X}_s \,dW_s \sqrt{Q}g, h\rangle_H + \int_0^t \langle \sqrt{Q} \,dW^*_s \sqrt{X}_s g, h \rangle_H
\end{aligned}
\end{equation}
for all $t\geq 0$ and all $h,g\in D(A)$. 
\end{enumerate}
\end{theorem}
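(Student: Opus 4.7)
Proof plan: The implication (i)$\Rightarrow$(ii) is precisely the content of Theorem~\ref{thm:existence}, so the substance lies in (ii)$\Rightarrow$(i). My plan is to prove this by projecting the solution onto finite-dimensional coordinate subspaces of $H$ and invoking the classical characterisation of non-central Wishart distributions (Theorem~\ref{thm:char_wishart}) pathwise in $x_0$, then passing to a limit.

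For the setup, I will fix an orthonormal basis $(h_i)_{i\in\N}$ of $H$ and, for each $n\in\N$, let $P_n\colon\R^n\to H$ be the isometry sending $e_i$ to $h_i$; put $Y_t^{(n)}:=P_n^*X_tP_n\in S^+(\R^n)$, $\tilde Q_t:=P_n^*Q_tP_n$, and $\tilde x_t:=P_n^*\e^{tA^*}x_0\e^{tA}P_n$. Corollary~\ref{cor:laplace_transform_findim} then produces the conditional Laplace transform
\[
\E[\exp(-\tr(uY_t^{(n)}))\mid x_0]
= \det(\id_{\R^n}+2\tilde Q_tu)^{-\alpha/2}\exp\bigl(-\tr(\tilde x_tu(\id_{\R^n}+2\tilde Q_tu)^{-1})\bigr),
\]
which is exactly the Laplace transform appearing in Theorem~\ref{thm:char_wishart}. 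Since $\langle Q_th,h\rangle_H=\int_0^t\|\sqrt Q\,\e^{sA}h\|_H^2\,ds$ and $\sqrt Q$ is injective, strong continuity of $(\e^{sA})$ forces $Q_t\in S^{++}(H)$ and hence $\tilde Q_t\in S^{++}(\R^n)$ for every $t>0$. Because $Y_t^{(n)}$ is a genuine $S^+(\R^n)$-valued random variable, the conditional Laplace transform above comes from an actual probability measure on $S^+(\R^n)$ almost surely, and Theorem~\ref{thm:char_wishart} then forces, $\P$-a.s., that either $\alpha\ge n-1$ or else $\alpha\in\{0,\dots,n-2\}$ with $\operatorname{rank}(\tilde x_t)\le\alpha$.

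Because $\alpha$ is deterministic, the choice $n\ge\alpha+2$ excludes the first alternative and yields $\alpha\in\N$ together with $\operatorname{rank}(\tilde x_t)\le\alpha$ a.s. Intersecting these full-measure events over all admissible $n$ and all rational $t>0$ produces a single almost-sure event on which the rank bound holds simultaneously. On this event, strong continuity of the semigroup gives $\tilde x_t\to P_n^*x_0P_n$ in the finite-dimensional space $S(\R^n)$ as $t\downarrow 0$ through rationals, and lower semicontinuity of rank under norm convergence in finite dimensions delivers $\operatorname{rank}(P_n^*x_0P_n)\le\alpha$ for every $n\ge\alpha+2$. A short argument using that $x_0$ is positive self-adjoint (so that $\ker(P_n^*x_0P_n)=\ker(x_0P_n)$) together with the fact that $(h_i)$ is a basis shows $\operatorname{rank}(P_n^*x_0P_n)=\operatorname{rank}(x_0P_n)\nearrow\operatorname{rank}(x_0)$ as $n\to\infty$, completing the deduction $\operatorname{rank}(x_0)\le\alpha$ a.s.

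The most delicate point is measurability and null-set bookkeeping: one has to verify that the expression from Corollary~\ref{cor:laplace_transform_findim} describes the \emph{regular conditional law} of $Y_t^{(n)}$ given $x_0$ rather than merely the integrated Laplace transform, so that Theorem~\ref{thm:char_wishart} can be applied pathwise; and then the exceptional null sets, one for each pair $(n,t)\in\N\times\mathbb{Q}_+$, have to be organised into a single full-measure event on which the limits $t\downarrow 0$ and $n\to\infty$ can be taken in succession. The semigroup hypothesis that $\e^{\tau A}$ be injective does not appear explicitly in this argument, but it is retained from the theorem statement as it underlies related consequences (compare Remark~\ref{injectiverankn}).
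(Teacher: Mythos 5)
Your proof is correct and its skeleton matches the paper's — project onto finite-dimensional subspaces via Corollary~\ref{cor:laplace_transform_findim} and invoke Theorem~\ref{thm:char_wishart} — but you make two substitutions that, taken together, constitute a genuine improvement. The paper fixes $t=\tau$ and relies on the injectivity of $\e^{\tau A}$ twice: once to show $Q_\tau\in S_1^{++}(H)$ (via ``$\e^{sA}$ is injective for all $s\le\tau$'') and once at the very end, to pass from $\operatorname{rank}(\e^{\tau A^*}x_0\e^{\tau A})\le\alpha$ to $\operatorname{rank}(x_0)\le\alpha$. You replace the first use by the observation that injectivity of $Q$ together with strong continuity of the semigroup already forces $Q_t\in S^{++}(H)$ for \emph{every} $t>0$ (for $h\neq 0$ the integrand $s\mapsto\|\sqrt Q\,\e^{sA}h\|_H^2$ is continuous and strictly positive at $s=0$), and you replace the second by letting $t\downarrow 0$ through rationals and using lower semicontinuity of rank in $\R^{n\times n}$. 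The upshot, which you note yourself, is that the hypothesis that $\e^{\tau A}$ be injective for some $\tau>0$ never enters your argument for (ii)$\Rightarrow$(i); if this holds up it strengthens the theorem and would in fact answer Open Problem~\ref{op:char_wishart} in the negative, so it deserves careful double-checking. Two minor items to shore up when writing this out: (1) Theorem~\ref{thm:char_wishart} presupposes $\alpha\ge 0$, so rule out $\alpha<0$ separately (the factor $\det(\id_{\R^n}+2P_n^*Q_tP_n\,u)^{-\alpha/2}$ blows up along $u=c\,\id_{\R^n}$, $c\to\infty$, while the exponential factor stays bounded, contradicting that the Laplace transform is $\le 1$); (2) the regular-conditional-law point you flag is real but routine: both sides of the identity from Corollary~\ref{cor:laplace_transform_findim} are continuous in $u$, so checking equality on a countable dense set of $u\in S^+(\R^n)$ and discarding a single null set suffices.
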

\begin{proof}
The implication~\ref{it:char:alpha_in_N} $\rightarrow$ \ref{it:char:sol_exits} follows from Theorem~\ref{thm:existence}. For the reverse implication note that by assumptions on $Q$ and $(\e^{tA})_{t\geq 0}$ we can conclude that $Q_{\tau}=\int_0^{\tau} e^{s A^*} Q e^{sA} \, ds$ as defined in \eqref{eq:Qtdef} lies in $S^{++}_1(H)$: indeed, $Q_t\in S_1^+(H)$ by~\eqref{eq:ass_integrability3}, moreover, $\e^{sA}$ is injective for all $s\in [0,\tau]$ due to the semigroup property. So for $h\in H\setminus\{0\}$
we have $ \e^{sA} h \neq \{0\}$ for all $s\in [0,\tau]$ and as $Q$ is also injective  $\langle Q_{\tau} h,h\rangle_{H} = \int_{0}^{\tau} \langle Q \e^{sA} h, \e^{sA} h \rangle_{H} \,ds >0$, so indeed $Q_{\tau}\in S^{++}_1(H)$. \par 
Let $(h_i)_{i\in \N}$ be an orthonormal basis for $H$ and for $n\in \N$ let $P_n\colon \R^n\rightarrow H$ be defined by $P_n(x)=\sum_{i=1}^{n} x_i h_i$. Note that $P_n^* Q_{\tau} P_n \in S^{++}(\R^n)$ for all $n\in \N$. Considering now the process $P_n^* X P_n$, it follows from Corollary~\ref{cor:laplace_transform_findim} with $t=\tau$ and Theorem~\ref{thm:char_wishart} that $\alpha \in \N$ (because one can always pick $n>\alpha+2$) and moreover that $\operatorname{rank}(P_n^* \e^{\tau A^*}x_0 \e^{\tau A} P_n)\leq \alpha$ a.s.\ for all $n\geq \alpha+2$. Note that if $h\notin \operatorname{ker}(\e^{\tau A^*}x_0 \e^{\tau A})$, then there exists an $N_h\in \N$ such that $P_n^* \e^{\tau A^*}x_0 \e^{\tau A} P_n P_n^*h \neq 0$ for all $n\geq N_h$.
In particular, if $\operatorname{rank}(P_n^* \e^{\tau A^*}x_0 \e^{\tau A} P_n)\leq \alpha$ for all $n\geq \alpha+2$, then $\operatorname{rank}(\e^{\tau A^*}x_0 \e^{\tau A}) \leq \alpha$. Due to the fact that $\e^{\tau A}$ is injective, we can conclude that $\operatorname{rank}(x_0)\leq \alpha$.
\end{proof}

\begin{remark}\label{rem:semigroup_injective}
The assumption that $\e^{\tau A}$ is injective for some $\tau>0$ is not harmless: e.g.\ the shift semigroup on $L^2(0,1)$ does not satisfy this property. On the other hand, by the spectral mapping theorem 
(see e.g.~\cite[Theorem 10.55]{Neerven:2022}) if $A$ is self-adjoint then $\e^{tA}$ is injective for all $t\geq 0$. Moreover, if $A$ is bounded, then $t\mapsto \e^{tA}$ is strongly continuous and thus $\e^{tA}$ is injective for sufficiently small $t>0$.
\end{remark}

\begin{remark}
We cannot conclude from~\eqref{cor:laplace_transform_findim} that $P_n^* XP_n$ is an affine process in the sense of~\cite[Definition 2.1]{CFMT:11}, because in general $P_n^* X P_n$ is not a Markov process. However, if $A\equiv 0$, then there exists an $n$-dimensional affine Markov process $Y$ such that $Y_t$ and $P_n^* X_t P_n$ are identical in law for all $t\geq 0$, indeed, $Y$ is precisely the affine process with admissible parameter set $(P_n^* Q P_n, \alpha P_n^* Q P_n, 0,0, 0,0,0)$
in the notation of~\cite[Theorem 2.4]{CFMT:11}. This observation follows from~\ref{eq:laplace_transform_findim2} and~\cite[Theorem 2.4]{CFMT:11}.
\end{remark}

\begin{remark}\label{injectiverankn}
Assume the setting of Theorem \ref{thm:char_infdimwishart} and suppose moreover that $\alpha \in \N$. By Corollary \ref{cor:wishart_uniqueinlaw}
we have that 
any process satisfying \eqref{eq:solX_char} coincides in law with the Wishart process constructed in Theorem \ref{thm:existence} and is thus of rank at most $\alpha \in \mathbb{N}$. In fact, from from Corollary \ref{cor:rankn} below we get that $\operatorname{rank}(X_t)=\alpha$ $\mathbb{P}$-a.s. for almost all $t >0$.
\end{remark}

\subsection{A characterisation of finite rank Wishart processes without parameter restrictions}\label{ssec:char_finrank}

The following proposition is inspired by~\cite[Proposition 4.18]{CFMT:11}:

\begin{proposition}\label{prop:alpha_geq_n}
Let $\alpha \in \R$, $m\in \N$, let $H$ be a separable real Hilbert space, let $(\Omega,\cF,\P,(\cF_t)_{t\geq 0})$ be a filtered probability space, let $A\colon D(A)\subset H \rightarrow H$ be the generator of a $C_0$-semigroup,
and let $Q\in S^+(H)$ satisfy $\operatorname{rank}(Q)>m$. Assume moreover that there exists an $L_2(H)$-cylindrical $(\cF_t)_{t\geq 0}$-Brownian motion $W$ and an adapted stochastic process $X\colon [0,\infty)\times \Omega \rightarrow S^+(H)$ with continuous sample paths satisfying
$\P(\operatorname{rank}(X_0)=m)>0$ and
\begin{equation}\label{eq:solX_alpha}
\begin{aligned}
 \langle X_t g, h\rangle_{H} 
 & = 
 \langle x_0 g, h \rangle_{H}
 + 
 \int_0^{t} 
 (\alpha \langle Q g, h \rangle_H 
 + \langle X_s A g, h \rangle_{H} 
 + \langle X_s g, Ah \rangle_H ) \,ds
 \\ & \quad 
 + \int_{0}^{t} \langle \sqrt{X}_s \,dW_s \sqrt{Q}g, h\rangle_H + \int_0^t \langle \sqrt{Q} \,dW^*_s \sqrt{X}_s g, h \rangle_H
\end{aligned}
\end{equation}
for all $t\geq 0$ and all $h,g\in D(A)$. 
Then $\alpha \geq m$. 
If moreover 
\begin{equation}\label{eq:Xstaysofrankm}
\P(\{\operatorname{rank}(X_0)=m\} \cap \{\inf\{ t\geq 0 \colon \operatorname{rank}(X_t) > m\}\} )>0
\end{equation}
then $\alpha = m$. 
\end{proposition}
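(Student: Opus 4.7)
The plan is to mimic the finite-dimensional argument in \cite[Prop.~4.18]{CFMT:11} by compressing $X$ to an $(m{+}1)$-dimensional subspace where the classical Wishart determinant identity can be applied, then extracting the constraint on $\alpha$ from the sign of the drift of the determinant. The main obstacle, already flagged by the authors in the introduction, is that the unbounded operator $A$ restricts the admissible projection subspaces to lie in $D(A)$, so the compressed SDE inherits an extra linear-drift term of the form $P_V^*(A^*X_t+X_tA)P_V$ whose contribution to the drift of $\det(Y_t)$ must be neutralised by a delicate choice of subspace and an approximation argument that also accommodates the randomness of $x_0$.

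First, I would exploit $\operatorname{rank}(Q)>m$ and the density of $D(A)$ in $H$ to pick an $(m{+}1)$-dimensional $V\subseteq D(A)$ such that $\hat Q:=P_V^*QP_V\in S^{++}(\mathbb R^{m+1})$, where $P_V\colon\mathbb R^{m+1}\to V$ is an isometry. A generic-position/perturbation argument ensures that on a subset of positive probability of $\{\operatorname{rank}(X_0)=m\}$ the compression $Y_0:=P_V^*X_0P_V$ has rank exactly $m$, hence $\det(Y_0)=0$. Feeding $g,h$ from an orthonormal basis of $V\subseteq D(A)$ into~\eqref{eq:solX_alpha} gives the semimartingale decomposition
\begin{equation*}
dY_t=(\alpha\hat Q+L_t)\,dt+dN_t,\qquad L_t:=P_V^*(A^*X_t+X_tA)P_V,
\end{equation*}
with $N$ a continuous $S(\mathbb R^{m+1})$-valued local martingale whose quadratic covariation, computed exactly as in the derivation of~\eqref{eq:rewrite_2ndorderterm}, reproduces the standard Wishart-diffusion covariation
\begin{equation*}
d[Y_{ij},Y_{kl}]_t=(\hat Q_{ik}Y_{jl}+\hat Q_{il}Y_{jk}+Y_{ik}\hat Q_{jl}+Y_{il}\hat Q_{jk})\,dt.
\end{equation*}

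Next, I would apply the It\^o formula to $\det(Y_t)$. The classical Bru-type computation (evaluating the Hessian of $\det$ against the above covariation) combines the constant drift $\alpha\hat Q$ with the second-order correction into the characteristic coefficient $(\alpha-((m{+}1)-1))=\alpha-m$, yielding
\begin{equation*}
d\det(Y_t)=\bigl[(\alpha-m)\tr(\operatorname{adj}(Y_t)\hat Q)+\tr(\operatorname{adj}(Y_t)L_t)\bigr]dt+dM_t,
\end{equation*}
for a continuous local martingale $M$. On the event $\{\operatorname{rank}(Y_t)=m\}$ the adjugate is rank-one, $\operatorname{adj}(Y_t)=c_tu_tu_t^\top$ with $u_t\in\ker(Y_t)$, $c_t>0$, and since $\hat Q\succ 0$ we have $\tr(\operatorname{adj}(Y_t)\hat Q)=c_t\langle\hat Qu_t,u_t\rangle>0$.

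Finally, to deduce $\alpha\geq m$, I would take expectations of $\det(Y_{t\wedge\tau_n})$ under a suitable localising sequence $\tau_n$, use $\det(Y_{t\wedge\tau_n})\geq 0$ and $\det(Y_0)=0$ on the good event, and compare signs: if $\alpha<m$, the first drift term is strictly negative on the rank-$m$ stratum, which together with a vanishing or controllable $\tr(\operatorname{adj}(Y_t)L_t)$ forces $\E\det(Y_{t\wedge\tau_n})<0$, a contradiction. Controlling the $L_t$-term is the crux: I would use a sequence $(V^{(k)})_k\subseteq D(A)$ of $(m{+}1)$-dimensional subspaces chosen so that their kernel directions $u_0^{(k)}$ become asymptotically orthogonal in $H$ to $A^*(\operatorname{range}(X_0))$, using a measurable Gram--Schmidt procedure in $D(A)$ to handle the random $X_0$; in the limit the $L$-term is dominated by $|\alpha-m|\tr(\operatorname{adj}(Y_0)\hat Q)$ and the contradiction goes through. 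For the second assertion, condition~\eqref{eq:Xstaysofrankm} forces $\det(Y_t)\equiv 0$ on a positive-length interval with positive probability, whence both the bounded-variation and martingale parts in the It\^o expansion vanish identically there; the pointwise identity $(\alpha-m)\tr(\operatorname{adj}(Y_t)\hat Q)+\tr(\operatorname{adj}(Y_t)L_t)\equiv 0$ combined with the same approximation scheme over $V$ isolates $\alpha-m=0$. The hardest step will be making this joint approximation (in the subspace $V$, in the stopping times, and in the random kernel direction) rigorous while keeping the trace quantities $L^1$-integrable, which is precisely the delicate point the authors signpost in the introduction.
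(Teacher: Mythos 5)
Your outline follows the same strategy as the paper -- applying It\^o's formula to the determinant of an $(m{+}1)$-dimensional compression of $X$, reading off the sign of the drift on the rank-$m$ stratum, and using positivity of the determinant to force $\alpha\geq m$, exactly in the spirit of~\cite[Prop.\ 4.18]{CFMT:11} -- and your final step for $\alpha=m$ under~\eqref{eq:Xstaysofrankm} also matches. However, there are two places where your plan either misdiagnoses the difficulty or leaves a genuine gap.

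First, you identify ``controlling the $L_t$-term'' (the contribution $\tr(\operatorname{adj}(Y_t)L_t)$ from the unbounded drift) as the crux, and propose to make the kernel directions asymptotically orthogonal to $A^*(\operatorname{range}(X_0))$. This is not the right diagnosis: the term vanishes \emph{identically} on the rank-$m$ stratum, for purely algebraic reasons and without any asymptotics. Indeed, on $\{\operatorname{rank}(Y_t)=m\}$ one has $\operatorname{adj}(Y_t)=c_tu_tu_t^\top$ with $u_t\in\ker(Y_t)$, and $u_t\in\ker(Y_t)$ is equivalent to $P_Vu_t\in\ker(X_t)$ (since $\langle Y_tu_t,u_t\rangle=\|\sqrt{X_t}P_Vu_t\|_H^2$); hence
\begin{equation*}
\tr(\operatorname{adj}(Y_t)L_t)=c_t\bigl\langle(A^*X_t+X_tA)P_Vu_t,P_Vu_t\bigr\rangle_H=2c_t\langle AP_Vu_t,X_tP_Vu_t\rangle_H=0.
\end{equation*}
The paper's proof is organised precisely to make this cancellation manifest \emph{before} any limit: the $(m{+}1)$-st basis vector $h_{m+1}$ is chosen exactly in $\ker(X_0)\cap D(A)$, so that every term in $Lg_n(X_0)$ carrying an $A$ is multiplied by a factor $\langle X_0h_{m+1},\cdot\rangle_H=0$, see~\eqref{eq:Lg(x0)1}--\eqref{eq:Lg(x0)2}. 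Your proposed asymptotic-orthogonality scheme would only make this quantity small, not zero, and since $A$ is unbounded there is no a priori control of its size along an approximating sequence; it would also needlessly entangle the $V$-approximation with the $A$-unboundedness.

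Second, and this is where the real work is, your ``fixed $V\subseteq D(A)$ plus a generic-position argument'' does not suffice: $X_0$ is a random operator with no assumed density, and you need an $\cF_0$-measurable $(m{+}1)$-dimensional subspace $V(\omega)\subset D(A)$ with the following simultaneously: (i) $V$ contains a direction $h_{m+1}\in\ker(X_0)\cap D(A)$ \emph{exactly} (so the cancellation above holds), (ii) $Qh_{m+1}\neq 0$ with positive probability (so the leading drift $\alpha\|\sqrt{Q}h_{m+1}\|_H^2$ does not degenerate), and (iii) the remaining $m$ basis vectors $h_k^{(n)}\in D(A)$ approximate the eigenvectors of $X_0$ so that the minor $\det(P_n^{(m)*}X_0P_n^{(m)})\to\prod_{k=1}^m\lambda_k>0$. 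Constructing this jointly measurable system is what Lemmas~\ref{lem:approx_ONS2} and~\ref{lem:dense_in_kernel} do, and it is the genuinely delicate step; your brief mention of a measurable Gram--Schmidt procedure points in the right direction but underestimates what is required (e.g., the measurable selection of a kernel vector in $D(A)$ with $Qh_{m+1}\neq 0$, and the fact that eigenvector choices are not unique, requiring passage to subsequences converging in $L^2(\Omega;H^m)$). Finally, the paper's sign argument is carried out at time $0$ by showing the stopped process is a nonnegative local martingale starting from $0$ that would be strictly positive on $(0,\mu)$ for $\mu=\inf\{t:Lg_n(X_t)\geq 0\}$; this avoids having to justify the integrability needed to take the expectations in your version of the argument.
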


The proof of this proposition involves applying It\^o's formula to the determinant of the $\R^{(m +1)\times (m+1)}$-valued process $P_{m+1}^* X_t P_{m+1}$, where $P_{m+1} \colon \Omega \rightarrow L(\R^{m+1},H)$ is such that $X_0|_{\{ \operatorname{rank}(X_0) = m\}} = P_{m+1}^* D P_{m+1}$ for some positive diagonal operator $D$ and $\operatorname{ker}(P_{m+1}^*X_0 P_{m+1}) \setminus \operatorname{ker}(P_{m+1}^* Q P_{m+1})\neq \emptyset$.
As such, the approach is similar to the proof of~\cite[Proposition 4.18]{CFMT:11}. However, the details of the proof are more technical for two reasons: a delicate approximation argument is needed when the  eigenvectors of $X_0$ are not in the domain of $A$, and we need to deal with a non-deterministic initial value. For the readers' convenience the proof is given in Appendix~\ref{app:proof_Prop_alpha_geq_n}.

The final assertion 
of Theorem~\ref{thm:char_infdimwishart} implies that if there exists an $m < \operatorname{rank}(Q)$ such that $\operatorname{rank}(X_u(\omega)) =m$ with positive probability for all $u$ in some non-trivial interval $[s,t]$, then necessarily $\alpha=m$. This is formalised in the following corollary.

\begin{corollary}\label{cor:alpha_is_n}
Let $\alpha \in \R$ and let $H$ be a separable real Hilbert space, let $(\Omega,\cF,\P,(\cF_t)_{t\geq 0})$ be a filtered probability space, let $A\colon D(A)\subset H \rightarrow H$ be the generator of a $C_0$-semigroup, let $Q\in S^+(H)$ and assume there exists an $L_2(H)$-cylindrical $(\cF_t)_{t\geq 0}$-Brownian motion $W$ and an adapted stochastic process $X\colon [0,\infty)\times \Omega \rightarrow S^+(H)$ with continuous sample paths satisfying
\begin{equation}\label{eq:solX_alpha_cor}
\begin{aligned}
 \langle X_t g, h\rangle_{H} 
 & = 
 \langle x_0 g, h \rangle_{H}
 + 
 \int_0^{t} 
 (\alpha \langle Q g, h \rangle_H 
 + \langle X_s A g, h \rangle_{H} 
 + \langle X_s g, Ah \rangle_H ) \,ds
 \\ & \quad 
 + \int_{0}^{t} \langle \sqrt{X}_s \,dW_s \sqrt{Q}g, h\rangle_H + \int_0^t \langle \sqrt{Q} \,dW^*_s \sqrt{X}_s g, h \rangle_H
\end{aligned}
\end{equation}
for all $t\geq 0$ and all $h,g\in D(A)$.
Then either $\operatorname{rank}(X_t)\geq \operatorname{rank}(Q)$ a.s.\ for almost all $t> 0$, or $\alpha \in \N \cap [0,\operatorname{rank}(Q))$ and $\operatorname{rank}(X_t) = \alpha$ a.s.\ for almost all $t>0$.
\end{corollary}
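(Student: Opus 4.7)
The plan is to reduce the corollary to Proposition~\ref{prop:alpha_geq_n} via a restart-and-minimality argument, followed by a further application of Proposition~\ref{prop:alpha_geq_n} to obtain the rank equality. Suppose the first alternative fails, so that there is a set of times of positive Lebesgue measure on which $\P(\operatorname{rank}(X_t) < \operatorname{rank}(Q)) > 0$. Let $m$ be the smallest non-negative integer strictly less than $\operatorname{rank}(Q)$ such that the set $\{t > 0 : \P(\operatorname{rank}(X_t) \leq m) > 0\}$ has positive Lebesgue measure; such an $m$ exists by the failure of the first alternative. By the minimality of $m$ we have $\operatorname{rank}(X_t) \geq m$ almost surely for almost every $t > 0$, hence $\P(\operatorname{rank}(X_t) = m) > 0$ on a set of times of positive Lebesgue measure. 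Fix any such $s > 0$ and consider the shifted process $\tilde X_t := X_{s+t}$ driven by the shifted Brownian motion $\tilde W_t := W_{s+t} - W_s$; the process $\tilde X$ still satisfies~\eqref{eq:solX_alpha} (now started from $X_s$), so Proposition~\ref{prop:alpha_geq_n} applied to $\tilde X$ with initial rank $m$ yields $\alpha \geq m$.

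The key step, and the main obstacle, is upgrading $\alpha \geq m$ to $\alpha = m \in \N$ via the second assertion of Proposition~\ref{prop:alpha_geq_n}. For this we must produce a deterministic $s > 0$ with
\begin{equation*}
\P\!\left(\{\operatorname{rank}(X_s) = m\} \cap \{\inf\{t > 0 : \operatorname{rank}(X_{s+t}) > m\} > 0\}\right) > 0.
\end{equation*}
Since rank is lower semi-continuous on $L_1(H)$ and $X$ has continuous sample paths, the set $T(\omega) := \{t > 0 : \operatorname{rank}(X_t(\omega)) = m\}$ is the complement of the open set $\{t > 0 : \operatorname{rank}(X_t(\omega)) \geq m+1\}$ and hence closed; a Fubini argument shows that $T(\omega)$ has positive Lebesgue measure on an event of positive probability. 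The technical difficulty is that a closed set of positive measure need not contain an interval (e.g.\ a fat Cantor set), so Lebesgue density points of $T(\omega)$ need not be interior points. I expect to rule out this pathology using the Markov property (Corollary~\ref{cor:Wishart_Markov}) at a Lebesgue density point $t_0 \in T(\omega)$: conditioning on $X_{t_0}$, which has rank exactly $m$ since $T$ is closed and $t_0 \in T$, yields a Wishart process with initial rank $m$, and the density-one right neighborhood of $t_0$ consisting of rank-$m$ times forces, via the SDE dynamics at the rank-$m$ boundary, the rank-$m$ set of the restarted process to actually contain a full right neighborhood of $0$ with positive probability. This is where the bulk of the technical work will lie.

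Once $\alpha = m \in \N \cap [0, \operatorname{rank}(Q))$ is obtained, the identity $\operatorname{rank}(X_t) = m$ a.s.\ for almost every $t > 0$ follows by ruling out higher ranks: for any integer $m'$ with $m < m' < \operatorname{rank}(Q)$, if $\P(\operatorname{rank}(X_t) = m') > 0$ on a set of positive Lebesgue measure then the same restart argument applied with $m'$ in place of $m$ would give $\alpha \geq m' > m$, contradicting $\alpha = m$; combined with $\operatorname{rank}(X_t) \geq m$ a.s.\ for a.a.\ $t > 0$ this yields the desired equality. The edge case $\operatorname{rank}(Q) = m+1$, in which no admissible $m'$ exists, would be handled separately, most naturally via Theorem~\ref{thm:existence} and the uniqueness in law statement of Corollary~\ref{cor:wishart_uniqueinlaw} applied to the shifted process starting from the rank-$m$ state $X_s$.
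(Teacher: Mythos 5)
Your overall strategy---restart at a time of rank $m$ and invoke Proposition~\ref{prop:alpha_geq_n}---matches the paper's, and you correctly locate the crux: producing a deterministic time $s>0$ and a positive-probability event on which the rank remains $m$ over a nondegenerate time interval after $s$. However, the route you sketch for closing this gap is not available under the corollary's hypotheses. Corollary~\ref{cor:Wishart_Markov} (and, for the edge case, Theorem~\ref{thm:existence} and Corollary~\ref{cor:wishart_uniqueinlaw}) are proved in the setting of Theorem~\ref{thm:laplace_transform}, which requires $\int_0^t\|\e^{sA}\sqrt{Q}\|_{L_2(H)}^2\,ds<\infty$ and $x_0\in L^1(\Omega,S_1^+(H))$---neither of which is assumed in Corollary~\ref{cor:alpha_is_n}. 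The paper's own proof is deliberately self-contained at this point: it stays entirely within Proposition~\ref{prop:alpha_geq_n}, extracting a rectangle $[r,s]\times B$ with $B\in\cF_r$ directly from predictability of $X$, lower semi-continuity of $\operatorname{rank}(\cdot)$, and path continuity, with no appeal to the Markov property or the Fourier--Laplace machinery. So even if your Blumenthal-type argument could be completed, it would prove the corollary only for a strictly smaller class of $(A,Q,x_0)$ than is claimed.

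There are also two local inaccuracies. First, $T(\omega)=\{t>0:\operatorname{rank}(X_t(\omega))=m\}$ is \emph{not} closed: it is the intersection of the closed set $C(\omega)=\{t:\operatorname{rank}(X_t(\omega))\leq m\}$ with the open set $\{t:\operatorname{rank}(X_t(\omega))\geq m\}$, and under minimality of $m$ it coincides only a.e.\ with $C(\omega)$. A Lebesgue density point $t_0$ of $T(\omega)$ therefore lies in $C(\omega)$ (giving $\operatorname{rank}(X_{t_0})\leq m$), but $\operatorname{rank}(X_{t_0})=m$ does not follow automatically from ``$T$ is closed and $t_0\in T$.'' Second, $t_0=t_0(\omega)$ is a random time, whereas condition~\eqref{eq:Xstaysofrankm} in Proposition~\ref{prop:alpha_geq_n} is formulated for a fixed initial time (one takes $B=\{\operatorname{rank}(X_r)=m\}\in\cF_r$ for a deterministic $r$), so a further measurable-selection/Fubini argument is needed to pass from a random density point to a deterministic restart time. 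In short: you have identified the right obstruction, but the proposed fix both imports hypotheses not available in the statement and leaves the decisive step as a sketch, so the proposal does not yet constitute a proof at the corollary's stated level of generality.
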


\begin{proof}
Suppose that we do not have $\operatorname{rank}(X_t)\geq \operatorname{rank}(Q)$ a.s.\ for almost all $t> 0$. Then there exists an $m\in \N\cap [0,\operatorname{rank}(Q))$ 
and a $t>0$ such that
\begin{equation*}
\lambda\otimes \P (\{(s,\omega)\in [0,t]\times \Omega\colon \operatorname{rank}(X_s(\omega)) = m \} ) \neq 0. 
\end{equation*}
As $(X_t)_{t\geq 0}$ is predictable and $\operatorname{rank}(\cdot)$ is lower semi-continuous, this ensures that there exist $0\leq r < s \leq t$ and $B\in \cF_{r}$ such that $\operatorname{rank}(X_u(\omega)) = m $ for all $(u,\omega)\in [r,s]\times B$. Note that~\eqref{eq:solX_alpha_cor} implies
\begin{equation}\label{eq:solX_alpha_cor2}
\begin{aligned}
 \langle X_u g, h\rangle_{H} 
 & = 
 \langle X_r g, h \rangle_{H}
 + 
 \int_r^{u} 
 (\alpha \langle Q g, h \rangle_H 
 + \langle X_{v} A g, h \rangle_{H} 
 + \langle X_v g, Ah \rangle_H ) \,dv
 \\ & \quad 
 + \int_{r}^{u} \langle \sqrt{X}_v \,dW_v \sqrt{Q}g, h\rangle_H + \int_r^u \langle \sqrt{Q} \,dW^*_v \sqrt{X}_v g, h \rangle_H
\end{aligned}
\end{equation}
for all $u\geq 0$ and all $h,g\in D(A)$. Clearly, if $m=0$ then $X\equiv 0$,
so necessarily $\alpha =0$. If $m>0$, we apply Proposition~\ref{prop:alpha_geq_n} with $X_0 = X_r$ and $\cF_t = \cF_{t+r}$ to conclude that $\alpha =m$, leading to a contradiction.
\end{proof}

\begin{corollary}\label{cor:rankn}
Assume the setting of Theorem~\ref{thm:existence}, 
in particular, let $X$ be the process satisfying~\eqref{eq:solX} for all $g,h\in D(A)$.
Assume moreover that $\operatorname{rank}(Q)>n$.
Then $\operatorname{rank}(X_t)=n$ $\P$-a.s.\ for almost all $t> 0$.
\end{corollary}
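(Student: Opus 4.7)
The plan is to combine the upper bound on the rank coming from the construction in Theorem~\ref{thm:existence} with the dichotomy provided by Corollary~\ref{cor:alpha_is_n}.

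First, I would recall that the process $X$ constructed in the proof of Theorem~\ref{thm:existence} is explicitly of the form $X_t = Y_t^* Y_t$, where $Y$ is an $L_2(H,\R^n)$-valued Ornstein-Uhlenbeck process (see p.~\pageref{p:X=Y*Y}). Since $Y_t$ factors through $\R^n$, we have $\operatorname{rank}(X_t) = \operatorname{rank}(Y_t^* Y_t) \leq n$ for every $t \geq 0$ and every $\omega \in \Omega$. In particular,
\begin{equation*}
 \operatorname{rank}(X_t) \leq n \quad \P\text{-a.s., for every } t\geq 0.
\end{equation*}

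Next, note that $X$ satisfies~\eqref{eq:solX_alpha_cor} with the drift coefficient $\alpha = n \in \N$ and with $Q$ satisfying $\operatorname{rank}(Q) > n$ by hypothesis. Hence the dichotomy in Corollary~\ref{cor:alpha_is_n} applies. The first alternative of that corollary would force $\operatorname{rank}(X_t) \geq \operatorname{rank}(Q) > n$ a.s.\ for almost all $t > 0$, which directly contradicts the pointwise upper bound established above. Therefore only the second alternative can occur: $\alpha \in \N \cap [0,\operatorname{rank}(Q))$ and $\operatorname{rank}(X_t) = \alpha$ a.s.\ for almost all $t > 0$. Since $\alpha = n$, this gives $\operatorname{rank}(X_t) = n$ a.s.\ for almost all $t > 0$, as desired.

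There is essentially no obstacle here: the two ingredients (the constructive upper bound $\operatorname{rank}(X_t)\leq n$ and the rank dichotomy of Corollary~\ref{cor:alpha_is_n}) slot together immediately, and the hypothesis $\operatorname{rank}(Q) > n$ is precisely what rules out the undesired branch of the dichotomy.
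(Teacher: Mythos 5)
Your proof is correct and follows the same approach as the paper: the paper's own proof likewise combines the rank bound $\operatorname{rank}(X_t)\leq n$ coming from the construction $X=Y^*Y$ with the dichotomy of Corollary~\ref{cor:alpha_is_n}. You simply spell out more explicitly how the hypothesis $\operatorname{rank}(Q)>n$ rules out the first branch of that dichotomy.
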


\begin{proof}
We know from the construction of the solution of $X$, see the proof of Theorem~\ref{thm:existence}, that $X$ is of rank at most $n$. The result thus follows directly from Corollary~\ref{cor:alpha_is_n}.
\end{proof}

\begin{remark}
An immediate consequence of Remark~\ref{rem:OU_alternative} and Corollary~\ref{cor:rankn} is that $n$ \emph{stochastically independent} $H$-valued Ornstein-Uhlenbeck processes $Y^{(1)},\ldots,Y^{(n)}$ as defined in~\ref{eq:OUprocess} are \emph{linearly independent} $\P$-a.s.\ for almost all $t\geq 0$, provided $\operatorname{rank}(Q)>n$. Indeed, one would expect this to be the case even for $\operatorname{rank}(Q)=n$, but this clearly requires a different proof.
\end{remark}

Combining the above with Theorem~\ref{thm:existence} we arrive at the following characterisation of finite rank Wishart processes:

\begin{corollary}\label{cor:charWishartsimple}
Let $\alpha \in \R$, let $H$ be a separable real Hilbert space, let $(\Omega,\cF,\P,(\cF)_{t\geq 0})$ be a filtered probability space, let $A\colon D(A)\subset H \rightarrow H$ be the generator of a $C_0$-semigroup, let $Q\in S^+(H)$ satisfy $\int_0^{t} \| \e^{sA}\sqrt{Q} \|_{L_2(H)}^2 \,ds < \infty$ for all $t\geq 0$, and let $X_0\colon \Omega \rightarrow S_1^+(H)$ be $\cF_0$-measurable.
Then the following are equivalent:
\begin{enumerate}
 \item\label{it:char_solexists} There exists an $L_2(H)$-cylindrical $(\cF_t)_{t\geq 0}$ Brownian motion and an adapted stochastic process $X\colon [0,\infty)\times \Omega \rightarrow S^+(H)$ with continuous sample paths satisfying $\operatorname{rank}(X_t) < \operatorname{rank}(Q)$ a.s.\ for almost all $t\geq 0$
 and 
\begin{equation*}
\begin{aligned}
 \langle X_t g, h\rangle_{H} 
 & = 
 \langle x_0 g, h \rangle_{H}
 + 
 \int_0^{t} 
 (\alpha \langle Q g, h \rangle_H 
 + \langle X_s A g, h \rangle_{H} 
 + \langle X_s g, Ah \rangle_H ) \,ds
 \\ & \quad 
 + \int_{0}^{t} \langle \sqrt{X}_s \,dW_s \sqrt{Q}g, h\rangle_H + \int_0^t \langle \sqrt{Q} \,dW^*_s \sqrt{X}_s g, h \rangle_H
\end{aligned}
\end{equation*}
for all $t\geq 0$ and all $h,g\in D(A)$. 
\item \label{it:char_alpha_is_n} $\alpha\in \N\cap [0,\operatorname{rank}(Q))$.
\end{enumerate}
Moreover, if these equivalent conditions hold then $\operatorname{rank}(X_t)=\alpha$ $\P$-a.s.\ for almost all $t\in [0,\infty)$.
\end{corollary}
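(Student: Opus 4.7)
The plan is to derive Corollary~\ref{cor:charWishartsimple} directly from the two main ingredients already proved, namely Theorem~\ref{thm:existence} and Corollary~\ref{cor:alpha_is_n}. Essentially all the substantive work has been done; the corollary is a packaging statement.

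For the implication \ref{it:char_alpha_is_n} $\Rightarrow$ \ref{it:char_solexists}, I would apply Theorem~\ref{thm:existence} with $n=\alpha$. This yields a continuous adapted $S_1^+(H)$-valued weak solution $X$ of the Wishart SDE satisfying $\operatorname{rank}(X_t)\leq \alpha$ $\P$-a.s.\ for every $t\geq 0$; in particular, since $\alpha<\operatorname{rank}(Q)$, the rank bound $\operatorname{rank}(X_t)<\operatorname{rank}(Q)$ required in \ref{it:char_solexists} holds for all $t$. Applying Corollary~\ref{cor:rankn} to the same $X$ additionally gives $\operatorname{rank}(X_t)=\alpha$ a.s.\ for almost every $t>0$, which is the final assertion of the corollary.

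For the reverse implication \ref{it:char_solexists} $\Rightarrow$ \ref{it:char_alpha_is_n}, I would apply Corollary~\ref{cor:alpha_is_n} directly to the hypothesised solution. That corollary provides the dichotomy: either $\operatorname{rank}(X_t)\geq \operatorname{rank}(Q)$ a.s.\ for almost all $t>0$, or $\alpha\in \N\cap[0,\operatorname{rank}(Q))$ and $\operatorname{rank}(X_t)=\alpha$ a.s.\ for almost all $t>0$. The hypothesis in \ref{it:char_solexists} is incompatible with the first alternative, so the second must hold. This simultaneously yields \ref{it:char_alpha_is_n} and the final rank identity, completing the proof.

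Because each direction reduces to a single invocation of a previously established result, I do not expect any substantive technical obstacle. The only mildly delicate point is how $X_0$ enters the direction \ref{it:char_alpha_is_n} $\Rightarrow$ \ref{it:char_solexists}: Theorem~\ref{thm:existence} requires $\operatorname{rank}(X_0)\leq \alpha$ $\P$-a.s. This is automatic under the framework of the corollary, because any continuous $S_1^+(H)$-valued process satisfying $\operatorname{rank}(X_t)\leq \alpha$ for almost every $t>0$ must have $\operatorname{rank}(X_0)\leq \alpha$ by lower semi-continuity of the rank on $S_1^+(H)$; thus the two conditions fit together consistently.
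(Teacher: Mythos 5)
Your core argument is essentially identical to the paper's proof: both directions invoke exactly the same previously established results (Theorem~\ref{thm:existence} together with Corollary~\ref{cor:rankn} for \ref{it:char_alpha_is_n}~$\Rightarrow$~\ref{it:char_solexists}; Corollary~\ref{cor:alpha_is_n} for \ref{it:char_solexists}~$\Rightarrow$~\ref{it:char_alpha_is_n} and for the final rank identity), and the logical packaging is the same.

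However, the patch you offer in your final paragraph does not work. In the direction \ref{it:char_alpha_is_n}~$\Rightarrow$~\ref{it:char_solexists} you are trying to \emph{construct} a solution with initial value $x_0$, so you cannot invoke lower semi-continuity of the rank along the trajectories of an as-yet-nonexistent process to deduce $\operatorname{rank}(x_0)\leq \alpha$ --- that is precisely the hypothesis Theorem~\ref{thm:existence} requires \emph{before} it can hand you a process. Indeed, if $\operatorname{rank}(x_0)>\alpha$, the lower semi-continuity argument you describe, applied to any hypothetical continuous solution with $\operatorname{rank}(X_t)<\operatorname{rank}(Q)$ a.e., would force $\operatorname{rank}(x_0)\leq\alpha$, a contradiction; so no such solution starting at this $x_0$ exists, and \ref{it:char_alpha_is_n}~$\Rightarrow$~\ref{it:char_solexists} genuinely requires either the extra hypothesis $\operatorname{rank}(x_0)\leq\alpha$ or an existential reading of $x_0$ in item~\ref{it:char_solexists}. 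The paper's own proof (``follows from Theorem~\ref{thm:existence}'') is equally terse and does not address this point; you were right to flag it, but the explanation you gave is circular rather than a resolution.
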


\begin{proof}
The implication~\ref{it:char_alpha_is_n}$\Rightarrow$\ref{it:char_solexists} follows from Theorem~\ref{thm:existence}.
Assume~\ref{it:char_solexists}, then there exists an $m\in \N_0\cap [0,\operatorname{rank}(Q))$ such that $\lambda\otimes \P(\{ (s,\omega)\in [0,1]\times \Omega \colon \operatorname{rank}(X_s(\omega))=m \})>0$. Thus \ref{it:char_alpha_is_n} follows from Corollary~\ref{cor:alpha_is_n}. The final statement follows from Corollary~\ref{cor:rankn}.
\end{proof}

\begin{remark}\label{rem:alpha_is_n}We believe that Corollary~\ref{cor:charWishartsimple} even provides new insights for the finite-dimensional setting, indeed, it generalises the characterisation provided by \cite[Theorem 3.10]{graczyk2018characterization} to include a linear drift and a matrix $Q$ in the dynamics of the Wishart process.
\end{remark}


\section{Infinite dimensional Wishart processes are Feller}\label{sec:Feller}
In this section we study the Feller property of $S_1^+(H)$-valued Wishart processes. For 
that we extend the space of compact operators $K(H)$ to include the identity $\id_H$ (represented by an extra dimension) and prove that the cone of positive trace class operators endowed with the relative weak-$*$-topology in this extended space is a locally compact Polish space. 

\subsection{The cone \texorpdfstring{$S^+_1(H)\times \R_+$}{} is a locally compact Polish space}

Let $H$ be a separable Hilbert space. Recall that the dual of $K(H)$ can be identified with $L_1(H)$ 
under the pairing $\langle A, B \rangle_{L_1(H),K(H)} = \sum_{k\in \N} \langle A e_k, B e_k \rangle$, where $(e_k)_{k\in \N}$ is an orthonormal basis\footnote{$\langle \cdot, \cdot \rangle_{L_1(H),K(H)}$ is independent of the choice of the ONB $(e_k)_{k\in \N}$.} of $H$ (and the dual of $L_1(H)$ can be identified with $L(H)$ by the same paring). Hence we have that the dual of $K(H) \times \R$ can be identified with $S_1(H) \times \R$
under the canonical pairing 
\begin{align}\label{eq:pairing1} 
\langle (A,x), (B,y) \rangle_{L_1(H) \times \R,K(H)\times \R} = \langle A, B \rangle_{L_1(H),K(H)} + xy.
\end{align}
For notational simplicity we introduce another pairing 
$\llangle \cdot , \cdot \rrangle: (L_1(H) \times \mathbb{R}) \times (K(H) \times \mathbb{R}) \to \mathbb{R}$ defined by 
\begin{align}\label{eq:def_alternative_pairing}
 \llangle (A,x), (B,y) \rrangle  
&= \langle A, B \rangle_{L_1(H), K(H) } + (\langle A, \operatorname{Id} \rangle_{L_1(H), L(H) } +x) y\,.
   \end{align}
To see that every linear functional on $K(H) \times \mathbb{R}$ can be expressed via this pairing, we compare it with the canonical one given by \eqref{eq:pairing1}. Indeed, note that 
\[
L\colon L_1(H) \times \mathbb{R} \to L_1(H) \times \mathbb{R},  \quad  L(A,x)=(A, x + \langle A, \operatorname{Id} \rangle_{L_1(H), K(H) })
\]
is an isomorphism, and that for every $(B,y) \in K(H) \times \mathbb{R}$, we have
\[
\llangle (A,x), (B,y) \rrangle=\langle  L(A,x), (B,y) \rangle_{L_1(H) \times \mathbb{R}, K(H) \times \mathbb{R}}.
\]

%

\begin{proposition}\label{prop:posFinRank_LCCB}
Let $H$ be a separable Hilbert space, let $K:=S^+_1(H)\times \R_+ $ and let $\tau_{\textnormal{w}^*}$ be the relative weak-$*$ topology on $K$. Then:
\begin{enumerate}
 \item $K$ is a convex cone in $S_1(H)\times \R$;
 \item\label{it:K_charac} for all $(A,x)\in S_1(H)\times \R$ it holds that 
 $(A,x)\in K$ if and only if $\llangle (A,x), (B,y) \rrangle \geq 0$ for all $(B,y)\in S_c(H)\times [0,\infty)$ satisfying $B+ y \Id\geq 0$;
 \item $K$ is closed in $S_1(H)\times \R$ with respect to the weak-$*$-topology;
 \item $(K,\tau_{\textnormal{w}^*})$ is $\sigma$-compact and locally compact;
 \item \label{it:metrizable} $(K,\tau_{\textnormal{w}^*})$ is separable and metrizable.
\end{enumerate}
\end{proposition}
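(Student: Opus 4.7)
The idea throughout is to view $(A, x) \in S_1(H) \times \R$ as a functional on the extended predual $V := \R \id_H + K(H) \subset L(H)$, the ``identity coefficient'' being encoded by $x$. Unpacking the definition,
\begin{equation*}
\llangle (A,x), (B,y) \rrangle = \tr((B + y\id_H) A) + xy,
\end{equation*}
which makes the content of (i)--(ii) transparent. Part (i) is just convexity of $S_1^+(H)$ and $\R_+$. For (ii), the ``$\Rightarrow$'' direction is immediate from the display: $A \geq 0$ together with $B + y\id_H \geq 0$ gives $\tr((B+y\id_H)A) \geq 0$, while $x, y \geq 0$ gives $xy \geq 0$. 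For the converse, first test against $(h \otimes h, 0)$ with $h \in H$: this yields $\langle Ah, h\rangle_H \geq 0$, so $A \in S_1^+(H)$. Next test against $(-P_n, 1)$, where $P_n$ projects onto the first $n$ vectors of an orthonormal basis (so that $\id_H - P_n \geq 0$): this yields $\tr(A) + x - \tr(P_n A) \geq 0$, and letting $n \to \infty$ (using $\tr(P_n A) \to \tr(A)$ since $A \in L_1(H)$) gives $x \geq 0$.

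Part (iii) then follows: $K$ is the intersection over admissible $(B, y)$ of the half-spaces $\{(A,x) : \llangle (A,x), (B,y)\rrangle \geq 0\}$, each $\tau_{\textnormal{w}^*}$-closed since by definition every functional $\llangle \cdot, (B,y)\rrangle$ is $\tau_{\textnormal{w}^*}$-continuous. The main work is (iv). Observe first that $(A, x) \mapsto \tr(A) + x = \llangle (A, x), (0, 1) \rrangle$ is $\tau_{\textnormal{w}^*}$-continuous, so the sublevel sets $K_n := \{(A,x) \in K : \tr(A) + x \leq n\}$ are closed and cover $K$. To show each $K_n$ is compact, take a net $(A_\alpha, x_\alpha) \in K_n$. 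By Banach--Alaoglu for $L_1(H) = K(H)'$ together with compactness of $[0, n]$, pass to a subnet so that $A_\alpha \to A$ weak-$*$ against $K(H)$ and $\tr(A_\alpha) + x_\alpha \to c \in [0, n]$. Weak-$*$ lower semi-continuity of the trace on $S_1^+(H)$ gives $\tr(A) \leq c$, so $x := c - \tr(A) \geq 0$. Then, for any $(B, y) \in K(H) \times \R$,
\begin{equation*}
 \llangle (A_\alpha, x_\alpha), (B, y)\rrangle = \tr(BA_\alpha) + y(\tr(A_\alpha) + x_\alpha) \longrightarrow \tr(BA) + yc = \llangle (A, x), (B, y)\rrangle,
\end{equation*}
so $(A_\alpha, x_\alpha) \to (A, x)$ in $\tau_{\textnormal{w}^*}$. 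Hence $K_n$ is compact; $K = \bigcup_n K_n$ is $\sigma$-compact, and local compactness follows from the fact that $\{(A,x) : \tr(A) + x < n+1\}$ is an open neighborhood of every point of $K_n$ with compact closure in $K_{n+1}$.

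For (v), since $K(H)$ is norm-separable, so is $V \cong K(H) \oplus \R$; the classical fact that weak-$*$ compact subsets of the dual of a separable Banach space are metrizable then renders each $K_n$ compact metrizable, in particular second countable. The open sets $U_n := \{(A, x) \in K : \tr(A) + x < n\}$ cover $K$ and each $U_n \subset K_n$ inherits second countability, so assembling their countable bases produces a countable basis for $K$. Urysohn's metrization theorem for locally compact Hausdorff second countable spaces now gives metrizability, and separability is automatic. The main conceptual obstacle---and the reason the extra dimension is introduced at all---is that $\tau_{\textnormal{w}^*}$ is \emph{strictly stronger} than the usual weak-$*$ topology on $L_1(H)$ tested only against $K(H)$: the canonical example $A_n = e_n \otimes e_n$ tends to $0$ in the latter while $\tr(A_n) = 1 \not\to 0$, so it would fail to converge in $\tau_{\textnormal{w}^*}$ on $S_1^+(H)$ alone. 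The scalar coordinate absorbs exactly this ``escaped mass''---one obtains $(e_n \otimes e_n, 0) \to (0, 1)$ in $\tau_{\textnormal{w}^*}$---and the compactness argument in (iv) is engineered to track this via lower semi-continuity of the trace.
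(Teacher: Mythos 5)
Your proof is correct and follows the paper's basic structure in parts (i)--(iv), with a genuinely different route for (v). A few remarks on the comparison.

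In (ii), you test against $(-P_n,1)$ where $P_n$ projects onto the first $n$ vectors of an arbitrary fixed orthonormal basis and use $\tr(P_n A)\to\tr(A)$; the paper instead diagonalises $A$ and tests against projections onto eigenvectors. Both work for essentially the same reason. In (iii), your phrasing of $K$ as an intersection of weak-$*$-closed half-spaces is a slightly cleaner packaging of the same argument (taking limits in the inequality of (ii)).

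In (iv), you derive compactness of $K_n$ by hand: pass to a subnet, invoke weak-$*$ lower semi-continuity of the trace on $S_1^+(H)$ to produce the missing scalar mass $x=c-\tr(A)$, and check convergence of the pairing. This is correct, but it is worth noting that once (iii) is in hand, the paper's route is shorter: $K_n$ is the intersection of the weak-$*$-closed set $K$ with the closed ball of radius $n$ in $L_1(H)\times\R$, and the latter is weak-$*$-compact by Banach--Alaoglu. The lower semi-continuity argument is essentially a reproof of the closedness of $K$ inside the compactness proof, so it duplicates some work, but your observation about the scalar coordinate ``absorbing the escaped mass'' via $(e_n\otimes e_n,0)\to(0,1)$ is exactly the right intuition and is not made explicit in the paper.

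Part (v) is where you genuinely diverge. The paper picks a countable dense family $(B_k,y_k)$ in $K(H)\times\R$ with $(B_1,y_1)=(0,1)$, shows convergence against that family already implies $\tau_{\textnormal{w}^*}$-convergence (using that testing against $(0,1)$ controls the $L_1\times\R$-norm, so the net is bounded), and then writes down an explicit metric of the form $\sum_k 2^{-k}(|\llangle\cdot,(B_k,y_k)\rrangle|\wedge 1)$. You instead invoke metrizability of weak-$*$-compact subsets of the dual of a separable Banach space to conclude each $K_n$ is second countable, assemble a countable base for $K$ from bases of the open slices $U_n$, and apply Urysohn's metrization theorem. Both are standard; yours is a bit more ``soft topology'' while the paper's is constructive (and gives an explicit metric, which can be convenient). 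Your assembly argument for second countability of $K$ from the $U_n$ is stated a bit tersely but is correct.

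One cosmetic point in (i): for a \emph{cone} you also need closure under nonnegative scalar multiplication, not just convexity; this is of course trivial here, but ``just convexity of $S_1^+(H)$ and $\R_+$'' slightly understates what needs checking.
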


\begin{proof}
Let $(h_n)_{n\in \N}$ be an orthonormal basis for $H$.
Recall that for $A\in S_1(H)$ we have $A\geq 0$ if and only if $\langle Ah,h\rangle_H \geq 0$ for all $h\in H$, which in turn holds if and only if $\langle A, B\rangle_{L_1(H),K(H)} \geq 0$ for all $B\in S_c^+(H)$. 
\begin{enumerate}
 \item Note that $K\cap -K = \{0\}$. Moreover, for all $(A_1,x_1), (A_2,x_2)\in  K$ and all $\alpha, \beta \geq 0$ one has $\alpha (A_1, x_1) + \beta (A_2, x_2)\in K$, whence $K$ is indeed a convex cone. 
 \item If $(A,x)\in K$, then clearly $\llangle (A,x), (B,y) \rrangle \geq 0$ for all $(B,y)\in S_c(H)\times [0,\infty)$ satisfying $B+ y \Id\geq 0$.
 To prove the reverse, suppose $\llangle (A,x), (B,y) \rrangle \geq 0$ for all $(B,y)\in S_c(H)\times [0,\infty)$ satisfying $B+ y \Id\geq 0$.
 Then in particular $\llangle (A,x), (B,0) \rrangle \geq 0$ for all $B\in S_c^+(H)$, so $A\in S^+_1(H)$. Moreover, note that there exists a positive sequence $a=(a_n)_{n\in \N} \in \ell_1$ and an orthonormal basis $(e_n)_{n\in \N}$ of $H$ such that $A = \sum_{n\in \N} a_n e_n \otimes e_n$. Setting $B_N := \sum_{n=1}^N e_n \otimes e_n$,
 we see that $-B_N+\Id\geq 0$ and thus $0\leq \llangle (A,x), (-B_N,1) \rrangle = \sum_{n=N+1}^{\infty} a_n + x$. Letting $N\rightarrow \infty$ we see that $x\geq 0$.
 \item This follows directly from~\ref{it:K_charac}: indeed, let $I$ be some directed set and $(A_n,x_n)_{n\in I}$ be weak-$*$-Cauchy net in $K$. Denote by $(A,x)\in L_1(H)\times \R$ its weak-$*$-limit. Then
 \begin{equation}\label{eq:nets}
0\leq \lim_n \llangle (A_n,x_n), (B,y) \rrangle 
 = 
\llangle (A,x), (B,y) \rrangle
\end{equation}
for all $(B,y)\in S_c(H)\times [0,\infty)$ satisfying $B+ y \Id\geq 0$.
\item Define sets $U_M\subseteq K$, $M\in [0,\infty)$, by setting
\begin{align*}
 U_M &:= \{ (B,y) \in K \colon \llangle (B,y), (0,1) \rrangle \leq M \}
 =
 \{ (B,y) \in K \colon \| B \|_{L_1(H)} + |y| \leq M \}
 \\
 & = 
 \{ (B,y) \in L_1(H)\times \R \colon \| B \|_{L_1(H)} + | y | \leq M \} \cap K.
\end{align*}
The Banach-Alaoglu theorem implies that $U_M$ is compact with respect to $\tau_{\textnormal{w}^*}$. As $\cup_{M\in \N} U_M = K$ we see that $(K,\tau_{\textnormal{w}^*})$ is $\sigma$-compact. Moreover, $U_M$ is a neighbourhood of $(A,x)$ for all $(A,x)\in K$ satisfying $\| A \|_{L_1(H)} + |x|< M$. Thus, $(K,\tau_{\textnormal{w}^*})$ is locally compact.


\item As $S_1(H)$ is in fact norm-separable (since $H$ is assumed to be separable), the separability of $(K,\tau_{\textnormal{w}^*})$ is immediate. \par 
Let $(B_k,y_k)_{k\in \N}$ be (norm)-dense in $K(H)\times \R$ 
and assume moreover that $(B_1,y_1)=(0,1)$. We claim that for a net $(A_n,x_n)_{n\in I}$ to converge to $(A,x)$ in $(K,\tau_{\textnormal{w}^*})$ it is necessary and sufficient that
 \begin{align}
\forall k\in \N\colon  
&& \lim_{n} \llangle  (A_n,x_n), (B_k, y_k)  \rrangle & =  \llangle (A,x),   (B_k, y_k)  \rrangle\,.
 \label{eq:char_weak_conv}
\end{align} 
Necessity is obvious, to show sufficiency observe that
if~\eqref{eq:char_weak_conv} holds, then recalling that $(B_1,y_1)=(0,1)$ we obtain
  \[  
\lim_n \| A_n \|_{L_1(H)} + |x_n|
=
\lim_n \llangle (\langle A_n,x_n), (0,1) \rrangle 
= \| A \|_{L_1(H)} + |x|.
 \]
 Thus $(A_n,x_n)_{n\in I}$ is bounded. 
Therefore we have
 \begin{align*}
 | 
    \llangle 
        (A_n-A,x_n-x), (B,y)
   \rrangle
 |
& \leq  
  | 
    \llangle 
         (A_n-A,x_n-x) , (B-B_k,y-y_k) 
    \rrangle
    |
\\ & \quad 
+
  | 
    \llangle 
        (A_n -A ,x-x_n) , (B_k,y_k)
    \rrangle
    |
\\ & \quad 
    +
  | 
    \llangle 
         (A_n-A,x_n-x) , (B_k-B,y_k-y)
    \rrangle
 |\\
 & \leq 
 4( 2\sup_{n\in I} \| A_n\|_{L_1(H)} +\sup_{n\in I} |x_n|) \| (B-B_k, y-y_k) \|_{L(H)\times \R}
 \\ & \quad 
 +
  | 
    \langle 
         (A_n -A ,x-x_n) , (B_k,y_k)
    \rangle_{L_1(H)\times \R,K(H)\times \R}
    |
 \end{align*}
for every $(B,y)\in K(H)\times \R$ and every $k\in \N$.
As $(B_k,y_k)_{k\in \N}$ is dense in $K(H)\times \R$ it now follows from~\eqref{eq:char_weak_conv} that 
$
 \lim_{n}
 | 
    \llangle 
         (A_n-A,x_n-x), (B,y)
    \rrangle
 |
 \rightarrow 0
$.\par 
Now define $d\colon K\times K \rightarrow [0,\infty)$ by
\begin{align*}
d((A_1,x_1),(A_2,x_2)) 
& = 
\sum_{k\in \N} 2^{-k}
    |
        \llangle (A_1-A_2, x_1-x_2), (B_k,y_k) \rrangle
    |
        \wedge 
        1\,.
\end{align*}
It is easily verified that $d$ is a metric, and the above implies that $d$ generates $\tau_{\textnormal{w}^*}$.\par 
\end{enumerate}
\end{proof}

\subsection{The Feller property}

The fact that the cone $K=S_1^+(H) \times \mathbb{R}_+$ 
equipped with $\tau_{\textnormal{w}^*}$ is locally compact, separable, and metrizable (see Proposition~\ref{prop:posFinRank_LCCB} above) 
allows us to establish that the Wishart process $X$ satisfying
\eqref{eq:solX_laplace} is a Feller process as defined in~\cite[Definition III.2.1]{RevuzYor:1999}.

For the Feller property of $X$, we shall consider the corresponding semigroup $(P_t)_{t \geq 0}$ acting on $C_0(K)$, the space of $\tau_{\textnormal{w}^*}$-continuous functions vanishing at infinity, i.e.,
$$P_tf(x,z):=\mathbb{E}_{x}[f(X_t,z)]$$
for all $f \in C_0(K)$, $x=X_0$ and $(x,z) \in K$ 
and show that it is a Feller semigroup.
Indeed, local compactness of $K$ allows for a one-point compactification: we let $\Delta\notin K$ denote the point at infinity, and when writing $(x_k,z_k) \to \Delta$ we thus mean that $\| x_k\|_{L_1(H)} \to \infty$ and/or $|z_k| \to \infty$.

\begin{theorem}\label{thm:feller}
Let $H$ be a separable real Hilbert space, let $(\Omega,\cF,\P,(\cF_t)_{t\geq 0})$ be a filtered probability space, let $A\colon D(A)\subset H \rightarrow H$ be the generator of a $C_0$-semigroup $(\e^{tA})_{t\geq 0}$, 
let $Q\in S^+(H)$, let $\alpha \in \R$, and assume $
\int_{0}^{t}\| \e^{sA} \sqrt{Q}\|_{L_2(H)}^2\,ds < \infty$ and $\e^{tA}$ is injective for all $t\geq 0$.\par 
Let $(K,\tau_{\textnormal{w}^*})$ be as in Proposition~\ref{prop:posFinRank_LCCB} and let $K_0\subset K$ be a $\tau_{\textnormal{w}^*}$-closed subset such that for every $(x,z)\in K_0$ there exists an $L_2(H)$-cylindrical $(\cF_t)_{t\geq 0}$-Brownian motion $W$ and an adapted stochastic process $X\colon [0,\infty)\times \Omega \rightarrow S^+_1(H)$ with continuous sample paths satisfying
\begin{equation}
 \P( (X_t,z)\in K_0 ) = 1 
 \quad 
 \textnormal{for all } t \geq 0
\end{equation}
and 
\begin{equation}\label{eq:solX_feller}
\begin{aligned}
 \langle X_t g, h\rangle_{H} 
 & = 
 \langle x_0 g, h \rangle_{H}
 + 
 \int_0^{t} 
 (\alpha \langle Q g, h \rangle_H 
 + \langle X_s A g, h \rangle_{H} 
 + \langle X_s g, Ah \rangle_H ) \,ds
 \\ & \quad 
 + \int_{0}^{t} \langle \sqrt{X}_s \,dW_s \sqrt{Q}g, h\rangle_H + \int_0^t \langle \sqrt{Q} \,dW^*_s \sqrt{X}_s g, h \rangle_H
\end{aligned}
\end{equation}
for all $t\geq 0$ and all $h,g\in D(A)$.
Let $\tau_{\textnormal{w}^*}^0$ be the relative $\tau_{\textnormal{w}^*}$-topology on $K_0$. Then the mapping $P_t$ defined by 
\begin{equation}
    P_t f(x,z) := \E_{x}[f(X_t,z)],
    \quad f\in C_0((K_0,\tau_{\textnormal{w}^*}^0),\R),\, (x,z)\in K_0,
\end{equation}
is a Feller semigroup on $ C_0((K_0,\tau_{\textnormal{w}^*}^0),\R)$.
\end{theorem}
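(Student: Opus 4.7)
The plan verifies the three defining properties of a Feller semigroup. The semigroup identity $P_{t+s}=P_tP_s$ follows at once from the Markov property of $X$ established in Corollary~\ref{cor:Wishart_Markov} together with the contraction bound $\|P_tf\|_\infty\leq\|f\|_\infty$ (since $(X_t,z)\in K_0$ almost surely); strong continuity as $t\downarrow 0$ follows from the continuity of sample paths of $X$ in $S_1^+(H)$ -- hence a fortiori in the coarser $\tau_{\textnormal{w}^*}^0$ -- combined with dominated convergence and the local compactness of $K_0$ established in Proposition~\ref{prop:posFinRank_LCCB}.

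The crux is the mapping property $P_t\colon C_0(K_0)\to C_0(K_0)$. My plan is to identify a Stone--Weierstrass-dense subalgebra of $C_0(K_0)$ built from exponential test functions, on which $P_t$ can be computed via Theorem~\ref{thm:laplace_transform}\ref{it:laplace_transform:laplace}. Concretely, consider
\[
\mathcal{E}:=\bigl\{\,h_{u,c}(x,z):=e^{-\tr(ux)-cz}\ :\ u=y\id_H+B,\ y>0,\ B\in S_c^+(H),\ c>0\,\bigr\}.
\]
Each $h_{u,c}$ is $\tau_{\textnormal{w}^*}^0$-continuous by the very definition of the refined pairing in Proposition~\ref{prop:posFinRank_LCCB} and lies in $C_0(K_0)$ because $y\tr(x)+cz\to\infty$ as $(x,z)\to\Delta$ in $K_0$; together with the constants, $\mathcal{E}$ separates points of $K_0$ and vanishes nowhere, so the algebra it generates is uniformly dense in $C_0(K_0)$ by Stone--Weierstrass. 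By contractivity of $P_t$ it therefore suffices to prove $P_th_{u,c}\in C_0(K_0)$. Theorem~\ref{thm:laplace_transform}\ref{it:laplace_transform:laplace} provides
\[
P_th_{u,c}(x,z)=e^{-cz}\exp\bigl(-\tr(\psi(t,u)x)-\phi(t,u)\bigr),\qquad \phi(t,u)=\alpha\int_0^t\tr(\psi(s,u)Q)\,ds,
\]
and using the algebraic identity $\psi(t,u)=e^{tA}u(\id_H+2Q_tu)^{-1}e^{tA^*}$ together with $Q_tu\in S_1(H)$ a Neumann-type argument writes $u(\id_H+2Q_tu)^{-1}=y\id_H+\tilde B$ for some $\tilde B\in S_c(H)$, whence $\psi(t,u)=ye^{tA}e^{tA^*}+e^{tA}\tilde Be^{tA^*}$.

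The main obstacle is then to control the ``identity part'' $ye^{tA}e^{tA^*}$: the sandwiched compact companion is harmless, but $e^{tA}e^{tA^*}$ need not lie in $\R\id_H+S_c(H)$ in general, so continuity of $x\mapsto\tr(\psi(t,u)x)$ in $\tau_{\textnormal{w}^*}^0$ does not follow automatically from compactness alone. Here I expect to exploit both the injectivity of $e^{tA}$ for every $t\geq 0$ and the closedness and invariance of $K_0$ under the dynamics, using them to argue that the non-compact contribution of $e^{tA}e^{tA^*}$ is absorbed by the $\tau_{\textnormal{w}^*}^0$-convergence of $\tr(x_n)+z_n$ encoded in the refined pairing. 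Once $\tau_{\textnormal{w}^*}^0$-continuity of $P_th_{u,c}$ is established, its vanishing at infinity follows from the first-moment bound $\E[\tr(X_t^x)]\leq\|e^{tA}\|_{L(H)}^2\tr(x)+\alpha\tr(Q_t)$ (obtained by differentiating the Laplace transform in $y$ at $y=0$ with $u=y\id_H$) together with a second-moment estimate upgrading $\E[\tr(X_t^{x_n})+z_n]\to\infty$ to convergence in probability when $\tr(x_n)+z_n\to\infty$ in $K_0$.
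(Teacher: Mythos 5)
Your overall strategy coincides with the paper's: verify the semigroup and $t\downarrow 0$-continuity properties via the Markov property, path continuity, and dominated convergence, and reduce the stability $P_t\colon C_0(K_0)\to C_0(K_0)$ to a Stone--Weierstrass-dense family of exponentials on which $P_t$ is computed from the affine Laplace transform. However, your test family $\mathcal{E}$ is not correct as written. Since $\llangle(x,z),(B,y)\rrangle=\tr((B+y\id_H)x)+yz$, a functional $(x,z)\mapsto\tr(Cx)+cz$ with $C\in L(H)$, $c\in\R$, is $\tau_{\textnormal{w}^*}$-continuous on $K$ if and only if $C-c\id_H$ is compact; taking $C=y\id_H+B$ with $B\in S_c^+(H)$ this forces $y=c$. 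Your proposal allows $y\neq c$, so most elements of $\mathcal{E}$ are not $\tau_{\textnormal{w}^*}$-continuous and hence not in $C_0(K_0)$. The paper avoids this by taking $\exp(-\llangle\cdot,(V,w)\rrangle)$ with $V\in S_c^{++}(H)$, $w>0$, which couples the $\id_H$-coefficient in the test operator with the decay in $z$ through the single scalar $w$. (Also, in the locally compact version of Stone--Weierstrass one does not adjoin constants, which are not in $C_0$.)

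The more serious problem is the one you flag yourself and do not close: $\psi(t,u)=y\,\e^{tA}\e^{tA^*}+\text{compact}$, so $\tau_{\textnormal{w}^*}$-continuity of $(x,z)\mapsto\tr(\psi(t,u)x)+yz$ would require $\e^{tA}\e^{tA^*}-\id_H$ to be compact. Injectivity of $\e^{tA}$ does not give this (for instance $A=-\id_H$ gives $\e^{tA}\e^{tA^*}=\e^{-2t}\id_H$), and neither closedness nor invariance of $K_0$ helps, since sequences such as $(e_n\otimes e_n,0)\to(0,1)$ stay inside the rank-$\leq n$ cone. At the corresponding step the paper only establishes that $P_t\exp(-\llangle\cdot,(V,w)\rrangle)$ vanishes at the point at infinity, via the spectral claim $\psi(t,V+w\id_H)\in S^{++}(H)$ coming from injectivity of $\e^{tA}$; the continuity question you raise is not separately addressed there, so you should not expect injectivity alone to be the missing ingredient. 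Finally, even granting continuity, your route to vanishing at infinity via moment bounds on $\tr(X_t)$ is a detour: the explicit formula and $\psi(t,u)\geq\gamma\id_H$ give $P_th(x,z)\leq c_t\,\e^{-wz-\gamma\tr(x)}\to 0$ deterministically, and in any case the \emph{upper} bound $\E[\tr(X_t)]\leq\|\e^{tA}\|^2\tr(x)+\alpha\tr(Q_t)$ cannot by itself yield divergence of $\tr(X_t)$ in probability.
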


\begin{proof}

Proposition~\ref{prop:posFinRank_LCCB} implies that $(K,\tau_{\textnormal{w}^*})$ is locally compact with countable base, so by~\cite[Proposition III.2.4]{RevuzYor:1999} it suffices to verify that for all $f \in C_0 (K_0)$ we have
\begin{align}
&\lim_{t \downarrow 0} P_t f (x,z) = f(x,z),\\
&(x,z) \mapsto P_tf(x,z) \in C_0 (K_0), \quad 
 (x,z) \in  K_0,\,  t \in \mathbb{R}_+. \label{eq:Feller}
\end{align}
The first property follows from
dominated convergence and continuity of the trajectories (with respect to the norm topology as shown in Theorem~\ref{thm:existence} and thus also with respect to $\tau_{\textnormal{w}^*}$). Concerning the second property 
we follow the arguments of the proof of \cite[Proposition 3.4]{CFMT:11} adapted to the current setting.
Indeed, we first note that as $\|P_t f\|_{L^{\infty}(K)} \leq \|f\|_{L^{\infty}(K)}$, it suffices to verify~\eqref{eq:Feller} for $f$ in a dense subset of $C_0(K_0)$. By a locally compact version of the Stone-Weierstrass theorem (see,
e.g., \cite[Corollary 8.3]{Conway:1990}), the linear span of the set 
\[
A(K_0)
:= 
\{\exp(-\llangle  (\cdot, \cdot) , (V,w) \rrangle)  \, | \, V \in S_c^{++}(H),\, w \in \mathbb{R}_{++} \},
\]
is dense in $C_0(K_0)$. Recall that $S_c^{++}(H)$ denotes strictly 
positive definite compact operators and $\llangle \cdot , \cdot \rrangle$ is defined by~\eqref{eq:def_alternative_pairing}. Indeed, 
$(x,z) \mapsto \exp(-\llangle (V,w), (x,z) \rrangle)$ is $\tau_{\textnormal{w}^*}$-continuous due to the continuity of the pairing and the exponential function.  Moreover, it vanishes at $\Delta$ as $V \in S_c^{++}(H)$ and $w \in \mathbb{R}_{++}$. Hence   $A(K_0)$
is a subalgebra of $C_0 (K_0)$.
Moreover, $A(K_0)$ clearly 
separates points and $f>0$ for all $f\in A(K_0)$.\par 
From Corollary ~\ref{cor:laplace_transform_simple}, we know the form of 
$
P_t\exp(-\llangle (\cdot,\cdot), (V,w) \rrangle),
$
namely for all $(x,z)\in K_0$ we have
\begin{align*}
&P_t\exp(-\llangle  (\cdot,\cdot), (V,w) \rrangle)(x,z)
 = 
 \e^{-zw} \E_{x} \left[
    \exp(-\tr((V + w \id_H) X_t )
 \right]
\\
& \quad =
 \e^{-zw} \det(\id_H + 2 \sqrt{V+wI_H} Q_t \sqrt{V+wI_H})^{-\frac{\alpha}{2}}\\
& \quad \quad \times \exp\left( - \tr\left( 
 \e^{tA} \sqrt{V+wI_H} (\id_H + 2 \sqrt{V+wI_H}Q_t\sqrt{V+wI_H})^{-1}\sqrt{V+w I_H} \e^{tA^*}
  x \right)\right),
\end{align*}
where $Q_t$ is defined by~\eqref{eq:Qtdef}.\par 
Note now that for all $V \in S_c^{++}(H)$ and $w \in \mathbb{R}_{++}$, we have
\[
\e^{tA} \sqrt{V+wI_H} (\id_H + 2 \sqrt{V+wI_H}Q_t\sqrt{V+wI_H})^{-1}\sqrt{V+w I_H} \e^{tA^*} \in S^{++}(H).
\]
This follows from the fact that
 $V  \in S_c^{++}(H)$,  $\e^{tA}$  is injective for every $t >0$ by assumption and  that $g x g^* \in S^{++}(H)$ for $x \in  S^{++}(H)$  and injective operators $g \in L(H)$. Hence, 
 $$P_t\exp(-\llangle (\cdot,\cdot),(V,w) \rrangle) (x,z)\to 0$$ 
 as  $(x,z) \to \Delta$ implying that $P_t\exp(-\llangle  (\cdot,\cdot) ,(V,w)\rrangle) \in C_0(K_0)$.

\end{proof}

By taking $K_0$ in Theorem~\ref{thm:feller} to be the set
\begin{align}\label{eq:Kn}
K_n:= \{ (A,x)\in S^+_1(H)\times  \R_+
\colon \operatorname{rank}(A) \leq n \}, 
\end{align}
we immediately obtain the following corollary:

\begin{corollary}
Assume the setting of Theorem~\ref{thm:existence}, in particular, let $X$ satisfy~\ref{eq:solX}. Assume that $\e^{tA}$ is injective for all $t\geq 0$, 
and let $\tau_{\textnormal{w}^*}^n$ be the $\tau_{\textnormal{w}^*}$-relative topology on $K_n$ defined in \eqref{eq:Kn}. Then $X$ is Feller with respect to $C_0((K_n,\tau_{\textnormal{w}^*}^n),\R)$.
\end{corollary}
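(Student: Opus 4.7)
The plan is to apply Theorem~\ref{thm:feller} with $K_0 := K_n$. Two ingredients are required: the $\tau_{\textnormal{w}^*}$-closedness of $K_n$ in $K$, and invariance of $K_n$ under the Wishart dynamics starting from any $(x, z) \in K_n$.

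The invariance is immediate from Theorem~\ref{thm:existence}: if $(x, z) \in K_n$ then $\operatorname{rank}(x) \leq n$, so Theorem~\ref{thm:existence} supplies a cylindrical Brownian motion $W$ and a continuous adapted $S_1^+(H)$-valued process $X$ with $X_0 = x$ satisfying~\eqref{eq:solX} and having $\operatorname{rank}(X_t) \leq n$ almost surely for every $t\geq 0$. Since the second coordinate is held constant at $z$, the pair $(X_t, z)$ remains in $K_n$ almost surely.

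The substantive step is the $\tau_{\textnormal{w}^*}$-closedness of $K_n$; this is the only place requiring care, since rank is only lower semi-continuous in general and eigenvalues of compact positive operators need not be continuous under weak-$*$ limits. The key observation is that for any orthonormal system $v_1,\ldots,v_{n+1}$ in $H$, each rank-one operator $v_j \otimes v_i$ lies in $K(H)$, so $\tau_{\textnormal{w}^*}$-convergence $(B_\alpha, y_\alpha)\to (B, y)$ in $K$ entails $\langle B_\alpha v_i, v_j \rangle \to \langle B v_i, v_j \rangle$; consequently the map $(B, y) \mapsto \det\bigl((\langle B v_i, v_j \rangle)_{i,j=1}^{n+1}\bigr)$ is $\tau_{\textnormal{w}^*}$-continuous. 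For every $B \in S_1^+(H)$ with $\operatorname{rank}(B) \leq n$ this determinant vanishes, since $P_V B P_V$ has rank at most $n$ when $V$ is the $(n+1)$-dimensional span of $v_1,\ldots,v_{n+1}$. Conversely, given $A \in S_1^+(H)$ with $\operatorname{rank}(A) > n$, one chooses $v_1,\ldots,v_{n+1}$ as orthonormal eigenvectors of $A$ corresponding to $n+1$ non-zero eigenvalues $\lambda_1,\ldots,\lambda_{n+1}$, so that the determinant equals $\prod_{i=1}^{n+1}\lambda_i > 0$. This exhibits the complement of $K_n$ in $K$ as an open set, proving closedness; together with the invariance established above, Theorem~\ref{thm:feller} then delivers the Feller property of $X$ on $C_0((K_n, \tau_{\textnormal{w}^*}^n), \R)$.
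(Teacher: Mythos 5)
Your proof is correct and takes the same approach the paper intends: the paper derives the corollary by taking $K_0 = K_n$ in Theorem~\ref{thm:feller}, calling this ``immediate.'' You supply the two hypotheses explicitly — invariance of $K_n$ via Theorem~\ref{thm:existence}, and $\tau_{\textnormal{w}^*}$-closedness of $K_n$ via continuity of $(B,y)\mapsto \det\bigl((\langle B v_i, v_j\rangle)_{i,j=1}^{n+1}\bigr)$ — the latter being a genuine and correctly handled subtlety that the paper leaves unspoken.
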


\begin{remark}
 The above corollary with $K_n$ as state space applies of course also to the setting when  additionally to the injectivity of $\e^{tA}$ for all $t \geq 0$, $Q$ is required to be injective. Indeed, by Remark~\ref{injectiverankn} and with $\alpha=n$, we can in this case even choose the smaller state space
 \[
\widetilde{K}_n:= \{ (A,x)\in S^+_1(H)\times  \R_+
\colon \operatorname{rank}(A) = n \}, 
\] 
if $\operatorname{rank}(x_0)=n$.
\end{remark}

\appendix
\section{Some simple lemmas}

\begin{lemma}\label{lem:simple_inverse}
Let $A,B\in S^+(\R^n)$. Then $\id_{\R^n}+BA$ and $\id_{\R_n}+\sqrt{A}B\sqrt{A}$ are invertible,
\begin{align}\label{eq:simple_det}
\det(\id_{\R^n}+BA) = \det(\id_{\R_n}+\sqrt{A}B\sqrt{A}),
\end{align}
and 
\begin{align}\label{eq:simple_inverse}
A(\id_{\R^n}+BA)^{-1} &= \sqrt{A}(\id_{\R_n}+\sqrt{A}B\sqrt{A})^{-1}\sqrt{A}.
\end{align}
\end{lemma}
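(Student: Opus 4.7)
The plan is to first establish invertibility of the ``symmetric'' factor $\mathrm{Id}_{\mathbb{R}^n} + \sqrt{A}B\sqrt{A}$, then transfer this to $\mathrm{Id}_{\mathbb{R}^n} + BA$ via Sylvester's determinant identity, and finally verify the operator identity~\eqref{eq:simple_inverse} by direct algebraic manipulation using a key intertwining relation.

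For the first step, observe that $\sqrt{A}B\sqrt{A} = (\sqrt{A}\sqrt{B})(\sqrt{A}\sqrt{B})^T \in S^+(\mathbb{R}^n)$, so all its eigenvalues lie in $[0,\infty)$. Hence $\mathrm{Id}_{\mathbb{R}^n} + \sqrt{A}B\sqrt{A}$ has eigenvalues in $[1,\infty)$, so it is invertible and $\det(\mathrm{Id}_{\mathbb{R}^n} + \sqrt{A}B\sqrt{A}) \geq 1$. For the second step, I would apply Sylvester's determinant theorem (i.e., $\det(\mathrm{Id} + XY) = \det(\mathrm{Id} + YX)$ for matrices of compatible sizes) with $X = B\sqrt{A}$ and $Y = \sqrt{A}$ to obtain
\[
 \det(\mathrm{Id}_{\mathbb{R}^n} + BA) = \det(\mathrm{Id}_{\mathbb{R}^n} + B\sqrt{A}\cdot \sqrt{A}) = \det(\mathrm{Id}_{\mathbb{R}^n} + \sqrt{A}B\sqrt{A}),
\]
which proves~\eqref{eq:simple_det}; combined with the first step, this shows $\det(\mathrm{Id}_{\mathbb{R}^n} + BA) \geq 1$, so $\mathrm{Id}_{\mathbb{R}^n} + BA$ is invertible as well.

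For~\eqref{eq:simple_inverse}, the key observation is the intertwining relation
\[
 \sqrt{A}\cdot BA = \sqrt{A} B \sqrt{A}\cdot \sqrt{A},
\]
which allows one to commute $\sqrt{A}$ past $B$ at the cost of turning $A$ into $\sqrt{A}B\sqrt{A}$ on the other side. Using this, I would multiply the right-hand side of~\eqref{eq:simple_inverse} on the right by $(\mathrm{Id}_{\mathbb{R}^n} + BA)$ and compute
\[
 \sqrt{A}(\mathrm{Id} + \sqrt{A}B\sqrt{A})^{-1}\sqrt{A}(\mathrm{Id} + BA)
 = \sqrt{A}(\mathrm{Id} + \sqrt{A}B\sqrt{A})^{-1}(\sqrt{A} + \sqrt{A}B\sqrt{A}\cdot \sqrt{A})
 = \sqrt{A}\cdot \sqrt{A} = A,
\]
which equals the left-hand side times $(\mathrm{Id} + BA)$, so multiplying back by $(\mathrm{Id}+BA)^{-1}$ yields~\eqref{eq:simple_inverse}.

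There is no serious obstacle in this lemma; the only subtlety is remembering that Sylvester's identity applies regardless of whether $A$ is invertible (so one need not separately treat the degenerate case $\det(A)=0$), and that the verification of~\eqref{eq:simple_inverse} must be done by right-multiplication rather than by manipulating inverses directly, since when $A$ is singular one cannot simply ``cancel'' $\sqrt{A}$'s.
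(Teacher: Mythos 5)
Your proof is correct, and it takes a genuinely different route from the paper's. The paper tries to exhibit $\id_{\R^n}+BA$ as similar to $\id_{\R^n}+\sqrt{A}B\sqrt{A}$ via a modified operator $\tilde{A}$ (obtained by replacing the zero eigenvalues of $A$ with $1$), claiming
\[
\id_{\R^n}+BA=\tilde{A}^{-\frac12}\bigl(\id_{\R^n}+\sqrt{A}B\sqrt{A}\bigr)\tilde{A}^{\frac12},
\]
and then reads off~\eqref{eq:simple_det} and the form of $(\id_{\R^n}+BA)^{-1}$ from this similarity, using a null-space observation to pass to~\eqref{eq:simple_inverse}. However, since $\tilde{A}^{-1/2}\sqrt{A}=P$ (the orthogonal projection onto $\operatorname{range}(A)$) and $\sqrt{A}\,\tilde{A}^{1/2}=A$, the right-hand side actually equals $\id_{\R^n}+PBA$, which need not coincide with $\id_{\R^n}+BA$ when $A$ is singular (take $A=\operatorname{diag}(1,0)$ and $B$ the all-ones $2\times 2$ matrix to see that the two sides differ). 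Your argument sidesteps this entirely: Sylvester's identity $\det(\id+XY)=\det(\id+YX)$ gives~\eqref{eq:simple_det} directly without any similarity, and your right-multiplication check — that both sides of~\eqref{eq:simple_inverse}, multiplied by $(\id+BA)$ on the right, collapse to $A$ via $\sqrt{A}BA=\sqrt{A}B\sqrt{A}\cdot\sqrt{A}$ — correctly handles the singular case where one cannot cancel factors of $\sqrt{A}$. Your version is cleaner and, in fact, more reliable than the one in the text.
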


\begin{proof}
Note that $\sqrt{A}B\sqrt{A}\in S^+(\R^n)$, so $\id_{\R_n}+\sqrt{A}B\sqrt{A}$ is clearly invertible. Next, let $(h_i)_{i=1}^{n}$ be an orthonormal basis of eigenvectors for $A$ such that the corresponding eigenvalues $(a_i)_{i=1}^{n}$ form a decreasing sequence; set $m=\max(\{i\colon a_i\neq 0\}\cup\{0\})$ and define $\tilde{A} = \sum_{i=1}^m a_i h_i \otimes h_i + \sum_{i=m+1}^{n} h_i \otimes h_i$. We have 
\begin{align*}
\id_{\R^n} +BA & 
= \tilde{A}^{-\frac{1}{2}}(\id_{\R^n} + \sqrt{A} B \sqrt{A} ) \tilde{A}^{\frac{1}{2}}
\end{align*}
from which we conclude that~\eqref{eq:simple_det} holds and that $\id_{\R^n}+BA$
is invertible with inverse
\begin{align*}
(\id_{\R_n} + BA)^{-1} & 
= \tilde{A}^{-\frac{1}{2}}(\id_{\R_n} + \sqrt{A} B \sqrt{A} )^{-1} \tilde{A}^{\frac{1}{2}}.
\end{align*}
Thus $x\in N(A)$ implies $(\id_{\R_n} + BA)^{-1}x\in N(A)$, and we conclude that~\eqref{eq:simple_inverse} holds.
\end{proof}


\begin{lemma}\label{lem:invconv_sot}
Let $H$ be a (real or complex) Hilbert space and $A, A_n\in L(H)$ $(n\in \N)$. Assume that $A_n \stackrel{\text{s.o.t.}}{\rightarrow} A$ (`s.o.t' stands for `strong operator topology'), that
$A$ and $A_n$ are invertible ($n\in \N$), and that $\sup_{n\in \N} \| A_n^{-1}\|_{L(H)} < \infty$.
Then $ A_n^{-1}\stackrel{\text{s.o.t.}}{\rightarrow} A^{-1}$.
\end{lemma}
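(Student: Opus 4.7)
The plan is to exploit the standard resolvent-type identity
\begin{equation*}
 A_n^{-1} - A^{-1} = A_n^{-1}(A - A_n) A^{-1},
\end{equation*}
which is valid for any invertible $A, A_n \in L(H)$ and is verified by multiplying both sides on the left by $A_n$ and on the right by $A$.

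Given this identity, fix an arbitrary $h\in H$ and set $g := A^{-1} h \in H$. Applying the identity to $h$ yields
\begin{equation*}
 (A_n^{-1} - A^{-1}) h = A_n^{-1} (A - A_n) g.
\end{equation*}
The strong operator convergence hypothesis $A_n \stackrel{\text{s.o.t.}}{\to} A$ means that $(A_n - A) g \to 0$ in the norm of $H$. Combined with the uniform bound $C := \sup_{n\in\N} \| A_n^{-1} \|_{L(H)} < \infty$, this gives
\begin{equation*}
 \| (A_n^{-1} - A^{-1}) h \|_H
 \leq \| A_n^{-1} \|_{L(H)} \, \| (A - A_n) g \|_H
 \leq C \, \| (A - A_n) g \|_H \longrightarrow 0,
\end{equation*}
as $n\to\infty$. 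Since $h\in H$ was arbitrary, we conclude $A_n^{-1} \stackrel{\text{s.o.t.}}{\to} A^{-1}$.

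There is no real obstacle here: the lemma is essentially a one-line consequence of the resolvent identity together with the assumed uniform bound on $\| A_n^{-1} \|_{L(H)}$. The uniform bound is what makes the argument work; without it, the single factor $(A - A_n) g \to 0$ cannot be amplified through an unbounded sequence of operators, which is why this assumption is explicitly included in the statement.
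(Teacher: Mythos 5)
Your proof is correct and is essentially identical to the paper's: both factor $A_n^{-1}-A^{-1}=A_n^{-1}(A-A_n)A^{-1}$, apply it to a fixed vector, and combine $\sup_n\|A_n^{-1}\|<\infty$ with strong operator convergence. No differences worth noting.
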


\begin{proof} 
For all $h\in H$ we have 
\begin{align*}
    \| (A_n^{-1} - A^{-1})h \|_H 
    &
    = \| A_n^{-1}(\id_H - A_n A^{-1})h \|_H 
    \leq \sup_{k\in \N}\| A_k^{-1}\|_H 
    \| (A - A_n) A^{-1}) h \|_H \rightarrow 0.
\end{align*}
\end{proof}

\begin{lemma}\label{lem:sot_traceconvergence}
Let $H$ be a separable (real or complex) Hilbert space, let $A\in L_1(H)$, and let $B, B_1,B_2,\ldots,C, C_1,C_2,\ldots\in L(H)$ such that 
$\sup_{n\in \N} \| B_n \|_{L(H)}  < \infty$, $B_n \stackrel{\text{s.o.t.}}{\rightarrow} B$, and $C_n \stackrel{\text{s.o.t.}}{\rightarrow} C$. Then
\begin{enumerate}
\item $B_n C_n \stackrel{\text{s.o.t.}}{\rightarrow } BC$,
\item $\tr(B_n A) \rightarrow \tr(BA)$.
\end{enumerate}
\end{lemma}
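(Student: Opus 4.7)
The plan is to handle the two assertions separately; both are essentially routine, with part~(ii) only requiring a mild use of dominated convergence made possible by the singular value decomposition of $A$.

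For part~(i), I would fix $h\in H$ and insert the term $B_n C h$ into a triangle inequality:
\begin{equation*}
 \| B_n C_n h - B C h \|_H
 \leq \| B_n \|_{L(H)} \| (C_n - C) h \|_H + \| (B_n - B) C h \|_H.
\end{equation*}
The first summand tends to $0$ because $\sup_{n}\|B_n\|_{L(H)}<\infty$ by assumption and $C_n h \to Ch$ by the strong operator convergence of $C_n$ to $C$; the second summand tends to $0$ by applying the strong operator convergence of $B_n$ to $B$ to the fixed vector $Ch$.

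For part~(ii), I would invoke the singular value decomposition of $A\in L_1(H)$ (Theorem~\ref{thm:singular_value_decomposition}) to write $A = \sum_{k\in \N} s_k\, e_k \otimes f_k$ with $(s_k)_{k\in \N}\in \ell^1$ and orthonormal systems $(e_k)_{k\in \N}$, $(f_k)_{k\in \N}$. Computing the trace against an orthonormal basis and interchanging the (absolutely convergent) sums yields
\begin{equation*}
 \tr(B_n A) - \tr(BA) = \sum_{k\in \N} s_k \, \langle (B_n - B) f_k, e_k \rangle_H.
\end{equation*}
Each summand converges to $0$ as $n\to \infty$ since $B_n f_k \to B f_k$ in $H$, and each summand is dominated in absolute value by $(M + \| B \|_{L(H)})\, s_k$, where $M:=\sup_{n\in \N}\| B_n \|_{L(H)}$. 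Since $(s_k)_{k\in \N}\in \ell^1$, the dominated convergence theorem for series gives $\tr(B_n A)\to \tr(BA)$.

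I do not anticipate any real obstacle: the uniform bound $\sup_n \| B_n \|_{L(H)} <\infty$ (which in fact follows from strong operator convergence via the uniform boundedness principle, but is stated as a hypothesis here) is exactly what is needed both to dominate the first term in part~(i) and to produce the summable majorant in part~(ii).
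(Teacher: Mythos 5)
Your proof is correct and follows essentially the same route as the paper: part (i) by a two-term decomposition of $B_nC_nh - BCh$ controlled by the uniform bound on $\|B_n\|_{L(H)}$ and strong convergence, and part (ii) via the singular value decomposition $A=\sum_k s_k\, e_k\otimes f_k$ together with dominated convergence for series with summable majorant $s_k$. (The only cosmetic difference is that you split $B_nC_n-BC = B_n(C_n-C)+(B_n-B)C$ while the paper uses $(B_n-B)C_n+B(C_n-C)$; your choice is marginally more direct since the uniform bound applies immediately to the first summand.)
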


\begin{proof}
The first assertion follows from the fact that $B_nC_n -BC = (B_n - B)C_n + B(C_n -C)$.
For the second, let $s_1\geq s_2 \geq \ldots \geq 0$ and let $(e_k)_{k\in\N}$, $(f_k)_{k\in \N}$ be orthonormal systems in $H$ such that $A=\sum_{k\in \N} s_k e_k \otimes f_k$ (see Theorem~\ref{thm:singular_value_decomposition}). 
Recall that $\tr(B_n A) = \sum_{k\in \N} \langle B_n A e_k, e_k \rangle_H = \sum_{k\in \N} s_k \langle B_n f_k, e_k \rangle_H$, and note that $\sum_{k\in \N} |s_k \langle B_n f_k, e_k \rangle_H| \leq \| A \|_{L_1(H)} \sup_{n\in \N} \|B_n\|_{L(H)} < \infty$, whence the result follows from the dominated convergence theorem and the fact that $\lim_{n\rightarrow \infty} \langle B_n f_k, e_k \rangle_H = \langle B f_k, e_k\rangle_H$ for all $k\in \N$.
\end{proof}

\begin{lemma}\label{lem:approxPsi}
Let $H$ be a separable real Hilbert space, $A\colon D(A) \subseteq H \rightarrow H$ the generator of a $C_0$-semigroup,  $(h_k)_{k\in \N}$ an orthonormal basis for $H$ satisfying $h_k\in D(A^*)$ for all $k\in \N$
$B\in L(H_{\C})$ such\footnote{The complexification $H_{\C}$ of $H$ is introduced at the beginning of Section~\ref{sec:Riccati}.} that $AB$, $BA^*$, $A B A^* \in L(H_{\C})$, let $\lambda \in \rho(A)$ (the resolvent set of $A$), and let $P_m\in L(H)$ be the orthonormal projection onto $\operatorname{span}(h_1,\ldots,h_m)$, $m\in \N$. For notational brevity we introduce\footnote{We have $D(P_m^{A})=H$ as $h_1,h_2,\ldots \in D(A^*)$.}
\begin{equation}
    P_m^{A}\colon H \rightarrow H,\quad P_m^{A}h=R(\lambda,A) P_m (\lambda - A)h \quad (h\in H),
\end{equation}
where $R(\lambda, A):= (\lambda  - A)^{-1}$ denotes the resolvent of $A$.
Then  
\begin{equation}
\begin{aligned}\label{eq:bound_project}
&
\sup_{m\in \N} \max\left\{ 
    \| 
        P_m^{A} B (P_m^A)^*
    \|_{L(H_\C)},
    \| 
       A P_m^{A} B (P_m^A)^*
    \|_{L(H_\C)},    
    \| 
       P_m^{A} B (P_m^A)^* A^*
    \|_{L(H_\C)}
    \right\}
\\ & \quad
\leq \left(1+ \max\{|\lambda|,1\} \| R(\lambda,A)\|_{L(H)} \right) \| R(\lambda,A) \|_{L(H)}   \| (\lambda - A)  B (\lambda - A^*) \|_{L(H_{\C})}  < \infty,
\end{aligned}
\end{equation}
and we have
$P_m^{A} B (P_m^A)^* \stackrel{\text{s.o.t.}}{\rightarrow} B $,
$A P_m^{A} B (P_m^A)^* \stackrel{\text{s.o.t.}}{\rightarrow}  A B$,
and
$P_m^{A} B (P_m^A)^* A^* \stackrel{\text{s.o.t.}}{\rightarrow}  B A^*$.
If moreover $B\in L_1(H_{\C})$ is such that $(\lambda - A) B (\lambda - A^*)\in L_1(H_\C)$ and $C\in L(H)$, then 
\begin{equation}\label{eq:L1bound_proj}
   \sup_{m\in \N} \| P_m^{A} B (P_m^{A})^* C\|_{L_1(H_{\C})} < \infty, 
\end{equation}
and $\tr(P_m^{A} B (P_m^{A})^* C) \rightarrow \tr(BC) $ as $m\rightarrow \infty$.
\end{lemma}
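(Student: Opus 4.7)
The plan is to exploit the fact that, although $P_m^A$ is initially defined via the composition involving the unbounded operator $(\lambda - A)$, the hypothesis $h_i \in D(A^*)$ allows us to rewrite
\begin{equation*}
P_m^A = \sum_{i=1}^m R(\lambda, A) h_i \otimes (\lambda - A^*) h_i, \qquad (P_m^A)^* = (\lambda - A^*) P_m R(\lambda, A^*),
\end{equation*}
as bounded operators on $H_{\C}$. Introducing $T := (\lambda - A) B (\lambda - A^*)$, we observe that $T \in L(H_{\C})$ because $T = \lambda^2 B - \lambda AB - \lambda BA^* + ABA^*$ is a finite combination of bounded operators by hypothesis. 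Thus the composite admits the clean representation
\begin{equation*}
P_m^A B (P_m^A)^* = R(\lambda, A) P_m T P_m R(\lambda, A^*),
\end{equation*}
and this single identity will be the workhorse of the whole argument.

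With this reformulation in hand, I will derive the bounds~\eqref{eq:bound_project} from $\|P_m\|_{L(H)} \leq 1$ together with the resolvent identities $A R(\lambda, A) = \lambda R(\lambda, A) - I$ on $H$ and $R(\lambda, A^*) A^* = \lambda R(\lambda, A^*) - I$ on $D(A^*)$, which yield the bounded-operator representations
\begin{align*}
A P_m^A B (P_m^A)^* & = (\lambda R(\lambda, A) - I) P_m T P_m R(\lambda, A^*), \\
P_m^A B (P_m^A)^* A^* & = R(\lambda, A) P_m T P_m (\lambda R(\lambda, A^*) - I),
\end{align*}
the second being the bounded extension of the a priori densely defined expression. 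Taking operator norms then gives the stated bound via $\|\lambda R(\lambda, A) - I\|_{L(H)} \leq 1 + |\lambda| \|R(\lambda, A)\|_{L(H)}$.

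For the SOT convergence, I will fix $h \in H_{\C}$, set $k := R(\lambda, A^*) h$, and use that $P_m \to I$ in SOT together with boundedness of $T$ to obtain $P_m T P_m k \to T k$; multiplying by the bounded operator $R(\lambda, A)$ (or by $\lambda R(\lambda, A) - I$, respectively) preserves the convergence. Identifying the limits reduces to collapsing resolvents using $(\lambda - A^*) R(\lambda, A^*) = I$ on $H_{\C}$ and $R(\lambda, A)(\lambda - A) = I$ on $D(A)$, the latter being applicable because $AB \in L(H_{\C})$ forces the range of $B$ to lie in $D(A)$. This yields $P_m^A B (P_m^A)^* h \to B h$ and, analogously, $A P_m^A B (P_m^A)^* h \to AB h$ via $(\lambda R(\lambda, A) - I)(\lambda - A) = A$ on $D(A)$. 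The mildly trickier case is the convergence to $BA^* h$, for which I will first verify on the dense subspace $D(A^*)$ that $R(\lambda, A) T (\lambda R(\lambda, A^*) - I) h$ evaluates to $BA^* h$ (exploiting $(\lambda - A^*)(\lambda R(\lambda, A^*) - I) = A^*$ on $D(A^*)$), then extend the operator identity to all of $H_{\C}$ by continuity.

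Finally, for the trace-class statement, the bound~\eqref{eq:L1bound_proj} follows immediately from~\eqref{eq:ideal} applied to the factorisation $(R(\lambda, A) P_m) \, T \, (P_m R(\lambda, A^*) C)$. For the trace convergence, I will use the cyclic property of the trace to rewrite
\begin{equation*}
\tr\bigl(P_m^A B (P_m^A)^* C\bigr) = \tr\bigl(T \cdot P_m F P_m\bigr), \qquad F := R(\lambda, A^*) C R(\lambda, A) \in L(H_{\C}),
\end{equation*}
and since $P_m F P_m \to F$ in SOT with uniform operator-norm bound and $T \in L_1(H_{\C})$, Lemma~\ref{lem:sot_traceconvergence} will give $\tr(T P_m F P_m) \to \tr(T F) = \tr(BC)$, the last equality being one more collapse of resolvents. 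The main obstacle throughout is not mathematical but purely organisational: every resolvent identity such as $R(\lambda, A)(\lambda - A) = I$ only holds on $D(A)$, so each application must be carefully matched against the domain of the operator it acts on and extended by continuity where needed. The conjugation by $R(\lambda, A)$ in the definition of $P_m^A$ is, however, engineered precisely to absorb these domain issues, so the bookkeeping resolves itself cleanly once the reformulation in the first paragraph is in place.
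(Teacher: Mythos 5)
Your proposal is correct and follows essentially the same route as the paper's (very terse) proof: the central move in both is the factorisation $P_m^A B (P_m^A)^* = R(\lambda,A)\,P_m\,(\lambda-A)B(\lambda-A^*)\,P_m\,R(\lambda,A^*)$, with the operator norm and trace-class bounds read off via the ideal property, the strong-operator-topology limits obtained from $P_m \stackrel{\text{s.o.t.}}{\rightarrow} \id_H$ together with Lemma~\ref{lem:sot_traceconvergence}, and the trace convergence obtained by cycling and applying Lemma~\ref{lem:sot_traceconvergence} once more. The paper simply asserts that the bound is ``immediate from the definition'' and compresses the SOT argument into a single sentence; you have filled in the resolvent-identity bookkeeping (including the slightly delicate extension from $D(A^*)$ for the $BA^*$ limit) that the paper leaves implicit.
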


\begin{proof}
Estimate~\eqref{eq:bound_project} is immediate from the definition of $P_m^{A}$. It follows from the fact that $P_m \stackrel{\text{s.o.t.}}{\rightarrow} \id_H$ and that $(\lambda - A) B(\lambda - A^*), AR(\lambda,A), R(\lambda,A^*)A^*\in L(H_{\C})$ and repeated application of~Lemma~\ref{lem:sot_traceconvergence}
that $P_m^A B (P_m^{A})^* \stackrel{\text{s.o.t.}}{\rightarrow} B$, $A P_m^A B (P_m^{A})^*  \stackrel{\text{s.o.t.}}{\rightarrow} AB$, and $P_m^A B (P_m^{A})^* A^* \stackrel{\text{s.o.t.}}{\rightarrow} BA^*$.
Finally, if $B\in L_1(H_{\C})$ is such that $(\lambda - A )B(\lambda-A^*)\in L_1(H_{\C})$ then
\begin{align*}
 \| P_m^A B (P_m^A)^* C\|_{L_1(H_\C)} 
    & \leq \|R(\lambda,A)\|_{L(H_{\C})}^2 \| C \|_{L(H_\C)} \|(\lambda - A )B(\lambda-A^*)\|_{L_1(H_\C)} .
\end{align*} 
Moreover, 
\begin{align*}
   \tr(P_m^A B (P_m^A)^* C) = \tr( P_m R(\lambda,A^*)C R(\lambda,A) P_m (\lambda -A) B (\lambda-A^*)). 
\end{align*}
As $P_m R(\lambda,A^*)C R(\lambda,A) P_m \stackrel{\text{s.o.t.}}{\rightarrow} R(\lambda,A^*)C R(\lambda,A)$ and $(\lambda - A )B(\lambda-A^*)\in L_1(H_{C})$ it now follows from Lemma~\ref{lem:sot_traceconvergence} that $\tr(P_m^A B (P_m^A)^* C)\rightarrow \tr(B C)$. 
\end{proof}

\section{Laplace transform of the squared Ornstein--Uhlenbeck process}\label{app:Laplace-tarnsform}

Recall from the proof of Theorem~\ref{thm:existence} that 
the solution to~\eqref{eq:wishart1} is obtained by taking $X=Y^*Y$, where $Y$ is a Ornstein-Uhlenbeck process. In this setting the Laplace transform of $X$ can be obtained directly (i.e., one does not need the Riccati equations as in Proposition~\ref{thm:laplace_transform}), cf. 
Proposition~\ref{thm:laplace_transform_OU}. Of course, as we assume we already know that $X=Y^*Y$, we could not use this to deduce the uniqueness in law of solutions to~\eqref{eq:wishart1} as we did in Corollary~\ref{cor:wishart_uniqueinlaw}.

In order to prove Proposition~\ref{thm:laplace_transform_OU} we need the following well-known lemma, for lack of a suitable reference we provide a proof for the readers' convenience.

\begin{lemma}\label{lemma:expexpnorm}
Let $H$ be a Hilbert space, $\mu\in H$, and let $X$ be 
an $H$-valued centered Gaussian random variable with covariance operator 
\begin{equation*} 
 Q =\sum_{k=1}^{\infty} q_k h_k \otimes h_k,
\end{equation*} 
where $(q_k)_{k\in \N}$ is a non-negative sequence in $\ell_1$ and $(h_k)_{k\in\N}$ is an orthonormal basis of $H$. 
Then $\id_H + 2\alpha Q \in L(H)$ is invertible
for all $\alpha>-(2\max_{k\in \N} q_k)^{-1} = -\frac{1}{2}\| Q \|_{L(H)}^{-1}$,
\begin{align}\label{eq:def_det}
\det(\id_H + 2 \alpha Q) &:= \prod_{k=1}^{\infty}(1+2\alpha q_k) \in (0,\infty),
\\ \text{and} \quad \label{eq:expexpnorm}
 \E\left[ \e^{- \alpha\| X + \mu \|^2_H } \right]
 & =
  \det(\id_H + 2 \alpha Q)^{-\frac{1}{2}} \exp(-\langle (\id_H + 2\alpha Q )^{-1}\mu,\mu\rangle_H).
\end{align}
\end{lemma}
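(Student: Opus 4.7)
The plan is to diagonalize $Q$ in its eigenbasis $(h_k)_{k\in\N}$ and reduce the identity to a product of independent one-dimensional Gaussian integrals.

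First, I would establish the operator-theoretic claims. For $\alpha > -(2\|Q\|_{L(H)})^{-1} = -(2\sup_k q_k)^{-1}$, each eigenvalue $1+2\alpha q_k$ of $\id_H+2\alpha Q$ is bounded below by a positive constant independent of $k$ (by $1$ when $\alpha\geq 0$, and by $1+2\alpha\|Q\|_{L(H)}>0$ when $\alpha<0$). Hence $\id_H+2\alpha Q=\sum_k (1+2\alpha q_k)\,h_k\otimes h_k$ is invertible, with bounded diagonal inverse $\sum_k(1+2\alpha q_k)^{-1}h_k\otimes h_k$. Since $(q_k)\in\ell_1$ and $|\log(1+x)|\leq C_\delta |x|$ uniformly on $[-1+\delta,\infty)$, the series $\sum_k \log(1+2\alpha q_k)$ converges absolutely, so $\prod_k (1+2\alpha q_k)$ converges to a positive finite value, giving \eqref{eq:def_det}.

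Next, by the Karhunen--Lo\`eve expansion there exist i.i.d.~standard Gaussians $(\xi_k)_{k\in\N}$ such that $X=\sum_k\sqrt{q_k}\,\xi_k\, h_k$ converges $\P$-a.s.\ in $H$. With $\mu_k:=\langle\mu,h_k\rangle_H$, Parseval gives $\|X+\mu\|_H^2=\sum_k(\sqrt{q_k}\xi_k+\mu_k)^2$. Independence of the summands, combined with monotone convergence (for $\alpha\geq 0$, where each factor lies in $(0,1]$) or dominated convergence on the partial products (for $\alpha<0$, using the uniform lower bound on the $1+2\alpha q_k$ derived above), yields
\[
\E\bigl[e^{-\alpha\|X+\mu\|_H^2}\bigr]=\prod_{k=1}^\infty \E\bigl[e^{-\alpha(\sqrt{q_k}\xi_k+\mu_k)^2}\bigr].
\]

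Each one-dimensional factor is evaluated by completing the square inside the Gaussian integral, yielding $(1+2\alpha q_k)^{-1/2}\exp\!\bigl(-\alpha\mu_k^2/(1+2\alpha q_k)\bigr)$. Multiplying these, one identifies $\prod_k(1+2\alpha q_k)^{-1/2}=\det(\id_H+2\alpha Q)^{-1/2}$ from \eqref{eq:def_det}, and uses the diagonal action of $(\id_H+2\alpha Q)^{-1}$ on $(h_k)$ together with Parseval to recognise $\sum_k \mu_k^2/(1+2\alpha q_k)=\langle(\id_H+2\alpha Q)^{-1}\mu,\mu\rangle_H$, giving the formula \eqref{eq:expexpnorm}. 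The only delicate point is justifying the interchange of expectation with the infinite product when $\alpha<0$: individual factors may exceed $1$, but the uniform positive lower bound on the $1+2\alpha q_k$ together with the absolute convergence of $\sum_k\log(1+2\alpha q_k)$ supports a standard truncation plus dominated-convergence argument on the partial products. Everything else is routine scalar calculus.
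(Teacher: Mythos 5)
Your proposal follows essentially the same strategy as the paper's proof: diagonalize $Q$ in its eigenbasis, reduce to independent one-dimensional Gaussian integrals (the paper does this through finite-rank projections $P_n = \sum_{k\le n} h_k\otimes h_k$ and an $n$-dimensional Gaussian computation, you do it via a Karhunen--Lo\`eve coordinatization; these are the same computation in slightly different clothing), complete the square in each coordinate, and pass to the limit.

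One small slip worth flagging: you have the two convergence theorems swapped relative to where they work cleanly. For $\alpha\geq 0$ the partial products $\exp(-\alpha\sum_{k\le n}(\sqrt{q_k}\xi_k+\mu_k)^2)$ are \emph{decreasing} in $n$ and bounded above by $1$, so this is the dominated-convergence case, not the monotone one. For $\alpha<0$ the partial products are \emph{increasing}, so the monotone convergence theorem gives the interchange of expectation and limit directly, with no need for the truncation-plus-domination argument you describe (which, as written, is the delicate step you rightly sensed, since an integrable dominating function for the increasing sequence is exactly the quantity whose finiteness is being established). With that correction the argument goes through and matches the paper's.
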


\begin{proof}
As $(q_k)_{k\in \N}$ is a non-negative sequence and $\min_{k\in \N} 2\alpha q_k>-1$, clearly $\id_H + 2\alpha Q$ is invertible, and as $\log(1+x) \leq x$ for all $x\in (-1,\infty)$, 
we have, for $\alpha \geq 0$ and $m,n\in \N$,
\begin{align*}
0\leq \log\left(\prod_{k=m}^{n} (1+2 \alpha q_k)\right) 
\leq 2\alpha \sum_{k=m}^{n} q_k.
\end{align*} 
Similarly, for $\alpha \leq 0$ and $m,n\in \N$ we have 
\begin{align*}
0\leq -\log\left(\prod_{k=m}^{n} (1+2 \alpha q_k)\right)
&= \sum_{k=m}^{n} \log\left( 1 + \tfrac{2|\alpha| q_k }{1+2\alpha q_k} \right) 
\leq 2|\alpha |\sum_{k=m}^{n} \frac{q_k}{1+2\alpha q_k} 
\\
& \leq 2 |\alpha| (1+2\alpha \max_{k\in \N}q_k)^{-1} \sum_{k=m}^{n} q_k.
\end{align*}
As $(q_k)_{k\in \N}\in \ell_1$, we see that $\lim_{n\rightarrow \infty} \prod_{k=1}^{n} (1 + 2\alpha q_k)$ exists (and is finite).\par 
Let $P_n = \sum_{k=1}^{n} h_k \otimes h_k$ ($n\in \N$), i.e., $P_n\in L(H)$ is the orthogonal projection onto $\operatorname{span}\{h_1,\ldots,h_n\}$. By the dominated convergence theorem (for $\alpha \geq 0$) or the monotone convergence theorem (for $\alpha<0$) it suffices to prove that 
\begin{equation}\label{eq:expectation_projected}
 \E\left[ \e^{- \alpha \| P_n (X + \mu) \|^2_H } \right]
 =
 \det(\id_H+2\alpha P_n Q)^{-\frac{1}{2}}
 \exp(-\langle (\id_H + 2\alpha Q )^{-1} P_n \mu,P_n\mu\rangle_H)
\end{equation}
for all $n\in \N$ (note that $\det(\id_H + 2 \alpha P_n Q)=\prod_{k=1}^{n} (1 + 2\alpha q_k)$). \par 
Now fix $n\in \N$ and let $(e_k)_{k=1}^{n}$ denote an orthonormal basis for $\R^n$. Define $Q_n \in \R^{n\times n}$ by $Q_n = \sum_{k=1}^{n} q_k e_k \otimes e_k$, and $\mu_n \in \R^n$ by $\langle \mu_n, e_k \rangle_{\R^n} = \langle \mu , h_k \rangle_H$ for all $k=1,\ldots,n$. Let $Z\sim \mathcal{N}(0,\id_{\R^n})$ be an $n$-dimensional vector of i.i.d.\ standard Gaussians, and observe that $\| \sqrt{Q}_n Z + \mu_n\|_{\R^n}$ and $\| P_n(X+\mu) \|_H$ are equal in distribution. \par 
To ease the notation, we introduce $\hat{Q}_n=\id_{\R^n}+2\alpha Q_n$. Note that $\hat{Q}_n$ is positive definite and commutes with $Q_n$. By first completing the square and then performing a coordinate transform we obtain:
\begin{align*}
& \E \left[ \exp\left( 
- \alpha \| Q_n^{\nicefrac{1}{2}} Z + \mu_n \|_{\R^n}^2 \right)\right] 
\\ & = \frac{1}{(2\pi)^{\nicefrac{n}{2}}} 
\exp(-\alpha \langle (\id_{\R^n} - 2 \alpha  \hat{Q}_n^{-1} Q_n) \mu_n, \mu_n\rangle_{\R^n}) 
\\
& \qquad \times 
\int_{\R^n}
\exp\left(
 -\tfrac{1}{2}
 \| \hat{Q}_n^{\nicefrac{1}{2}} z + 2 \alpha \hat{Q}_n^{-\nicefrac{1}{2}} Q_n^{\nicefrac{1}{2}}\mu_n \|_{\R^n}^2
\right)
\,dz
\\ & = 
\frac{1}{(2\pi)^{\nicefrac{n}{2}} \operatorname{det}(\hat{Q}_n^{\nicefrac{1}{2}})} 
\exp(-\langle ( \id_{\R^n} + 2\alpha  Q_n)^{-1} \mu_n, \mu_n\rangle_{\R^n})
\int_{\R^n}
\exp\left(
 -\tfrac{1}{2}
 \| y \|_{\R^n}^2
\right)
\,dy
\\ & = 
\operatorname{det}(\id_{\R^n} + 2 \alpha Q_n)^{-\nicefrac{1}{2}}
\exp(-\langle ( \id_{\R^n} + 2 \alpha Q_n)^{-1} \mu_n, \mu_n\rangle_{\R^n})
.
\end{align*}
Identity~\eqref{eq:expectation_projected} now follows 
by noting that $\det(\id_{\R^n} + 2 \alpha Q_n)=$ $\det(\id_H+2 \alpha  P_n Q)$ and 
\begin{equation*}
\langle ( \id_{\R^n} + 2\alpha  Q_n)^{-1} \mu_n, \mu_n\rangle_{\R^n} = \langle ( \id_{H} + 2 Q)^{-1} P_n \mu, P_n \mu \rangle_{H}. 
\end{equation*}
\end{proof}

\noindent{\it Proof of Proposition \ref{thm:laplace_transform_OU}.}
First of all note that $\sqrt{|u|}Q_t \sqrt{|u|}\in S^+_1(H)$, indeed, self-adjointness is obvious and moreover we have $\|\sqrt{|u|}Q_t \sqrt{|u|}\|_{L_1(H)} \leq \| \sqrt{|u|} \|_{L(H)}^2 \| Q_t \|_{L_1(H)}<\infty$ as $Q_t\in L_1(H)$ due to assumption~\eqref{eq:ass_integrability} in Theorem~\ref{thm:existence} and~\eqref{eq:SchattenHolder}. Furthermore, by the functional calculus for self-adjoint operators (see, e.g.,~\cite[Theorem VII.1.4]{Werner:2000}) we have $\| \sqrt{|u|} \|_{L(H)}^2 = \| u \|_{L(H)}$. It thus follows that $\sup(\sigma(\sqrt{|u|}Q_t \sqrt{|u|}))=\|\sqrt{|u|}Q_t \sqrt{|u|}\|_{L(H)} \leq \| u \|_{L(H)} \| Q_t\|_{L(H)}$, thus indeed $\id_H + 2\sign(u)\sqrt{|u|}Q_t \sqrt{|u|}$ is invertible (i) for all $u\in S^-(H)$ satisfying $\| u \|_{L(H)}\leq \frac{1}{2}\| Q_t \|_{L(H)}^{-1}$, and (ii) for all $u\in S^+(H)$.\par 
Let $ Y $ be an Ornstein-Uhlenbeck process as defined in   \eqref{eq:solY} (so $X=Y^*Y$), in particular, let $Y_0=y_0\in L_2(H,\R^n)$ be such that $y_0^*y_0=x_0$.
Note that if $u\in S^+(H)\cup S^-(H)$, then
\begin{equation*}
 \tr(uX_t)
 =
 \sign(u) \| Y_t \sqrt{|u|} \|_{L_2(H,\R^n)}^2
\end{equation*}
for all $t\geq 0$,
and note that $Y_t\sqrt{|u|}$ is a $L_2(H,\R^n)$-valued 
Gaussian with covariance operator $\cQ\in L(L_2(H,\R^n))$ given by $\cQ(B) = B \sqrt{|u|} Q_t \sqrt{|u|} $ and expectation $\mu= y_0 \e^{tA} \sqrt{|u|}$. Note that if $u\in S^-(H)$ satisfies $\| u \|_{L(H)}\leq \frac{1}{2} \| Q_t \|_{L(H)}^{-1}$ then $\| \cQ \|_{L(L_2(H,\R^n))} = \| \sqrt{|u|}Q_t \sqrt{|u|}\|_{L(H)}  \leq \| u \|_{L(H)}\|Q_t\|_{L(H)} <\frac{1}{2}$. Thus Lemmas~\ref{lemma:Lp(L2)} and~\ref{lemma:expexpnorm} imply
\begin{align*}
& \E \left[ 
 \exp( 
    -\tr(uX_t)
 )
 \right]
 = \E \left[ 
 \exp\left( 
    -\sign(u)\left\| Y_t \sqrt{|u|} \right\|_{L_2(H,\R^n)}^2
 \right)
 \right]
 \\ &\qquad =
 \det\left(\id_H + 2 \sign(u)\sqrt{|u|} Q_t \sqrt{|u|}\right)^{-\frac{n}{2}}
 \\ & \qquad \qquad \times
 \exp\left( 
   \left\langle y_0 \e^{tA} \sqrt{|u|} 
    (\id_H + 2 \sign(u)\sqrt{|u|} Q_t \sqrt{|u|})^{-1}, 
    y_0 \e^{tA} \sqrt{|u|} 
   \right\rangle_{L_2(H)}
 \right),
\end{align*}
whence~\eqref{eq:Laplacetrafoexplicit} follows.

\section{Proof of Proposition~\ref{prop:alpha_geq_n}
}\label{app:proof_Prop_alpha_geq_n}

We begin with some folklore\footnote{It is unclear to us whether this lemma also holds in infinite dimensions, but this suffices for our purposes.}:

\begin{lemma}\label{lem:measurable_EVD}
Let $n\in \N$. Then there exist Borel measurable mappings $\lambda_{1},\ldots,$ $\lambda_{n}\colon$ $ S(\R^n)\rightarrow \R$ and $h_{1},\ldots,h_{n}\colon S(\R^n)\rightarrow \R^n$ such that $(h_{k}(A))_{k=1}^n$ is an orthonormal basis for $\R^n$,
$|\lambda_{1}(A)|\geq |\lambda_{2}(A)| \geq \ldots \geq |\lambda_{n}(A)|$, and 
\begin{align*}
A = \sum_{k=1}^{n} \lambda_{k}(A) h_{k}(A)\otimes h_{k}(A)
\end{align*}
for all $A\in S(\R^n)$.
\end{lemma}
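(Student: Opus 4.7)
My plan is to construct $\lambda_1,\ldots,\lambda_n$ by sorting the signed eigenvalues of $A$ by decreasing absolute value, and then to construct $h_1,\ldots,h_n$ inductively using the Kuratowski--Ryll-Nardzewski measurable selection theorem.

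For the eigenvalues, I would begin with the classical fact (a consequence of Weyl's inequality) that the signed eigenvalues
$\mu_1(A)\geq\mu_2(A)\geq\cdots\geq\mu_n(A)$
of $A\in S(\R^n)$ are Lipschitz continuous functions of $A$. Sorting the $\mu_j(A)$ by decreasing absolute value and breaking ties by the fixed rule "smaller original index first" yields a Borel permutation $\sigma(A)\in S_n$, and setting $\lambda_k(A):=\mu_{\sigma(A)(k)}(A)$ produces Borel measurable functions with $|\lambda_1(A)|\geq\cdots\geq|\lambda_n(A)|$ for every $A\in S(\R^n)$.

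For the eigenvectors, I would proceed by induction on $k$. Assuming $h_1,\ldots,h_{k-1}\colon S(\R^n)\to \R^n$ have already been constructed as Borel maps forming an orthonormal system with $A\,h_j(A)=\lambda_j(A)h_j(A)$, I define the multifunction
\[
V_k(A):=\{h\in\R^n:\|h\|=1,\ (A-\lambda_k(A)\id)h=0,\ \langle h,h_j(A)\rangle=0\text{ for }j=1,\ldots,k-1\}.
\]
The values of $V_k$ are clearly closed. To check non-emptiness, note that $\lambda_1(A),\ldots,\lambda_n(A)$ enumerates the spectrum of $A$ with algebraic multiplicity; if $\ell$ denotes the number of indices $j\leq k$ with $\lambda_j(A)=\lambda_k(A)$, then the $\lambda_k(A)$-eigenspace of $A$ has dimension at least $\ell$, and by the induction hypothesis the vectors $\{h_j(A):j<k,\ \lambda_j(A)=\lambda_k(A)\}$ are $\ell-1$ orthonormal vectors in it (the remaining $h_j(A)$ are automatically orthogonal to this eigenspace by self-adjointness of $A$), leaving room for one more orthonormal vector.

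Finally, the graph $\{(A,h)\in S(\R^n)\times\R^n:h\in V_k(A)\}$ is Borel because the defining conditions are jointly Borel in $(A,h)$ (recall $\lambda_k$ and $h_j$ for $j<k$ are Borel). Since $V_k$ has closed non-empty values in the Polish space $\R^n$, this graph measurability implies weak measurability of $V_k$, so the Kuratowski--Ryll-Nardzewski theorem yields a Borel measurable selection $h_k\colon S(\R^n)\to\R^n$ with $h_k(A)\in V_k(A)$, completing the induction. The main obstacle is the bookkeeping to verify non-emptiness of $V_k(A)$ at every $A$; once this is in place the measurable selection machinery is essentially off-the-shelf.
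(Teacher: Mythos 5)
Your proposal takes a genuinely different (and more constructive) route than the paper. The paper makes a single appeal to Azoff's Borel selection theorem [Azoff 1974, Theorem 1] applied to the Borel set
\[
E = \Bigl\{ (A,(\lambda_k)_{k=1}^{n},(h_k)_{k=1}^n)\in S(\R^n)\times\R^n\times \R^{n\times n}
 : \langle h_k, h_j \rangle = 1_{\{k=j\}},\ |\lambda_1|\geq\cdots\geq |\lambda_n|,\ A = \textstyle\sum_k \lambda_k h_k \otimes h_k \Bigr\},
\]
whose sections over $A$ are nonempty and closed; the selection theorem hands back Borel maps $\lambda_k$ and $h_k$ in one stroke, with no eigenvalue perturbation theory and no induction. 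You instead pre-compute $\lambda_k$ via Weyl continuity plus a deterministic re-sort, and then build the eigenvectors one at a time by an inductive measurable selection. Both strategies work, and your non-emptiness argument for $V_k(A)$ (counting multiplicities among $j\le k$, and using self-adjointness to dispose of the remaining $h_j$) is correct; the paper's route simply sidesteps the bookkeeping entirely.

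There is, however, one imprecision worth flagging. The step ``graph measurability implies weak measurability of $V_k$, so Kuratowski--Ryll-Nardzewski applies'' is not a general fact: for a closed-valued multifunction between Polish spaces, Borel measurability of the graph does \emph{not} by itself imply weak (Borel) measurability; that implication holds e.g.\ after completing the $\sigma$-algebra with respect to a $\sigma$-finite measure, which would only give universally measurable rather than Borel selections. What saves you is that the sections $V_k(A)$ are closed subsets of $\R^n$ and hence $\sigma$-compact (indeed compact, as they sit in the unit sphere), so you should invoke a Borel uniformization theorem for Borel sets with $\sigma$-compact sections (Arsenin--Kunugui, or equivalently Azoff's Theorem~1, the very result the paper cites) directly, rather than routing through weak measurability and Kuratowski--Ryll-Nardzewski. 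With that substitution, your inductive argument is complete; as a bonus you would then notice that the same uniformization theorem applied once to the set $E$ above makes the eigenvalue continuity and the induction unnecessary, which is exactly the paper's proof.
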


\begin{proof}
Apply e.g.~\cite[Theorem 1]{Azoff:1974}
with $X= S(\R^n)$, $Y=\R^n\times \R^{n\times n}$,
and 
\begin{equation*}
E = \left\{ 
 (A,(\lambda_k)_{k=1}^{n},(h_k)_{k=1}^n)\in X\times Y
 \colon 
 \begin{aligned}
 &\langle h_k, h_j \rangle_{\R^n} = 1_{\{k=j\}},\, 
 |\lambda_1|\geq |\lambda_2| \geq \ldots \geq |\lambda_n|,
 \\
 &A = \sum_{k=1}^n \lambda_k h_k \otimes h_k
 \end{aligned}
\right\}.
\end{equation*}
\end{proof}

We will also need the following:

\begin{lemma}\label{lem:approx_finrank}
Let $H$ be a separable Hilbert space, $(h_k)_{k\in \N}$ an orthonormal basis for $H$, $A\in S(H)$ of rank $m$ for some $m\in \N$, and let $P_n\in L(H)$ be given by $P_n=\sum_{k=1}^{n} h_k \otimes h_k$, $n\in \N$.
Then $\lim_{n\rightarrow \infty} \| A - P_n A P_n \|_{L_2(H)} = 0$. 
\end{lemma}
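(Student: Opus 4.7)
The plan is to reduce to a finite sum using the spectral theorem for compact self-adjoint operators and then exploit that $P_n\to \id_H$ in the strong operator topology.

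Since $A\in S(H)$ has rank $m$, the spectral theorem for compact self-adjoint operators gives
\begin{equation*}
A = \sum_{j=1}^{m} \lambda_j e_j\otimes e_j
\end{equation*}
for some $\lambda_1,\ldots,\lambda_m\in \R\setminus\{0\}$ and some orthonormal system $(e_j)_{j=1}^{m}$ in $H$. Consequently $P_n A P_n = \sum_{j=1}^{m} \lambda_j (P_n e_j)\otimes (P_n e_j)$, so by the triangle inequality the claim reduces to showing that $\|e_j\otimes e_j - (P_n e_j)\otimes (P_n e_j)\|_{L_2(H)}\to 0$ for each fixed $j\in\{1,\ldots,m\}$.

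For this I would use the telescoping identity
\begin{equation*}
e_j\otimes e_j - (P_n e_j)\otimes (P_n e_j)
= (e_j - P_n e_j)\otimes e_j + (P_n e_j)\otimes (e_j - P_n e_j)
\end{equation*}
together with the fact that $\|h\otimes g\|_{L_2(H)} = \|h\|_H \|g\|_H$ and $\|P_n\|_{L(H)}\leq 1$ to obtain
\begin{equation*}
\|e_j\otimes e_j - (P_n e_j)\otimes (P_n e_j)\|_{L_2(H)}
\leq 2\,\|e_j - P_n e_j\|_H.
\end{equation*}

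Since $(h_k)_{k\in \N}$ is an orthonormal basis for $H$, Parseval's identity yields $\|e_j - P_n e_j\|_H\to 0$ for each $j$, and as the sum over $j$ is finite, the triangle inequality gives the conclusion. There is no essential obstacle: the proof is entirely elementary once the spectral decomposition has been invoked.
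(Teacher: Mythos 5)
Your proof is correct and follows essentially the same approach as the paper: write $A$ in its spectral decomposition as a finite sum of rank-one terms, telescope $e_j\otimes e_j - (P_n e_j)\otimes (P_n e_j)$, and use $\|f\otimes g\|_{L_2(H)} = \|f\|_H\|g\|_H$ together with $P_n \to \id_H$ strongly. The paper's one-line proof uses the same telescoping identity applied directly to the sum rather than term-by-term, but the argument is identical in substance.
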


\begin{proof}
Write $A = \sum_{k=1}^{m} \lambda_k h_k \otimes h_k$ for some $\lambda_1,\ldots,\lambda_m\in \R$ and some orthonormal system $h_1,\ldots,h_m\in H$. Then 
$A- P_n A P_n = \sum_{k=1}^{m} \lambda_k h_k \otimes (h_k - P_n h_k) + \sum_{k=1}^{m} \lambda_k (h_k - P_n h_k)\otimes P_n h_k $, and the result follows 
from the fact that $\| f \otimes g \|_{L^2(H)} = \|f \|_H \| g \|_H$ for any $f,g\in H$.
\end{proof}

\begin{lemma}\label{lem:dense_in_kernel}
Let $H$ be a separable Hilbert space, let $V\subseteq H$ be a dense linear subspace, let $m\in \N$, and let $\eps>0$. Then there exists a Borel measurable mapping $\phi_{\eps}\colon H^{m+1}\rightarrow H$ such that $\phi_{\eps}(g_1,\ldots,g_m,x) \in V$, $\| \phi_{\eps}(g_1,\ldots,g_m,x) - x\|_{H}<\eps$, and $\phi_{\eps}(g_1,\ldots,g_m,x)$ is orthogonal to $\operatorname{span}\{g_1,\ldots,g_m\}$ whenever $x$ is orthogonal to $\operatorname{span}\{g_1,\ldots,g_m\}$.
\end{lemma}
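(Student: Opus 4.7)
The plan is to use a countable dense sequence $(v_k)_{k\in\N}$ in $V$ and to define $\phi_{\eps}$ via a case split on the Borel set
\[
E := \{(g_1,\dots,g_m,x)\in H^{m+1}\colon x\perp \operatorname{span}\{g_1,\dots,g_m\}\}=\bigcap_{i=1}^m\{\langle x,g_i\rangle=0\}.
\]
Off $E$ the orthogonality requirement is vacuous, so I would take $\phi_{\eps}(g_1,\dots,g_m,x)$ to be the first $v_k$ satisfying $\|v_k-x\|<\eps$; a ``first-hit'' rule on a countable family of Borel conditions is Borel, and lands trivially in $V$ within distance $\eps$ of $x$.

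On $E$ the substance lies. First I would apply Gram--Schmidt to $(g_1,\dots,g_m)$ with the convention that the $i$-th vector $e_i:=0$ whenever $g_i$ lies in the linear span of its predecessors; the resulting map $(g_1,\dots,g_m)\mapsto(e_1,\dots,e_m)$ is Borel, and $(e_i)_{i\in I}$ with $I:=\{i\colon e_i\neq 0\}$ is an orthonormal basis of $W:=\operatorname{span}\{g_1,\dots,g_m\}$. For a tolerance $\delta=\delta(\eps,m)>0$ to be determined at the end, I would Borel-measurably pick $f_0\in V$ with $\|f_0-x\|<\delta$ and, for each $i\in I$, $f_i\in V$ with $\|f_i-e_i\|<\delta$ (setting $f_i:=0$ when $i\notin I$), again via the first-hit rule on $(v_k)$. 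The $|I|\times|I|$ matrix $M$ with entries $M_{jk}:=\langle f_k,e_j\rangle$, $j,k\in I$, is a perturbation of the identity of operator norm at most $m\delta$, hence is invertible with $\|M^{-1}\|\le 2$ for $\delta$ small. Since $x\perp W$ on $E$, one has $b_j:=\langle f_0,e_j\rangle=\langle f_0-x,e_j\rangle$ with $|b_j|\le\delta$, so $c:=M^{-1}b$ satisfies $|c_j|\le 2\sqrt{m}\,\delta$.

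Setting
\[
 \phi_{\eps}(g_1,\dots,g_m,x) := f_0 - \sum_{k\in I} c_k f_k \in V,
\]
the identities $\langle\phi_{\eps},e_j\rangle = b_j-\sum_k c_k M_{jk}=0$ for all $j\in I$ give $\phi_{\eps}\perp W$, and
\[
 \|\phi_{\eps}-x\|\le\|f_0-x\|+\sum_{k\in I}|c_k|\,\|f_k\| < \delta + 2m^{3/2}(1+\delta)\delta,
\]
which is strictly less than $\eps$ once $\delta$ is chosen small enough in terms of $\eps$ and $m$. This completes the construction on $E$.

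The main obstacle is controlling Borel measurability as the rank of $W$ varies with $(g_1,\dots,g_m)$. This is resolved by observing that Gram--Schmidt with the zero convention, matrix inversion on the Borel set where $M$ is a small perturbation of the identity on $\R^I$, and first-hit selections on the countable sequence $(v_k)$ are all Borel operations, so all ingredients combine into a Borel map; if preferred, one may partition $H^m$ into the Borel sets $\{(g_1,\dots,g_m)\colon\dim W=r\}$ for $r=0,\dots,m$ and run the argument piecewise on each.
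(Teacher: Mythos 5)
Your proof is correct, and it follows the same overall strategy as the paper (approximate $x$ by an element of $V$, then make a small correction to restore orthogonality to $W=\operatorname{span}\{g_1,\ldots,g_m\}$), but the details of the correction step are genuinely different. The paper fixes once and for all an orthonormal basis $(h_n)$ of $H$ with $h_n\in V$, takes the approximant $x^{(\eps)}$ to be a truncation of $x$ in this basis, and then uses Gauss elimination on the $d\times N$ matrix $\langle w_j,h_k\rangle$ to select $d$ coordinates $h_{i_1},\ldots,h_{i_d}$ giving an invertible $d\times d$ minor $A$; the correction $y^{(\eps)}$ is a combination of those $h_{i_k}$. You instead approximate the orthonormal basis $(e_i)_{i\in I}$ of $W$ itself by nearby elements $f_i\in V$; the matrix $M_{jk}=\langle f_k,e_j\rangle$ is then automatically a small perturbation of $\id_{\R^{|I|}}$ with $\|M^{-1}\|\leq 2$, with no Gauss-elimination argument needed, and the correction is a combination of the $f_i$. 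Your route trades the paper's combinatorial minor-selection for a perturbation bound; it is arguably cleaner and gives slightly more uniform control of $M^{-1}$, at the cost of having to pick $m$ auxiliary approximants $f_i$ measurably (which you handle correctly via the first-hit rule and the remark about partitioning $H^m$ by $\dim W$). The measurability bookkeeping — Gram--Schmidt with the zero convention, Borel case split on $E$, first-hit selections from a countable dense sequence — is all sound.

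One small stylistic remark: your bound $|c_j|\leq 2\sqrt{m}\,\delta$ is obtained by bounding $\|c\|_2\leq \|M^{-1}\|\,\|b\|_2\leq 2\sqrt{m}\,\delta$ and then using $|c_j|\leq\|c\|_2$; it would read slightly more transparently if you stated this chain explicitly, but it is correct as written.
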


\begin{proof}
Let $(h_n)_{n\in \N}$ be an orthonormal basis for $H$ such that $h_n\in V$ for all $n\in \N$ (obtained by applying Gram-Schmidt to a sequence in $V$ that is dense in $H$). Let $g_1,\ldots,g_m,x\in H$ be given and let $(w_k)_{k=1}^{d}$ be an orthonormal basis for $\operatorname{span}\{g_1,\ldots,g_m\}=:W$ (obtained by applying Gram-Schmidt to $h_1,\ldots,h_m$, here $d\leq m$). Let $P_n\colon H \rightarrow H$, $n\in \N$, be given by $P_n = \sum_{k=1}^{n} h_k \otimes h_k$ and let $N\in \N$ be the smallest integer for which $\{ P_N w_1,\ldots, P_N w_d\}$ are linearly independent. By Gauss elimination applied to the $d\times N$-dimensional matrix $\tilde{A}_{j,k} = \langle w_j, h_k \rangle_H$ ($1\leq j\leq d$, $1\leq k \leq N$) we can find the smallest $1\leq i_{1}< \ldots <i_{d} \leq N$ such that the matrix $A\in \R^{d\times d}$ satisfying $A_{j,k} = \langle w_j, h_{i_k} \rangle_H$ ($j,k\in \{1,\ldots,d\}$) is invertible.\par 
Let $N\in \N$ be the smallest value for which \begin{equation*}
\sum_{n=N+1}^{\infty} |\langle x, h_n\rangle_H|^2< \eps \min\left(\tfrac{1}{2}, \tfrac{1}{4 \sqrt{d}\| A^{-1} \|_{L(\R^d)}}\right);
\end{equation*}
we set $x^{(\eps)}=\sum_{n=1}^{N} \langle x, h_n\rangle_H h_n$. If $x \notin W^{\perp}$, 
we simply set $\phi_{\eps}(g_1,\ldots,g_m,x) = x^{(\eps)}$.\par 
If $x \in W^{\perp}$, we set  $x^{(\eps)}_{\text{err}} = (\langle x^{(\eps)}, w_j \rangle_H)_{j=1}^{d}\in \R^d$, $c= A^{-1}x^{(\eps)}_{\text{err}}$, and  $y^{(\eps)} = \sum_{k=1}^{d} c_k h_{i_k}$. As $x\in W^{\perp}$, we have 
$| \langle x^{(\eps)}, w_j \rangle_H|
= | \langle x^{(\eps)} - x , w_j \rangle_H | \leq \| x^{(\eps)} - x \|_{H} \leq \tfrac{\eps}{2 \sqrt{d}\| A^{-1}\|_{L(\R^d)}} $, whence $\| y^{(\eps)} \|_{H} = \| A^{-1} x^{(\eps)}_{\text{err}} \|_{\R^d} \leq \| A^{-1}\|_{L(\R^d)} \| x^{(\eps)}_{\text{err}} \|_{\R^d}\leq  \frac{\eps}{2}$. Finally, note that $x^{(\eps)} - y^{(\eps)} \in V$, $\| x - (x^{(\eps)} - y^{(\eps)}) \|_H < \frac{\eps}{2} + \frac{\eps}{2} = \eps$, and $\langle x^{(\eps)} - y^{(\eps)}, w_j \rangle_H = \langle x^{(\eps)}, w_j\rangle_H - (Ac)_{j} = \langle x^{(\eps)}, w_j\rangle_H - \langle x^{(\eps)}, w_j\rangle_H = 0$ for all $j\in \{1,\ldots,d\}$. Noting that the construction of $w_1,\ldots,w_d$, $i_1,\ldots,i_d$, $A$, $N$,  $x^{(\eps)}$, $W^{\perp}$ and $y^{(\eps)}$ is Borel measurable, we conclude that $\phi_{\eps}(g_1,\ldots,g_m,x):=x^{(\eps)}-y^{(\eps)}1_{W^{\perp}}(x)$, $g_1,\ldots,g_m,x\in H$, is indeed a measurable mapping satisfying the desired properties.
\end{proof}

\begin{lemma}\label{lem:GS_cont}
Let $m\in \N$, $H$ a Hilbert space, $(h_0,\ldots,h_m)$ an orthonormal 
system in $H$, let $(\tilde{g}_1,\ldots,\tilde{g}_m)$ be another orthonormal system in $H$, and let $(h_0,g_1,\ldots,g_m)$ be obtained by applying the Gram-Schmidt procedure to $(h_0,\tilde{g}_1,\ldots,\tilde{g}_m)$. Then there exists a constant $C_m$ (depending only on $m$) such that $\sup_{k\in \N} \| g_k - h_k \|_{H} \leq C_m \sup_{k\in \N} \| \tilde{g}_k - h_k \|_{H}.$
\end{lemma}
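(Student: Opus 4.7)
Set $\eps := \sup_{k\in\{1,\ldots,m\}} \|\tilde g_k - h_k\|_H$. The inequality is trivial (with $C_m = 2$) whenever $\eps \geq 1/2$, since $g_k$ and $h_k$ are unit vectors, so we may assume $\eps < 1/2$. The plan is to prove, by induction on $k\in\{1,\ldots,m\}$, that there exist constants $C_1,\ldots,C_m$ depending only on $m$ (in fact only on $k$) such that $\|g_k - h_k\|_H \leq C_k \eps$. Taking $C_m := \max_k C_k$ will yield the lemma.

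The key orthogonality observations, exploited repeatedly, are the following. Since $h_0 \perp h_k$ for $k\geq 1$, one has
\begin{equation*}
|\langle \tilde g_k, h_0 \rangle_H| = |\langle \tilde g_k - h_k, h_0 \rangle_H| \leq \|\tilde g_k - h_k\|_H \leq \eps.
\end{equation*}
Similarly, for $1\leq j < k$, using $\langle h_k, h_j\rangle_H = 0$ and the inductive hypothesis,
\begin{equation*}
|\langle \tilde g_k, g_j \rangle_H| \leq |\langle \tilde g_k - h_k, h_j\rangle_H| + |\langle \tilde g_k, g_j - h_j\rangle_H| \leq \eps + C_j \eps.
\end{equation*}

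Now let $v_k := \tilde g_k - \langle \tilde g_k, h_0\rangle_H h_0 - \sum_{j=1}^{k-1}\langle \tilde g_k, g_j\rangle_H g_j$, so that $g_k = v_k/\|v_k\|_H$ by the Gram--Schmidt construction. Combining the two estimates above with the triangle inequality,
\begin{equation*}
\|v_k - h_k\|_H \leq \|\tilde g_k - h_k\|_H + |\langle \tilde g_k, h_0\rangle_H| + \sum_{j=1}^{k-1}|\langle \tilde g_k, g_j\rangle_H| \leq \Bigl(2 + \sum_{j=1}^{k-1}(1+C_j)\Bigr)\eps =: D_k \eps.
\end{equation*}
Hence $\|v_k\|_H \geq 1 - D_k \eps$, which is bounded below by, say, $1/2$ provided $\eps$ is small enough (otherwise we again fall into the trivial regime). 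The normalization step then yields
\begin{equation*}
\|g_k - h_k\|_H \leq \Bigl\|\frac{v_k}{\|v_k\|_H} - \frac{h_k}{\|v_k\|_H}\Bigr\|_H + \Bigl|\frac{1}{\|v_k\|_H} - 1\Bigr|\|h_k\|_H \leq \frac{\|v_k - h_k\|_H + |\,\|v_k\|_H - 1\,|}{\|v_k\|_H} \leq 4 D_k \eps,
\end{equation*}
so $C_k := 4 D_k$ works in the inductive step.

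The only subtle point is bookkeeping the constants and handling the non-small-$\eps$ regime uniformly; this is absorbed into the maximum $C_m := \max_{k\leq m}(4 D_k) \vee 2$. The main obstacle is simply organizing the induction so that the constants $C_k$ depend on $C_1, \ldots, C_{k-1}$ in a controlled, $m$-only way—there is no analytic difficulty, only careful bookkeeping.
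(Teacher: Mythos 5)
Your proof is correct and fills in the details of precisely the inductive argument the paper alludes to (the paper's own ``proof'' is the single line ``This lemma is easily verified by induction on $m$''), with the right decomposition of the Gram--Schmidt step into the orthogonality estimates against $h_0$ and the previously constructed $g_j$, followed by the normalization error bound. One trivial slip: in the large-$\eps$ regime $C_m=2$ does not suffice (with $\eps\geq 1/2$ and $\|g_k-h_k\|\leq 2$ one only gets $\|g_k-h_k\|\leq 4\eps$), but this is harmlessly absorbed by your final choice $C_m=\max_{k\leq m}4D_k$.
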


\begin{proof}
This lemma is easily verified by induction on $m$.
\end{proof}

These lemmas allow us to prove the following approximation lemma, which is crucial for the proof of Proposition~\ref{prop:alpha_geq_n}.

\begin{lemma}\label{lem:approx_ONS2}
Let $H$ be a separable Hilbert space, $\eps>0$, $V\subseteq H$ a dense subspace of $H$, $(\Omega,\cF,\P)$ a probability space, $m\in \N$, $Q\in S^+(H)$ with $\operatorname{rank}(Q)>m$, and let $X\colon \Omega \rightarrow S^+(H)$ be such that $\operatorname{rank}(X)=m$ a.s. Then there exist $\cF$-measurable $(\lambda_{k})_{k=1}^{m},\,(\lambda_{k}^{(n)})_{k=1}^{m}\colon \Omega \rightarrow [0,\infty)$ and $\cF$-measurable $(h_k)_{k=1}^{m+1},\,\colon \Omega \rightarrow H^{m+1}$, $(h_k^{(n)})_{k=1}^{m} \colon \Omega \rightarrow H^{m}$, $n\in \N$,
with the following properties:
\begin{enumerate}
\item \label{it:approx_ONS_evalueconverge} $\lim_{n\rightarrow \infty} \lambda_k^{(n)} = \lambda_k$ a.s.\ for all $k\in \{1,\ldots,m\}$,
\item \label{it:approx_ONS_eigendecomposition} $(h_k)_{k=1}^{m+1}$ is a.s.\ an orthonormal system in $H$
and $X = \sum_{k=1}^{m} \lambda_k h_k \otimes h_k$ 
a.s.,
\item \label{it:approx_ONS_Qhm+1neq0} $\P(Qh_{m+1}\neq 0)>1-\eps$ and $h_{m+1}\in V$ a.s.,
\item \label{it:approx_ONS_hnONS} $(h_1^{(n)},\ldots,h_m^{(n)},h_{m+1})$ is an orthonormal system in $H$ a.s. for all $n\in \N$,
\item \label{it:approx_ONS_hninV} $h_k^{(n)}\in V$ a.s.\ for all $n\in \N$, $k\in \{1,\ldots, m\}$,
\item \label{it:approx_ONS_evectorsconverge} $\lim_{n\rightarrow \infty} h_k^{(n)} = h_k$ a.s.\ for all $k\in \{1,\ldots,m\}$.
\end{enumerate}
\end{lemma}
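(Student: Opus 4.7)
The plan is to proceed in three steps: (a) construct a measurable eigendecomposition $X=\sum_{k=1}^m \lambda_k h_k\otimes h_k$, (b) produce a vector $h_{m+1}\in V$ orthogonal to $W:=\operatorname{span}\{h_1,\ldots,h_m\}$ with $Qh_{m+1}\neq 0$ on a set of probability at least $1-\eps$, and (c) approximate each $h_k$ by vectors $h_k^{(n)}\in V\cap h_{m+1}^{\perp}$ forming an orthonormal system.

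For (a), fix an orthonormal basis $(e_j)_{j\in\N}$ of $H$ and consider the finite-rank self-adjoint compressions $P_n X P_n$, where $P_n=\sum_{j=1}^n e_j\otimes e_j$. Lemma~\ref{lem:approx_finrank} gives $\|P_nXP_n-X\|_{L_2(H)}\to 0$ a.s., and Lemma~\ref{lem:measurable_EVD} applied coordinate-wise produces measurable eigenvalues and eigenvectors of each $P_nXP_n$. Because $X$ has rank exactly $m$, the top $m$ eigenvalues of $P_nXP_n$ are separated from the rest by a positive measurable gap for $n$ large, so a standard perturbation argument (or a direct measurable selection on the Grassmannian) yields $\cF$-measurable $\lambda_1,\ldots,\lambda_m$ and an orthonormal system $(h_1,\ldots,h_m)$ diagonalising $X$, giving~\ref{it:approx_ONS_eigendecomposition}.

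For (b), since $\operatorname{rank}(Q)>m$, a dimension count shows that $(\ker Q)^{\perp}\cap W^{\perp}$ contains a unit vector. A Kuratowski--Ryll-Nardzewski type measurable selection produces a measurable unit vector $\tilde h_{m+1}\in(\ker Q)^{\perp}\cap W^{\perp}$, so that $Q\tilde h_{m+1}\neq 0$ a.s. Applying Lemma~\ref{lem:dense_in_kernel} with $(g_1,\ldots,g_m)=(h_1,\ldots,h_m)$, $x=\tilde h_{m+1}$, and a deterministic parameter $\eps'$ yields $\phi_{\eps'}\in V$ orthogonal to $W$ and within $\eps'$ of $\tilde h_{m+1}$. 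On the measurable event $A_{\eps'}:=\{\|Q\|_{L(H)}\,\eps'<\|Q\tilde h_{m+1}\|_H\}$ one has $Q\phi_{\eps'}\neq 0$; since $\P(A_{\eps'})\uparrow 1$ as $\eps'\downarrow 0$, pick $\eps'$ small enough that $\P(A_{\eps'})\ge 1-\eps$. Setting $h_{m+1}:=\phi_{\eps'}/\|\phi_{\eps'}\|_H$ on $A_{\eps'}$ and any fixed unit vector of $V$ off $A_{\eps'}$ gives~\ref{it:approx_ONS_Qhm+1neq0}. For (c), for each $k\in\{1,\ldots,m\}$ and $n\in\N$, apply Lemma~\ref{lem:dense_in_kernel} with $g_1=h_{m+1}$ and $\eps=1/n$ to obtain $\tilde h_k^{(n)}\in V\cap h_{m+1}^{\perp}$ with $\|\tilde h_k^{(n)}-h_k\|_H<1/n$; a Gram--Schmidt procedure applied to $(\tilde h_1^{(n)},\ldots,\tilde h_m^{(n)})$ inside the linear subspace $V\cap h_{m+1}^{\perp}$ produces $(h_1^{(n)},\ldots,h_m^{(n)})$ which, together with $h_{m+1}$, form an orthonormal system, proving~\ref{it:approx_ONS_hnONS}--\ref{it:approx_ONS_hninV}. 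Lemma~\ref{lem:GS_cont} provides $\sup_k\|h_k^{(n)}-h_k\|_H\le C_m/n\to 0$, i.e.~\ref{it:approx_ONS_evectorsconverge}, and then $\lambda_k^{(n)}:=\langle Xh_k^{(n)},h_k^{(n)}\rangle_H$ converges a.s.\ to $\lambda_k$, yielding~\ref{it:approx_ONS_evalueconverge}.

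The main obstacle is step (b): one must produce $h_{m+1}$ simultaneously in $V$, in $W^{\perp}$, and outside $\ker Q$, in a measurable way. The approximation $\phi_{\eps'}$ delivered by Lemma~\ref{lem:dense_in_kernel} may a priori drift into $\ker Q$, which is precisely why the lemma only demands the event $\{Qh_{m+1}\neq 0\}$ to hold with probability at least $1-\eps$ rather than almost surely; the cut-off on $A_{\eps'}$ above reconciles the construction with this probabilistic bound. The first and third steps are essentially bookkeeping once the correct approximation and Gram--Schmidt lemmas are in hand.
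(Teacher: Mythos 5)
Your three-step outline mirrors the paper's proof, but step~(a) --- producing a measurable orthonormal eigensystem $(h_1,\ldots,h_m)$ for $X$ --- is where the real technical work lives, and your treatment of it has a genuine gap. You invoke ``a standard perturbation argument (or a direct measurable selection on the Grassmannian)'', but neither is standard in this setting. The paper's authors explicitly remark, in the footnote to Lemma~\ref{lem:measurable_EVD}, that they do not know whether measurable eigendecompositions exist for $S(H)$ in infinite dimensions. The obstruction is non-uniqueness: when eigenvalues of $X(\omega)$ have multiplicity, perturbation theory controls spectral \emph{projections}, not individual eigenvectors, so the eigenvectors $P_n e_k^{(n)}$ of $P_n^*XP_n$ obtained from Lemma~\ref{lem:measurable_EVD} need not converge as $n\to\infty$ even though the spectral gap to zero is eventually uniform. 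The paper resolves this by extracting a \emph{weakly} convergent subsequence of $(P_{n}e_k^{(n)})_{k}$ in $L^2(\Omega;H^m)$ via Banach--Alaoglu, verifying that the weak limit $(h_k)$ satisfies $X=\sum_k\lambda_k h_k\otimes h_k$, and then comparing $\tr(X)=\sum_k\lambda_k\|h_k\|_H$ with $\tr(X)=\sum_k\lambda_k$ to force $\|h_k\|_H=1$ a.s.\ and hence upgrade to strong convergence; this argument is the core of the proof and does not appear in your sketch. Your appeal to a Kuratowski--Ryll-Nardzewski selection in step~(b) has the same flavour of gap: measurability of $\omega\mapsto(\ker Q)^{\perp}\cap W(\omega)^{\perp}$ would need to be verified, not assumed.

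The paper sidesteps any abstract selection theorem in step~(b) by a deterministic enumeration: fix (non-random) orthonormal eigenvectors $g_1,\ldots,g_{m+1}$ of $Q$ with nonzero eigenvalues, apply Gram--Schmidt to $(h_1,\ldots,h_m,g_1,\ldots,g_{m+1})$, and let $\hat h_{m+1}$ be the first $\tilde g_i$ with $Q\tilde g_i\neq 0$; a short linear-algebra argument shows such an $i$ exists because $\operatorname{rank}(Q)>m$, and measurability is automatic since one is taking a first index. Your cut-off on the event $\{\|Q\|_{L(H)}\eps'<\|Q\tilde h_{m+1}\|_H\}$ together with Lemma~\ref{lem:dense_in_kernel} to land in $V$ is essentially the same device the paper uses via $\phi_{\eta/2}$, so that part is fine. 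Step~(c) in your sketch is a correct variant (the paper Gram--Schmidts $(h_{m+1},\hat h_1^{(n)},\ldots,\hat h_m^{(n)})$ using the already-constructed finite-dimensional eigenvectors $\hat h_k^{(n)}$, whereas you re-approximate $h_k$ directly via Lemma~\ref{lem:dense_in_kernel}); but since it presupposes the $(h_k)$ from step~(a), the gap there carries through.
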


\begin{proof}
Let $(g_n)_{n\in \N}$ be an orthonormal basis for $H$ such that $g_n\in V$ for all $n\in \N$ (see proof of Lemma~\ref{lem:dense_in_kernel}). Let $P_n \in L(\R^n, H)$ be defined by $P_n x=\sum_{k=1}^{n} x_k g_k$, $x\in \R^n$ (i.e., $P_n$ is an isometry onto its range). Note that $P_n^* X P_n \colon \Omega \rightarrow S^+(\R^n)$ is $\cF$-measurable and of rank at most $m$, whence by Lemma~\ref{lem:measurable_EVD} there exist $\cF$-measurable $\lambda^{(n)}_{1},\ldots,\lambda^{(n)}_{m}\colon \Omega\rightarrow \R$ and $e^{(n)}_{1},\ldots,e^{(n)}_{m}\colon \Omega \rightarrow \R^n$ such that $(e^{(n)}_{k})_{k=1}^{m}$ is an orthonormal system in $\R^n$, $\lambda_1^{(n)} \geq \ldots \geq \lambda_m^{(n)}\geq 0$, and 
\begin{equation}\label{eq:PnXPn_decomp}
P_n^* X P_n = \sum_{k=1}^{m} \lambda^{(n)}_{k} e^{(n)}_{k} \otimes e^{(n)}_{k},
\end{equation}
i.e.,
\begin{equation}\label{eq:PnPnXPnPn_decomp}
P_n P_n^* X P_n P_n^* 
=
\sum_{k=1}^{m} \lambda^{(n)}_{k} (P_n e^{(n)}_{k}) \otimes (P_n e^{(n)}_{k}) 
\in S^+(H)
.
\end{equation}
Note moreover that for all $n\in \N$ we have that $(P_n e^{(n)}_{k})_{k=1}^{m}$ is an orthonormal system in $H$ satisfying $P_n e^{(n)}_k \in V$ a.s.\ for all $k\in \{1,\ldots,m\}$. \par 
By Lemma~\ref{lem:approx_finrank} we have that 
\begin{equation}\label{eq:approx_X}
\lim_{n\rightarrow \infty} \| X - P_n P_n^* X P_n P^*_n \|_{L_2(H)} = 0 \quad \text{a.s.}, 
\end{equation}
whence by Weyl's inequality we have $\lim_{n\rightarrow \infty} (\lambda_{k}^{(n)})_{k=1}^{m}  = (\lambda_{k})_{k=1}^{m}$ a.s., where $(\lambda_{k})_{k=1}^{m}$ are the non-zero eigenvalues of $X$ in decreasing order (note that in particular, $(\lambda_{k})_{k=1}^{m}$ is $\cF$-measurable). We have thus proven~\ref{it:approx_ONS_evalueconverge}.\par 
Note that as an eigenvector basis is not uniquely determined, we cannot conclude that $(P_n e_{k}^{(n)})_{k=1}^{m}$ converges. However, by Banach-Alaoglu, there exists a subsequence $((P_{n_j} e_{k}^{(n_j)})_{k=1}^{m})_{j\in \N}$ that converges weakly in $L^2(\Omega,H^m)$ to some limit $(h_{k})_{k=1}^{m} \in L^2(\Omega,H^m)$. This,~\eqref{eq:PnPnXPnPn_decomp}, and~\eqref{eq:approx_X} imply that 
\begin{equation}\label{eq:Xdecomp}
X = \sum_{k=1}^{m} \lambda_k h_k \otimes h_k.
\end{equation}
To see that $(h_k)_{k=1}^{n}$ form an orthonormal system, first of all note that 
\begin{equation*}
\E(|\langle h_k, h\rangle_H| 1_{B}) = \lim_{j\rightarrow \infty} \E(|\langle P_{n_j} e_k^{(n_j)}, h\rangle_H| 1_{B}) \leq \| h\|_{H} \P(B)
\end{equation*}
for all $h\in H$ and all $B\in \cF$, whence $\| h_k \|_H \leq 1$ a.s. It follows from
\eqref{eq:Xdecomp} that 
$\tr( X ) = \sum_{k=1}^{m} \lambda_k \|h_k\|_H$.
On the other hand, as $(\lambda_k)_{k=1}^{m}$ are the eigenvalues of $X$, we have $\tr(X) = \sum_{k=1}^{m} \lambda_k$. As $\lambda_k>0$ a.s.\ for all $k\in \{1,\ldots,m\}$, it follows that $\| h_k \|_{H} = 1$ a.s., and thus $\| h_k \|_{L^2(\Omega,H)} =1 = \| P_{n_j} e_k^{(n_j)} \|_{L^2(\Omega,H)}$. This in combination with the weak convergence in $L^2(\Omega,H)$ implies that $P_{n_j} e_k^{(n_j)} $ converges (strongly) in $L^2(\Omega,H)$ to $h_k$ for all $k\in \{1,\ldots,m\}$, which in particular  implies that $(h_k)_{k=1}^{m}$ is an orthonormal system. By again passing to subsequence of $((P_{n_j} e_k^{(n_j)})_{k=1}^{m})_{j\in \N}$ we obtain a sequence $((\hat{h}^{(n)}_{k})_{k=1}^{m})_{n\in \N}$ of $\cF$-measurable sequence $V$-valued orthonormal systems in $H$,
and $\lim_{n\rightarrow \infty}\hat{h}^{(n)}_{k} = h_k$ a.s.\ (and in $L^2(\Omega,H)$).\par 
We now turn to constructing $h_{m+1}$: let $(g_k)_{k=1}^{m+1}$ be an orthonormal system of eigenvectors of $Q$ corresponding to non-zero eigenvalues (such an orthonormal system exists as $\operatorname{rank}(Q)>m$). By applying the Gram-Schmidt procedure to $(h_1,\ldots,h_m,g_1,\ldots,g_{m+1})$ we obtain $(h_1,\ldots, h_m, \tilde{g}_1,\ldots,\tilde{g}_{m+1})$. Note that by our choice of $(g_k)_{k=1}^{m+1}$, there must be at least one $k\in \{1,\ldots,m+1\}$ for which $Q \tilde{g}_k \neq 0$; we set $\hat{h}_{m+1} = \tilde{g}_{i}$, where $i=\inf\{ k \colon Q\tilde{g}_k \neq 0\}$. Note that $\hat{h}_{m+1}$ is indeed $\cF$-measurable.
Let $\eta>0$ be such that $\P(\| Q\hat{h}_{m+1} \|_H > \eta) > 1-\eps$ and set $h_{m+1} = \phi_{\eta/2}(h_1,\ldots,h_m,\hat{h}_{m+1})$, where $\phi_{\eta/2}$ is the Borel measurable mapping from Lemma~\ref{lem:dense_in_kernel}. In particular, we have now established~\ref{it:approx_ONS_eigendecomposition} and~\ref{it:approx_ONS_Qhm+1neq0}. Finally,
we obtain $h_1^{(n)},\ldots,h_m^{(n)}$ by applying the Gram-Schmidt procedure to $(h_{m+1},\hat{h}_1^{(n)},\ldots,\hat{h}_m^{(n)})$. Note that the resulting $H$-valued functions $h_1^{(n)},\ldots,h_m^{(n)}$ are $V$-valued and $\cF$-measurable, i.e.,~\ref{it:approx_ONS_hnONS} and~\ref{it:approx_ONS_hninV} hold. Finally, Lemma~\ref{lem:GS_cont} implies that~\ref{it:approx_ONS_evectorsconverge} holds.
\end{proof}

\begin{proof}[Proof of Proposition~\ref{prop:alpha_geq_n}]
Note that~\eqref{eq:solX_alpha} implies that
\begin{equation}\label{eq:solX_alpha_measurable}
\begin{aligned}
 \langle X_t g, h\rangle_{H} 
 & = 
 \langle x_0 g, h \rangle_{H}
 + 
 \int_0^{t} 
 (\alpha \langle Q g, h \rangle_H 
 + \langle X_s A g, h \rangle_{H} 
 + \langle X_s g, Ah \rangle_H ) \,ds
 \\ & \quad 
 + \int_{0}^{t} \langle \sqrt{X}_s \,dW_s \sqrt{Q}g, h\rangle_H + \int_0^t \langle \sqrt{Q} \,dW^*_s \sqrt{X}_s g, h \rangle_H
\end{aligned}
\end{equation}
for all $t\geq 0$ and all $\cF_0$-measurable $h,g$ taking values in $D(A)$ a.s.
Set 
\begin{equation}\label{eq:def_B}
B = \{ \operatorname{rank}(X_0) = m \} \in \cF_0.
\end{equation}
By applying Lemma~\ref{lem:approx_ONS2} with $V=D(A)$ and $(\Omega,\cF,\P)=(B,\cF\cap B,\frac{\P}{\P(B)})$
we obtain $\cF_0\cap B$-measurable $(\lambda_{k})_{k=1}^{m}\colon B \rightarrow [0,\infty)$ and $\cF_0\cap B$-measurable $(h_k)_{k=1}^{m+1}$, $(h_k^{(n)})_{k=1}^{m} \colon$ $ B \rightarrow H^{m+1}$, $n\in \N$, satisfying properties~\ref{it:approx_ONS_evalueconverge}--\ref{it:approx_ONS_evectorsconverge} of that lemma. 
For notational brevity we introduce $\lambda_{m+1}:=0$, $I_n:=\{1,\ldots,n\}$ ($n\in \N$), and $h_{m+1}^{(n)}:=h_{m+1}$ ($n\in \N$). 
We now define the $\cF_0\otimes \cB(S_1(H))$-measurable mapping $g_{n}\colon \Omega \times S_1(H)\rightarrow \R$ by
\begin{align}\label{eq:geps}
 g_{n}(\omega,x) 
 & =
 \begin{cases}
 \sum_{\sigma \in \Sigma_{m+1}}
   \operatorname{sgn}(\sigma)
   \prod_{k\in I_{m+1}}
   \langle x h_k^{(n)}(\omega), h_{\sigma(k)}^{(n)}(\omega) \rangle_H,
   & \omega \in B;
   \\
   0, 
   & \omega \in \Omega \setminus B,
   \end{cases}
\end{align}
where $\Sigma_{m+1}$ is the set of all permutations of $I_{m+1}$ and $\operatorname{sgn}(\sigma)$ is the sign of the permutation (i.e., $\operatorname{sgn}(\sigma)= 1$ if $\sigma$ is even and $\operatorname{sgn}(\sigma)= -1$ if $\sigma$ is odd). 
Note that by Lemma~\ref{lem:approx_ONS2}~\ref{it:approx_ONS_hnONS} we have
\begin{equation}\label{eq:gn_is_det}
g_{n}(\omega,x) = \operatorname{det}(P_{n}^*(\omega) x P_{n}(\omega))1_{B}(\omega),
\end{equation} 
where\footnote{Warning: this is not the same $P_n$ as the $P_n$ in the proof of Lemma~\ref{lem:approx_ONS2}.} $P_{n} \colon B \rightarrow L(\R^{m+1}, H)$, $P_{n}(\omega)y = \sum_{k\in I_{m+1}} y_k h_k^{(n)}(\omega)$ for $y\in \R^{m+1}$, $\omega\in B$.

Note moreover that $g_{n}$ is twice continuously (Fr\'echet) differentiable in the second variable with 
\begin{align}\label{eq:g_deriv1}
 g_{n}'(x)(a) & = 
 \begin{cases}
 \sum_{\sigma \in \Sigma_{m+1}}
   \operatorname{sgn}(\sigma)
 \sum_{i \in I_{m+1}}
   \langle a h_i^{(n)}, h_{\sigma(i)}^{(n)} \rangle_H
   \\ \quad 
   \prod_{k\in I_{m+1}\setminus\{i\}}
   \langle x h_k^{(n)}, h_{\sigma(k)}^{(n)} \rangle_H,
   & \omega \in B;
   \\
   0, 
   & \omega \in \Omega \setminus B,
   \end{cases}
\end{align}
and  
\begin{equation}\label{eq:g_deriv2}
\begin{aligned}
 g_{n}''(x)(a,b) 
 & =  
 \begin{cases}
 \sum_{\sigma \in \Sigma_{m+1}}
 \operatorname{sgn}(\sigma)
 \sum_{i\in I_{m+1}}
 \sum_{j \in I_{m+1}\setminus\{i\}}
   \langle a h_i^{(n)}, h_{\sigma(i)}^{(n)} \rangle_H
 \\  \quad \cdot 
   \langle b h_j^{(n)}, h_{\sigma(j)}^{(n)} \rangle_H
   \prod_{k\in I_{m+1}\setminus\{i,j\}}
   \langle x h_k^{(n)}, h_{\sigma(k)}^{(n)} \rangle_H,
   & \omega \in B;
   \\
   0, 
   & \omega \in \Omega \setminus B
   \end{cases}
\end{aligned}
\end{equation}
for all $x,a,b\in L(H)$.
\par  
Identity~\eqref{eq:solX_alpha_measurable} and It\^o's formula (here we use that $h_1^{(n)},\ldots,h_{m+1}^{(n)}\in D(A)$ a.s., see Lemma~\ref{lem:approx_ONS2}~\ref{it:approx_ONS_hninV}) imply that 
\begin{align}\label{eq:defM}
 M(t):= g_{n}(X_t) - g_{n}(x_0) - \int_{0}^{t} Lg_{n}(X_s) \,ds
\end{align}
is a local martingale, where 
\begin{equation}\label{eq:Lg}
\begin{aligned}
Lg_{n}(x) 
& =
 g_{n}'(x)(\alpha Q + xA + A^*x) 
 +
 \tfrac{1}{2} \sum_{\ell,p\in \N} g_{n}''(x)
 ( H_{\ell,p}(x), H_{\ell,p}(x))\,,
\end{aligned}
\end{equation}
and $H_{\ell,p}(x) =  (\sqrt{Q} g_{\ell})\otimes (\sqrt{x} g_p)  + (\sqrt{x} g_p) \otimes (\sqrt{Q} g_{\ell}) $ ($\ell,p\in\N$), with $(g_{\ell})_{\ell\in \N}$ a (deterministic) orthonormal basis for $H$.  
Set $$\mu =  \inf\{t\geq 0: Lg_n(X_t)\geq 0\}.$$ 
Because $\operatorname{rank}(X_0) = m$ on $B$ we have (see~\eqref{eq:gn_is_det}) that $g_{n}(X_0)=0$ a.s., and as $P_n^*(\omega)X_t P_n(\omega)$ ($\omega \in B$) is positive semi-definite it follows from~\ref{eq:gn_is_det} that $g_{n}(X_t)\geq 0$ a.s.\ for all $t\geq 0$. Thus if $\P(Lg_{n}(X_0)<0)>0$, then $\P(\mu>0)>0$ and $(M_{t\wedge \mu})_{t\geq 0}$ is a positive local martingale starting in $0$ that is strictly positive on $(0,\mu)$,
which leads to a contradiction. We conclude that $Lg_{n}(X_0)\geq 0$ a.s.\par 
Now let us calculate $Lg_{n}(X_0)$.  Inserting~\eqref{eq:g_deriv1} and~\eqref{eq:g_deriv2} into~\eqref{eq:Lg} we obtain
\begin{align}\label{eq:Lg(x0)1}
& Lg_{n}(X_0)|_{B} 
 \\ 
& =    
 \sum_{\sigma \in \Sigma_{m+1}}
   \operatorname{sgn}(\sigma)
 \sum_{i \in I_{m+1}}
   \langle (\alpha Q + X_0 A + A^* X_0) h_i^{(n)}, h_{\sigma(i)}^{(n)} \rangle_H
   \prod_{k\in I_{m+1}\setminus\{i\}}
   \langle X_0 h_k^{(n)}, h_{\sigma(k)}^{(n)} \rangle_H
\notag \\ 
& \quad
+ \tfrac{1}{2}
\sum_{\ell,p\in \N} 
\sum_{\sigma\in \Sigma_{m+1}} 
\operatorname{sgn}(\sigma)
\sum_{i\in I_{m+1}} \sum_{j\in I_{m+1}\setminus \{i\}} 
   \langle H_{\ell,p}(X_0) h_i^{(n)}, h_{\sigma(i)}^{(n)}\rangle_H 
   \langle H_{\ell,p}(X_0) h_j^{(n)}, h_{\sigma(j)}^{(n)} \rangle_H
\notag \\ 
& \quad \quad \cdot 
\prod_{k\in I_{m+1}\setminus\{i,j\}}
   \langle X_0 h_k^{(n)}, h_{\sigma(k)}^{(n)} \rangle_H.
\notag
\end{align}
Recall that $h_{m+1}^{(n)}=h_{m+1}$, and that $h_{m+1}(\omega) \in \operatorname{ker}(X_0)$ whenever $\omega\in B$ by Lemma~\ref{lem:approx_ONS2}~\ref{it:approx_ONS_eigendecomposition}, 
and thus $\langle X_0 h_k, h_{\sigma(k)} \rangle_H = 0$ whenever $m+1\in \{k,\sigma(k)\}$, and note that Lemma~\ref{lem:approx_ONS2}~\ref{it:approx_ONS_eigendecomposition} also implies that
\begin{align*}
 \langle H_{\ell,p}(X_0) h_{m+1}^{(n)}, h_{m+1}^{(n)} \rangle_H
 = 0, \quad \ell, p \in \N. 
\end{align*}
These two observations imply that~\eqref{eq:Lg(x0)1} reduces to
\begin{align}\label{eq:Lg(x0)2}
& Lg_{n}(X_0)|_{B}
\\ 
& =  
 \alpha \| \sqrt{Q}h_{m+1} \|_H^2 
   \sum_{\sigma \in \Sigma_{m}}
   \operatorname{sgn}(\sigma)
   \prod_{k\in I_{m}}
   \langle X_0 h_k^{(n)}, h_{\sigma(k)}^{(n)} \rangle_H
\notag \\ 
& \quad
+ \sum_{\ell,p\in \N} \sum_{i\in I_{m}} \sum_{\substack{\sigma\in \Sigma_{m+1},\\ \sigma(i)=m+1, \sigma(m+1)=i}} 
\operatorname{sgn}(\sigma)
   \big|
    \langle H_{\ell,p}(X_0) h_i^{(n)}, h_{\sigma(i)}\rangle_H
    \big|^2
\prod_{k\in I_{m}\setminus\{i\}}
   \langle X_0 h_k^{(n)}, h_{\sigma(k)}^{(n)} \rangle_H
\notag
\\ 
& =  
 \alpha \| \sqrt{Q}h_{m+1} \|_H^2 
   \sum_{\sigma \in \Sigma_{m}}
   \operatorname{sgn}(\sigma)
   \prod_{k\in I_{m}}
   \langle X_0 h_k^{(n)}, h_{\sigma(k)}^{(n)} \rangle_H
\notag \\ 
& \quad
- \sum_{\ell,p\in \N} \sum_{i\in I_{m}} \sum_{\sigma\in \Sigma_{m},\, \sigma(i)=i} 
\operatorname{sgn}(\sigma)
   \big|
    \langle H_{\ell,p}(X_0) h_i^{(n)}, h_{m+1}\rangle_H
    \big|^2
\prod_{k\in I_{m}\setminus\{i\}}
   \langle X_0 h_k^{(n)}, h_{\sigma(k)}^{(n)} \rangle_H.
\notag
\end{align}
Next, observe that
\begin{align*}
\langle H_{\ell,n}(X_0) h_i^{(n)}, h_{m+1}\rangle_H
& =
 \langle \sqrt{Q} g_{\ell}, h_{i}^{(n)} \rangle_H
 \langle \sqrt{X_0} g_p, h_{m+1} \rangle_H
\\ 
& \quad +
 \langle \sqrt{Q} g_{\ell}, h_{m+1} \rangle_H
 \langle \sqrt{X_0} g_p, h_{i}^{(n)} \rangle_H
\\ & = 
 \langle g_{\ell}, \sqrt{Q} h_{m+1} \rangle_H
 \langle g_p, \sqrt{X_0} h_{i}^{(n)} \rangle_H
\end{align*}
whence 
\begin{align*}
\sum_{\ell,p\in \N} 
 \big|  \langle H_{\ell,p}(X_0) h_i^{(n)}, h_{m+1}\rangle_H \big|^2
 = \| \sqrt{Q} h_{m+1} \|_H^2 \| X_0 h_i^{(n)} \|_H^2.
\end{align*}
In conclusion, we obtain 
\begin{align*}
Lg_{n}(X_0)|_{B}
& =  
 \alpha \| \sqrt{Q}h_{m+1} \|_H^2 
   \sum_{\sigma \in \Sigma_{m}}
   \operatorname{sgn}(\sigma)
   \prod_{k\in I_{m}}
   \langle X_0 h_k^{(n)}, h_{\sigma(k)}^{(n)} \rangle_H
\notag \\ 
& \quad
- \| \sqrt{Q} h_{m+1} \|_H^2 
\sum_{i\in I_{m}} \sum_{\sigma\in \Sigma_{m},\, \sigma(i)=i} 
\operatorname{sgn}(\sigma)
   \| X_0 h_i^{(n)} \|_H^2
\prod_{k\in I_{m}\setminus\{i\}}
   \langle X_0 h_k^{(n)}, h_{\sigma(k)}^{(n)} \rangle_H.
\notag
\end{align*}
It follows from Lemma~\ref{lem:approx_ONS2}~\ref{it:approx_ONS_eigendecomposition} and \ref{it:approx_ONS_evectorsconverge} that
\begin{align}\label{eq:Lgnlim}
& \lim_{n\rightarrow \infty} L g_n(X_0)|_{B}=  
 (\alpha - m) \| \sqrt{Q}h_{m+1} \|_H^2
   \prod_{k\in I_{m}} \lambda_k.
\end{align}
As $Lg_{n}(X_0)\geq 0$ a.s.\ for all $n\in \N$, $\prod_{k \in I_m} \lambda_k >0$ a.s., and
$\P(\| \sqrt{Q} h_{m+1} \|_H^2 >0 \cap B)>0$ (see Lemma~\ref{lem:approx_ONS2}~\ref{it:approx_ONS_Qhm+1neq0}), it follows 
that $\alpha\geq m$. \par 
Finally, assume that~\eqref{eq:Xstaysofrankm} holds, i.e., we have $\P(\tau>0)>0$, where 
\begin{equation*}
\tau(\omega) =
\begin{cases}
\inf\{t\geq 0\colon \operatorname{rank}(X_t(\omega))>m\},&
\omega \in B;
\\
0,& \omega \in \Omega \setminus B.
\end{cases}
\end{equation*}
Note that $\tau$ is a stopping time ($\operatorname{rank}(\cdot)$ is lower semi-continuous).
Thus $(M_{t\wedge \tau})_{t\geq 0}$ is a local martingale. Note that~\eqref{eq:gn_is_det} implies that $g_{n}(X_{t\wedge\tau})=0$ for all $t\geq 0$. Hence we can conclude from~\eqref{eq:defM} that $Lg_{n}(X_0)=0$, whence~\eqref{eq:Lgnlim} implies that $m=\alpha$. 
\end{proof}

\begin{remark}
The following conditions on the $H$-valued functions $(h_k^{(n)})_{k=1}^{m+1}$ defining $g_m$, see~\eqref{eq:geps}, are the reason the proof of Proposition~\ref{prop:alpha_geq_n} is so technical:
\begin{enumerate}
\item $(h_k^{(n)})_{k=1}^{m}$ must approximate a system of eigenvectors of $X_0$ as $n\rightarrow \infty$ in order to obtain~\eqref{eq:Lgnlim};
\item $(h_k^{(n)})_{k=1}^{m+1}$ must be $\cF_0$-measurable and $D(A)$-valued to ensure that we can apply~\eqref{eq:solX_alpha_measurable};
\item $(h_k^{(n)})_{k=1}^{m+1}$ must be an orthonormal system in $H$ to ensure that $g_n$ in~\eqref{eq:geps} is a determinant (which is needed to conclude that $g_n(X_t)\geq 0$);
\item we need $X_0 h_{m+1}^{(n)} = 0$ to ensure that the terms involving $A$ in~\eqref{eq:Lg(x0)1} vanish \emph{before} taking $n\rightarrow \infty$ (otherwise we have a problem when taking the limit as $A$ is unbounded);
\item we need that $Q h_{m+1} \neq 0$ on a set of positive measure to be able to draw the desired conclusions at the end of the proof.
\end{enumerate}
\end{remark}

\bibliographystyle{abbrv}
\bibliography{biblioversieCCK}
\end{document}